\documentclass[11pt]{article}
\usepackage{amsmath, amssymb}
\usepackage{amsthm} %
\usepackage{enumerate}  %
 \usepackage{hyperref}
\usepackage[capitalize]{cleveref}
\usepackage{url} %
\usepackage{makeidx} %
\usepackage{tikz}
\usetikzlibrary{decorations.markings}
\usetikzlibrary{shapes.geometric, positioning}
\usepackage{array}
\usepackage{wrapfig}
\usepackage{nicefrac}
\usepackage{xspace}
\usepackage{fullpage}
\usepackage[dvipsnames]{xcolor}
\usepackage{graphicx}

\usepackage{pgfplots}
\pgfplotsset{compat=1.18}

\usepackage[nottoc]{tocbibind}

  \usepackage[numbers,sort&compress]{natbib}

\newtheoremstyle{plainsl}%
	{\topsep}
	{\topsep}
	{\slshape} %
	{}
	{\normalfont\bfseries}
	{.}
	{ }
	{}

{\theoremstyle{plainsl}
\newtheorem{theorem}{Theorem}[section]
\newtheorem{lemma}[theorem]{Lemma}
\newtheorem{corollary}[theorem]{Corollary}
\newtheorem{proposition}[theorem]{Proposition}

\newtheorem{openproblem}[theorem]{Open Problem}}
{\theoremstyle{remark}
\newtheorem{example}[theorem]{Example}}
\newtheorem{remark}[theorem]{Remark}
\usepackage{longtable}

\definecolor{colour1}{RGB}{230,97,0}
\definecolor{colour2}{RGB}{93,58,155}
\definecolor{colour3}{RGB}{0,102,102}

\newcommand\Zv{{\mathbf v}}

\begin{document}
\title{Sustainable trees}
\author{Ferenc Bencs\thanks{Centrum Wiskunde \& Informatica, Amsterdam, The Netherlands. Supported by the Netherlands Organisation of Scientific Research (NWO): VI.Veni.222.303., Email: \texttt{f.bencs@gmail.com}} \and Maarten van Dijk\thanks{Email: \texttt{m.z.j.vandijk00@gmail.com}} \and  Krystal Guo\thanks{Korteweg-de Vries
  Institute for Mathematics and QuSoft, University of Amsterdam, Amsterdam,
  The Netherlands. Email: \texttt{k.guo@uva.nl}} }
\date{October 12, 2026}
\maketitle

\begin{abstract}
 Let $X$ be a connected graph and let $\lambda_1(X)$ denote the spectral radius of its adjacency matrix. An edge $e$ of $X$ is \emph{sustainable} if it can be
  deleted and replaced by a non-edge so that the resulting graph is connected
  and
  its spectral radius does not decrease; $X$ is \emph{sustainable} if every edge
  is. We study this edgewise notion of spectral robustness for trees, where each
  replacement is an elementary exchange between trees on the same vertex set. We
  establish basic properties of sustainability and exhibit infinite families of
  sustainable and non-sustainable trees. Our main results concern trees with a
  vertex of large degree: if $T$ is a tree on $n$ vertices with
  $\Delta(T)\geq n/2$, then every vertex of maximum degree is a principal
  vertex,
  that is, a vertex at which the Perron vector is maximal. Building on this, we
  prove that every tree with $\Delta(T)\geq n/2+1$ is non-sustainable, and we
  classify the sustainable trees in the boundary case $\Delta(T)=n/2$: for
  $n\geq 4$, the only such trees are the double broom $B(2,n/2,n/2-1)$ and one
  exceptional tree on eight vertices. We conclude with open problems on the
  generalized tree-shift order and the asymptotic proportion of sustainable
  trees.

\medskip
\noindent\textbf{Keywords:} spectral radius, trees, Perron vector, principal vertex.

\medskip
\noindent\textbf{MSC2020:} Primary 05C50; Secondary 05C05, 15A18.
\end{abstract}

\section{Introduction}

Let $X$ be a connected graph, and let $\lambda_1(X)$ denote the spectral radius of its adjacency matrix. For an edge $e\in E(X)$, consider the operation of deleting $e$ and inserting a non-edge $f\in E(\overline X)$, subject to the requirement that the resulting graph remains connected. We say that the edge $e$ is\emph{sustainable} if at least one such replacement satisfies
\[
    \lambda_1(X-e+f)\geq \lambda_1(X),
\]
and we call $X$ \emph{sustainable} if every edge of $X$ is sustainable. Thus a non-sustainable graph contains a spectrally indispensable edge: an edge for which every admissible one-edge replacement strictly decreases the spectral radius.

This definition is intended as an edgewise notion of spectral robustness. Classical extremal questions ask which graphs in a prescribed class maximize or minimize the spectral radius. Sustainability asks a different, local question: starting from a given graph, can each individual edge be exchanged for another edge without making the spectral objective worse?

Trees provide a particularly natural setting for this question. If $T$ is a tree and $e\in E(T)$, then $T-e$ has exactly two components, and every admissible replacement joins a vertex of one component to a vertex of the other. Hence an edge replacement is precisely an elementary exchange between trees on the same vertex set. In this sense, sustainability studies the local geometry of the spectral radius on the exchange graph of trees.

The classical extremal trees already illustrate the two possible behaviours. Lov\'asz and Pelik\'an proved that, among trees of a fixed order, the path uniquely minimizes the adjacency spectral radius and the star uniquely maximizes it \cite{lovasz1973eigenvalues}; see also \cite{brouwer12,cvetkovic09}. It follows that $P_n$ is sustainable for $n\geq 3$: after deleting any edge, one can reconnect the two components to obtain another tree, whose spectral radius cannot be smaller than that of the path. On the other hand, $K_{1,n-1}$ is non-sustainable for $n\geq 4$, because replacing any leaf edge produces a non-isomorphic tree and therefore strictly decreases the spectral radius. Thus, sustainability is compatible with the familiar path–star extremal picture.

There is also a natural connection with Csikvári’s partial order on trees \cite{Csikvari2010}. This order is generated by generalized tree shifts, with the path as its minimum element and the star as its maximum element, and every nontrivial shift increases the spectral radius. Since these two extremal elements are respectively sustainable and non-sustainable, it is tempting to regard sustainability as measuring how far a tree has progressed from the path toward the star. The examples in \ref{fig:Non upperset}  show that this interpretation is only heuristic: sustainability is not monotone under any arbitrary generalized tree shifts.

The Perron vector reveals more precisely where an obstruction to sustainability can occur. By the Perron–Frobenius theorem, a connected graph has a unique positive unit eigenvector $\Zv$ associated with $\lambda_1$. We call a vertex \emph{principal} if its coordinate in $\Zv$ is maximal. The extreme entries and localization of principal eigenvectors have been studied extensively; see, for example,  \cite{PAPENDIECK2000129,cioabua2007principal,Das02012025}.
The edge-rotation lemma of \cite{Rowlinson} implies that every edge not incident with a principal vertex is sustainable: after deleting such an edge, the separated component can be reattached at a principal vertex, strictly increasing the spectral radius. Thus possible obstructions to sustainability are concentrated near the principal vertices. This observation leads naturally to the relation between the principal vertices and vertices of large degree, which is one of the main statement of the paper.

Our principal results are summarized below. The double broom $B(\ell,d_1,d_2)$ is defined in \cref{sec:preliminaries}, and $X_8$ denotes the tree obtained from disjoint copies of $K_{1,3}$ and $P_4$ by joining the center of $K_{1,3}$ to an internal vertex of $P_4$.

\begin{theorem}\label{thm:introduction-main-results}
Let $T$ be a tree on $n$ vertices.
\begin{enumerate}[(a)]
    \item If $\Delta(T)\geq n/2$, then every vertex of maximum degree is a principal vertex of $T$.
    \item If $\Delta(T)\geq n/2+1$, then $T$ is non-sustainable.
    \item Suppose that $n\geq 4$ and $\Delta(T)=n/2$. Then $T$ is sustainable if and only if
    \[
        T\cong B\!\left(2,\frac n2,\frac n2-1\right),
    \]
    or $n=8$ and $T\cong X_8$.
\end{enumerate}
\end{theorem}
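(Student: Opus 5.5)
The plan is to prove (a) first, derive (b) from (a), and then treat the boundary case (c) by a structural case analysis.

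\emph{Part (a).} Let $v$ be a vertex of maximum degree $\Delta\ge n/2$, let $\Zv$ be the Perron vector and $\lambda=\lambda_1(T)$. Suppose some $u\neq v$ has $\Zv_u>\Zv_v$. Then $\lambda \Zv_u=\sum_{w\sim u}\Zv_w$ and $\lambda \Zv_v=\sum_{w\sim v}\Zv_w$. Since $T$ is a tree, $u$ and $v$ have at most one common neighbour. Hence $N(v)\setminus N[u]$ contains at least $\Delta-2$ vertices, and these lie outside $N(u)$.

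I would use $\lambda\ge\sqrt{\Delta}$ and the second-order identity
\[
\lambda^2\Zv_x=\deg(x)\,\Zv_x+\sum_{y:\,d(x,y)=2}c_{xy}\Zv_y,
\]
where in a tree each $c_{xy}=1$. Comparing $x=u$ with $x=v$ gives
\[
(\lambda^2-\Delta)\Zv_v\ge \dots
\]
The key is that the vertices at distance $2$ from $u$ and the vertices at distance $2$ from $v$ together fit into the at most $n-\Delta-1\le \Delta-1$ vertices outside $N[v]$, together with the leaves of $v$. A counting argument bounding $\sum_{d(u,y)=2}\Zv_y$ by $(n-\deg u-1)\max\Zv$ should force $\Zv_u\le\Zv_v$, with strictness when $\deg u<\Delta$.

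I expect this inequality to be the main obstacle. I would first try the case where $u$ is adjacent to $v$, then reduce the nonadjacent case to it by showing that the maximum entry lies on the path from $v$, using monotonicity of $\Zv$ along paths away from a heavy vertex.

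\emph{Part (b).} Let $\Delta\ge n/2+1$, so $v$ is principal by (a). Consider an edge $e=vw$ where $w$ is a leaf; such a leaf exists, since at most $n-\Delta-1<\Delta$ neighbours of $v$ can be non-leaves. Every admissible replacement attaches $w$ to some vertex $x\ne v$. By the Rayleigh quotient, this replacement changes $\Zv^{\top}A\Zv$ by $2\Zv_w(\Zv_x-\Zv_v)\le 0$. Equality would require $\Zv$ to remain an eigenvector, which fails. The tie case where $x$ is another principal vertex must be excluded separately. Here I would use the slack $\Delta\ge n/2+1$ to show that $v$ is the unique principal vertex. The natural route is the strict inequality in (a) whenever $\deg u\le n-\Delta<\Delta$, which gives $\Zv_x<\Zv_v$. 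This shows the leaf edge $vw$ is non-sustainable, so $T$ is non-sustainable.

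\emph{Part (c).} Now $\Delta=n/2$, and the vertex $v$ is still principal by (a). The same leaf argument applies unless $v$ has no leaf, or there exists a second vertex $u$ with $\Zv_u=\Zv_v$, or every leaf edge at $v$ is sustainable for some other reason. Counting vertices shows that a second principal vertex $u$ can only occur when $\deg u=n/2$ as well, which forces the double-broom-like structure $B(2,n/2,n/2-1)$. In that tree I would verify directly that every edge is sustainable, using the symmetric swap between the two centres.

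Otherwise $v$ is the unique principal vertex. If $v$ has a leaf, then $T$ is non-sustainable. So the remaining case is where all $n/2$ neighbours of $v$ are non-leaves, using $n/2-1$ further vertices. This forces the neighbourhood of $v$ to be a near-perfect matching onto pendant vertices, with one extra vertex left over. Such trees form a small explicit family. For each, I would check the edges at $v$ by comparing characteristic polynomials, and this should isolate $X_8$ as the only survivor.
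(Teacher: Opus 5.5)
Your argument for (b) and (c) has a genuine gap: it runs the Rayleigh quotient in the wrong direction. With $\Zv$ the unit Perron vector of $T$ you correctly get $\Zv^{\top}A(T')\Zv=\lambda_1(T)-2\Zv(w)\bigl(\Zv(v)-\Zv(x)\bigr)\le\lambda_1(T)$. But the Rayleigh principle only gives $\lambda_1(T')\ge\Zv^{\top}A(T')\Zv$, which is a \emph{lower} bound on $\lambda_1(T')$. So a nonpositive first-order change, and your discussion of when it vanishes, say nothing about whether $\lambda_1(T')<\lambda_1(T)$. The inference behind your argument, ``$v$ is the unique principal vertex and $vw$ is a leaf edge, so every replacement of $vw$ lowers $\lambda_1$'', is false. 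In $B(2,k,k-1)$ with $k\ge3$, the degree-$k$ centre $v$ is the unique principal vertex and $\Zv(v)>\Zv(x)$ for the other centre $x$. Yet moving a leaf of $v$ to $x$ gives an isomorphic tree.

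An upper bound needs the Perron vector $\Zv'$ of $T'$. If $\Zv'(v)\ge\Zv'(x)$, the edge-rotation lemma applied to $T'$ (rotating $xw$ back to $vw$) gives $\lambda_1(T)>\lambda_1(T')$. This is the paper's argument for (b), and it is where $\Delta\ge n/2+1$ is really used. Every $x\ne v$ has $\deg_T(x)\le n-\Delta\le n/2-1$, so $v$ is still a vertex of maximum degree $\Delta-1\ge n/2$ in $T'$ (possibly tied with $x$, which (a) permits). Then (a) applied to $T'$, not to $T$, makes $v$ principal there. With this repair, (b) follows from (a).

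For (c), the same inference gives the wrong classification. It would make every tree with $\Delta=n/2$ and a unique principal vertex non-sustainable. But $B(2,n/2,n/2-1)$ for $n\ge6$ and $X_8$ are of exactly this kind and are sustainable. Your structural claims also fail:
\begin{itemize}
\item Two vertices of degree $n/2$ force all other vertices to be leaves, so $T\cong B(1,n/2,n/2)$, which is non-sustainable for $n\ge6$; it is not $B(2,n/2,n/2-1)$.
\item The case where $v$ has no leaf is empty: $n/2$ non-leaf neighbours would need $n/2$ further vertices, while only $n/2-1$ exist.
\end{itemize}

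What is missing is a way to control $T'$ when moving a leaf off $v$ lowers $\deg(v)$ to $n/2-1$. That is below the threshold of (a), so the principal vertex $p$ of $T'$ may differ from $v$. The paper uses a quantitative form of (a): if $\deg(v)\ge(n-k+1)/2$, then every principal vertex lies within distance $k$ of $v$, with equality only for a balanced double broom $B(k,d,d)$. It applies this to $T'$ with $k=3$ and then does a case analysis on $d_{T'}(v,p)\in\{1,2,3\}$. In that analysis, Perron ratios are bounded by $\phi_p(Y,t)/\phi_{vp}(Y,t)$ for suitable subtrees $Y$, this ratio is increasing in $t$, and an explicit computation for double brooms with a pendant star decides the comparison. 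What remains is the exceptional family $\mathcal F$, whose members are settled one by one, with $n\le12$ checked by computer.

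Finally, your part (a) is only an outline. The decisive inequality is left as ``should force'', and ``monotonicity of $\Zv$ along paths away from a heavy vertex'' is asserted without proof. The paper proves the distance statement differently. It moves every vertex that is off the $u$--$v$ path and outside $N(u)\cup N(v)$ onto the putative principal vertex $u$; this generalized edge rotation does not decrease $\lambda_1$ and produces a double broom with centres $u,v$. It then compares $\Zv(u)/\Zv(v)$ with the same ratio in that double broom, using the same kind of subtree ratio.
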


The threshold in part~(b) is sharp: the family $B(2,n/2,n/2-1)$ is sustainable. Part~(c) shows that actually this is the only tree that could be a sustainable if $\Delta(T)=n/2$, apart from one additional tree of order eight. The proof requires a more delicate analysis because, after a leaf incident with a maximum-degree vertex is moved, the principal vertex of the new tree may change.

The paper is organized as follows. In \cref{sec:preliminaries} we collect the spectral tools used throughout, including edge rotations, edge contraction, and generalized tree shifts. In \cref{sec:sustainability} we establish basic properties and give several infinite families of sustainable and non-sustainable trees. We then study the location of principal vertices in trees with a high-degree vertex and prove part~(a) of \cref{thm:introduction-main-results}. The large-degree non-sustainability theorem and the classification in the boundary case are proved in the following section. We conclude with questions concerning the generalized tree-shift order, the asymptotic proportion of sustainable trees, and rooted attachment problems.

\section{Preliminaries}\label{sec:preliminaries}

\paragraph{Graphs and eigenvalues.}
We briefly recall the notation and spectral graph theory tools used throughout the paper; for background, see \cite{brouwer12,Godsil93}. Let $X$ be a finite simple graph, and let $A(X)$ be its adjacency matrix. If $X$ is connected, we denote the spectral radius of $A(X)$ by $\lambda_1(X)$. When the graph is clear from context, we write $A$ and $\lambda_1$.

For a vertex $u\in V(X)$, let $N(u)$ denote the neighbourhood of $u$. Additionally, let $N[u]=N(u)\cup\{u\}$ denote the closed neighbourhood of $u$. By the Perron--Frobenius theorem, if $X$ is connected, then $A(X)$ has a unique positive eigenvector corresponding to $\lambda_1(X)$. We call this vector the \emph{Perron vector} of $X$ and denote it by $\Zv$. A vertex $u\in V(X)$ is called a \emph{principal vertex} if
\[
    \Zv(u)=\max_{v\in V(X)}\Zv(v).
\]

For vertices $u,v\in V(X)$, we write $d(u,v)$ for their distance and $\deg(u)$ for the degree of $u$. Let $W_{uv}(X)$ denote the set of all walks in $X$ starting at $u$ and ending at $v$; when $u=v$, we write $W_u(X)$. Finally, let
\[
    \phi(X,t)=\det(tI-A(X))
\]
denote the characteristic polynomial of $X$.
\begin{proposition}\cite[Proposition 5.1.1]{brouwer12}\label{prop:charpoly-tree-recurrence}
Let $T$ be a tree and let $u\in V(T)$. Then
\[
    \phi(T,t)= t\phi(T-u,t) - \sum_{v\in N(u)}\phi(T-\{u,v\},t).
\]
\end{proposition}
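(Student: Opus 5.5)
The plan is to derive the recurrence from the Laplace (cofactor) expansion of $\det(tI-A(T))$ along the row indexed by $u$, and then use the tree structure to simplify the cofactors. Order the vertices so that $u$ comes first. The diagonal entry in row $u$ is $t$, and the entry in column $v$ is $-1$ exactly when $v\in N(u)$. The diagonal term therefore contributes $t\det(tI-A(T-u))=t\phi(T-u,t)$.

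For the off-diagonal terms I would work directly with the permutation expansion
\[
\det(tI-A)=\sum_{\sigma}\operatorname{sgn}(\sigma)\prod_{w}(tI-A)_{w,\sigma(w)}.
\]
A permutation gives a nonzero term only if each $w$ is sent either to itself or to a neighbour. So every nonfixed cycle of $\sigma$ traces a closed walk along distinct vertices in $T$. Since $T$ is acyclic, such cycles can only be transpositions along edges.

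Now split the permutations according to what happens at $u$.
\begin{itemize}
\item If $\sigma(u)=u$, the terms sum to $t\phi(T-u,t)$.
\item Otherwise $\sigma$ contains a transposition $(u\,v)$ for a unique $v\in N(u)$. This transposition contributes $\operatorname{sgn}=-1$ and the factor $(-1)(-1)=1$. The restriction of $\sigma$ to $V\setminus\{u,v\}$ is then an arbitrary permutation supported on the tree-forest $T-\{u,v\}$, and these terms sum to $-\phi(T-\{u,v\},t)$.
\end{itemize}
Summing over $v\in N(u)$ gives the formula. Here I use that the characteristic polynomial of a forest is the product of those of its components, which is automatic from the same determinant expansion.

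The only point needing care is the acyclicity argument that excludes longer cycles of $\sigma$. A cycle $(w_1\,w_2\,\dots\,w_k)$ with $k\ge 3$ in which each $w_iw_{i+1}$ is an edge would give a cycle in $T$. Since $T$ is a tree, no such cycle exists. Everything else is routine bookkeeping of signs.
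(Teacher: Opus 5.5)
Your proof is correct: because $T$ is acyclic, the only permutations contributing to $\det(tI-A(T))$ are products of fixed points and transpositions along edges. Splitting on $\sigma(u)$ then gives $t\phi(T-u,t)$ and, for each $v\in N(u)$, the term $-\phi(T-\{u,v\},t)$ as the corresponding principal minors. This is the standard permutation-expansion argument; the paper gives no proof of its own and simply cites Brouwer--Haemers. Your appeal to multiplicativity over components is unnecessary, since $\phi(T-\{u,v\},t)$ is by definition exactly the principal minor you obtain, and the same argument proves the identity for any forest.
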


Denote the set of all paths from $u$ to $v$ in $X$ by $\mathcal{P}_{uv}(X)$. If $P\in \mathcal{P}_{uv}(X)$, then $X\setminus P$ denotes the graph obtained from $X$ by deleting all vertices of $P$.

\begin{proposition}\cite[Section 4.2]{Godsil93}\label{prop:walk-generating-function}
Let $X$ be a graph and let $u,v\in V(X)$. For every $t>\lambda_1(X)$,
\[
    \sum_{P\in \mathcal{P}_{uv}(X)}
    \frac{\phi(X\setminus P,t)}{\phi(X,t)}
    =
    t^{-1}\sum_{\gamma\in W_{uv}(X)} t^{-|\gamma|},
\]
where $|\gamma|$ denotes the length of the walk $\gamma$.
\end{proposition}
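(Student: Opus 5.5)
The plan is to identify both sides with the $(u,v)$ entry of the resolvent $(tI-A)^{-1}$, where $A=A(X)$. For the right-hand side, group walks by length. The number of walks of length $k$ from $u$ to $v$ is $(A^k)_{uv}$, so
\[
t^{-1}\sum_{\gamma\in W_{uv}(X)} t^{-|\gamma|}=\sum_{k\ge 0} t^{-k-1}(A^k)_{uv}=\Bigl(t^{-1}\sum_{k\ge0}(t^{-1}A)^k\Bigr)_{uv}.
\]
Since $A$ is symmetric with spectral radius $\lambda_1(X)$, the Neumann series converges for $t>\lambda_1(X)$, and it sums to $(tI-A)^{-1}$. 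In particular $tI-A$ is invertible there, so $\phi(X,t)\neq 0$. Since all terms are nonnegative, the rearrangement into lengths is harmless.

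For the left-hand side, Cramer's rule gives $\bigl((tI-A)^{-1}\bigr)_{uv}=\operatorname{adj}(tI-A)_{uv}/\phi(X,t)$. It therefore suffices to prove the purely combinatorial identity
\[
\operatorname{adj}(tI-A)_{uv}=\sum_{P\in\mathcal P_{uv}(X)}\phi(X\setminus P,t).
\]
If $u=v$, the adjugate entry is the principal minor obtained by deleting row and column $u$, namely $\phi(X-u,t)$. This agrees with the sum, which has the single path $P=(u)$.

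For $u\neq v$, write $M=tI-A$. The cofactor $\operatorname{adj}(M)_{uv}$ equals the coefficient of $s$ in $\det(M+sE_{vu})$. Equivalently, it is the signed sum over permutations $\sigma$ of $V(X)$ with $\sigma(v)=u$ of $\operatorname{sgn}(\sigma)\prod_{w\neq v}M_{w,\sigma(w)}$. The steps are as follows.

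\begin{enumerate}[(i)]
\item Let $C$ be the cycle of $\sigma$ through $v$. Its remaining steps $u=w_0\to w_1\to\cdots\to w_k=v$ use off-diagonal entries $M_{w_i w_{i+1}}=-1$, which are nonzero only along edges. Because the $w_i$ are distinct, this traces a path $P\in\mathcal P_{uv}(X)$.
\item The restriction of $\sigma$ to $V(X)\setminus V(P)$ is an arbitrary permutation. Summing over these gives exactly $\det\bigl(tI-A(X\setminus P)\bigr)=\phi(X\setminus P,t)$.
\item The cycle $C$ has length $k+1$, so it contributes sign $(-1)^k$. The $k$ path entries contribute $(-1)^k$. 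The net factor is $+1$, so each path contributes $+\phi(X\setminus P,t)$.
\end{enumerate}

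The main obstacle is the sign and index bookkeeping in this last part. One has to check that the transpose and cofactor convention matches the orientation of the path. This is harmless here, because $A$ is symmetric and reversing $P$ gives the same vertex set. One also has to confirm that no cancellation occurs. Everything else is routine.
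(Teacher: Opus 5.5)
Your proof is correct. The Neumann series identifies the right-hand side with $\bigl((tI-A)^{-1}\bigr)_{uv}$ for $t>\lambda_1(X)$. Your Leibniz-expansion argument then proves $\operatorname{adj}(tI-A)_{uv}=\sum_{P\in\mathcal P_{uv}(X)}\phi(X\setminus P,t)$: the cycle of $\sigma$ through $v$ traces a $u$--$v$ path, the rest of $\sigma$ produces $\phi(X\setminus P,t)$, and the two factors $(-1)^k$ cancel. You also handle the trivial-path case $u=v$, which is the case the paper actually uses in the proof of \cref{lemma:ratio bound}.

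The paper gives no proof of its own; it only cites Godsil. In the paper's framework the statement is the definition $\phi_{uv}/\phi=(tI-A)^{-1}_{uv}$ combined with \cref{lem: cofactor characterstic}. Your structure matches this, and what you add is a self-contained proof of that lemma.

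A genuinely different route is walk-theoretic:
\begin{enumerate}[(i)]
\item Prove only the diagonal identity $\phi(Y-w,t)/\phi(Y,t)=t^{-1}\sum_{\gamma\in W_w(Y)}t^{-|\gamma|}$, by Cramer's rule plus the Neumann series.
\item Split each $u$--$v$ walk at last exits into a path $P=(u=v_0,\dots,v_\ell=v)$ and a closed walk at each $v_k$ inside $X-\{v_0,\dots,v_{k-1}\}$.
\item The total weight of the walks with underlying path $P$ then telescopes to $t\,\phi(X\setminus P,t)/\phi(X,t)$.
\end{enumerate}
That route yields \cref{lem: cofactor characterstic} as a corollary rather than proving it directly. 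It is also the same telescoping the paper performs in the proof of \cref{lemma:ratio bound}. Yours is shorter and purely algebraic.

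One phrase is worth adding. The paper applies the statement to possibly disconnected graphs such as $X-P_{k-1}$, so $\lambda_1(X)$ must be read as the largest adjacency eigenvalue. It equals the spectral radius of $A$ because $A$ is nonnegative (Perron--Frobenius), not merely because $A$ is symmetric, and that is what your convergence claim needs.
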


Let $X$ be a graph with adjacency matrix $A$. For $u,v\in V(X)$, define $\phi_{uv}(X,t)$ by
\[
    \phi_{uv}(X,t) = \operatorname{adj}(tI-A)_{uv}.
\]
Equivalently,
\[
    \frac{\phi_{uv}(X,t)}{\phi(X,t)}     =     (tI-A)^{-1}_{uv}
\]
whenever $tI-A$ is invertible. When $u=v$, we write $\phi_u(X,t)$ for $\phi_{uu}(X,t)$.

\begin{lemma}\cite[Lemma~4.2.1]{Godsil93}\label{lem: cofactor characterstic}
    Let $u$ and $v$ be vertices of a graph $X$. Then
    \[
        \phi_{uv}(X,t)=\sum_{P\in \mathcal{P}_{uv}(X)}\phi(X\setminus P, t).
    \]
\end{lemma}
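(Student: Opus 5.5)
The plan is to prove the identity directly from the permutation expansion of the determinant, reading off the adjugate entry as a sum over those permutations that send a fixed index to another one. Write $M=tI-A$. By definition, $\operatorname{adj}(M)_{uv}$ is the cofactor of the entry $M_{vu}$. Since $\det M$ is affine-linear in each single entry, this cofactor equals the coefficient of $M_{vu}$ in $\det M$:
\[
\phi_{uv}(X,t)=\frac{\partial \det M}{\partial M_{vu}}=\sum_{\sigma:\ \sigma(v)=u}\operatorname{sgn}(\sigma)\prod_{w\neq v} M_{w,\sigma(w)} .
\]
Here the sum runs over all permutations $\sigma$ of $V(X)$ with $\sigma(v)=u$. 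If $u=v$ this is the usual statement that the diagonal cofactor is $\det(tI-A(X-u))=\phi(X-u,t)$, which matches the right-hand side because the only path from $u$ to $u$ is the trivial one. So the real work is the case $u\neq v$.

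For $u\neq v$, I will decompose each such $\sigma$ into its cycles and isolate the cycle through $v$. This cycle has the form $v\mapsto u=w_0\mapsto w_1\mapsto\cdots\mapsto w_k=v$ with all $w_i$ distinct. The product $\prod_{w\neq v}M_{w,\sigma(w)}$ contains the factors $M_{w_i,w_{i+1}}$ for $0\le i<k$. These are off-diagonal entries equal to $-A_{w_iw_{i+1}}$, so the term vanishes unless each $w_iw_{i+1}$ is an edge. Hence the surviving cycles through $v$ correspond bijectively to paths $P=(w_0,\dots,w_k)\in\mathcal P_{uv}(X)$.

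Next I will track the sign. The cycle has length $k+1$, so its sign is $(-1)^k$, and the $k$ path edges contribute $(-1)^k$. The total contribution of the path part is therefore $+1$. The rest of $\sigma$ is an arbitrary permutation of $V(X)\setminus V(P)$, and the factor of $\operatorname{sgn}(\sigma)$ coming from those cycles is exactly its sign. Summing over these permutations therefore gives $\det\big(tI-A(X\setminus P)\big)=\phi(X\setminus P,t)$, with the convention that the empty graph has characteristic polynomial $1$. Summing over all paths $P$ gives the claimed formula.

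The only delicate point is the sign bookkeeping: first, that the cofactor of $M_{vu}$ (rather than of $M_{uv}$) is the one that appears in $\operatorname{adj}(M)_{uv}$; second, that the path signs cancel to exactly $+1$. Because $A$ is symmetric, the first issue is harmless, and $\operatorname{adj}(M)_{uv}=\operatorname{adj}(M)_{vu}$ anyway. I will check the second issue on the single edge $X=K_2$. There $\operatorname{adj}(tI-A)_{12}=1$, and the only path is the edge itself, whose removal leaves the empty graph with $\phi=1$, so the formula checks.
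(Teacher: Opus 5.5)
The paper does not prove this lemma; it quotes it from Godsil, so there is no in-paper argument to compare yours with. Your determinant-expansion proof is correct and complete. The three key steps all check out:
\begin{enumerate}[(i)]
    \item $\operatorname{adj}(M)_{uv}$ is the $(v,u)$ cofactor, which equals $\sum_{\sigma(v)=u}\operatorname{sgn}(\sigma)\prod_{w\neq v}M_{w,\sigma(w)}$.
    \item The cycle of $\sigma$ through $v$, of length $k+1$, contributes $(-1)^k\cdot(-1)^k\prod_{i<k}A_{w_iw_{i+1}}$. This is $1$ exactly when $(w_0,\dots,w_k)$ is a $u$--$v$ path, and $0$ otherwise.
    \item The remaining cycles range over all permutations of $V(X)\setminus V(P)$. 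They sum to $\det\bigl(tI-A(X\setminus P)\bigr)$, because $X\setminus P$ is an induced subgraph.
\end{enumerate}

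The route closest to the paper's own toolkit would be different. For $t>\lambda_1(X)$, combine the Neumann series $(tI-A)^{-1}_{uv}=t^{-1}\sum_{\gamma\in W_{uv}(X)}t^{-|\gamma|}$ with \cref{prop:walk-generating-function}. This gives the identity on an infinite set of values of $t$, and hence as polynomials. However, \cref{prop:walk-generating-function} is just the lemma divided by $\phi(X,t)$, so that route only shifts the work onto the path decomposition of walks. Your argument avoids this entirely. It is purely algebraic, holds for every $t$ at once, and needs no connectivity or convergence hypotheses.
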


The following is \cite[Lemma~4.2.3]{Godsil93} with the added hypothesis that $\phi_{11}(X,\theta)\neq 0$, which is implicitly assumed in Godsil's proof.

\begin{lemma}\cite[Lemma~4.2.3]{Godsil93}\label{lem: vector relation to characterstic polynomial}
    Let $X$ be a  graph on $n$ vertices with adjacency matrix $A$ and let $\theta$ be an eigenvalue with multiplicity 1. Let $\Zv$ be the vector with $i$-th entry equal to the value $\phi_{1i}(X,\theta)$. If $\phi_{11}(X,\theta)\neq 0$, then $\Zv\neq 0$ and $A\Zv=\theta\Zv$.

    In particular, if $X$ is connected and $\theta=\lambda_1(X)$, then the corresponding $\Zv$ is the Perron vector.
\end{lemma}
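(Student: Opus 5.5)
The plan is to work directly with the adjugate identity rather than with walks or paths. For every $t$ we have
\[
    \operatorname{adj}(tI-A)\,(tI-A)=(tI-A)\,\operatorname{adj}(tI-A)=\phi(X,t)\,I .
\]
Setting $t=\theta$ and using $\phi(X,\theta)=0$ gives $(\theta I-A)\operatorname{adj}(\theta I-A)=0$. Hence every column of $\operatorname{adj}(\theta I-A)$ lies in the kernel of $\theta I-A$, that is, in the $\theta$-eigenspace of $A$.

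Since $A$ is symmetric, so is $\theta I-A$, and therefore so is its adjugate. Consequently the first row, whose entries are $\phi_{1i}(X,\theta)$, coincides with the first column. So $\Zv$ satisfies $A\Zv=\theta\Zv$. Its first coordinate is $\phi_{11}(X,\theta)$, which is nonzero by hypothesis, so $\Zv\neq 0$ and $\Zv$ is a genuine eigenvector. This is exactly where the added hypothesis is used: without it the column could be identically zero. (Simplicity of $\theta$ is not needed for this part; it only guarantees that the adjugate has rank one, so that all nonzero columns are proportional.)

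For the final assertion, let $X$ be connected and $\theta=\lambda_1(X)$. By Perron--Frobenius, $\lambda_1$ is simple and its eigenspace is spanned by the Perron vector. So it suffices to check two things: that $\phi_{11}(X,\lambda_1)\neq0$, and that the resulting multiple is positive.

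The main point is the identity $\phi_{11}(X,t)=\det(tI-A(X-1))=\phi(X-1,t)$. The diagonal cofactor is the characteristic polynomial of the vertex-deleted graph, which is also the case $u=v$ of \cref{lem: cofactor characterstic}. For connected $X$, every component of $X-1$ has spectral radius strictly less than $\lambda_1(X)$. This follows from strict monotonicity of the Perron root under passing to a proper subgraph of a connected graph, or equivalently from interlacing combined with simplicity.

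Since $\phi(X-1,t)$ is monic and all its roots are strictly below $\lambda_1(X)$, we get $\phi(X-1,\lambda_1(X))>0$. Thus the hypothesis of the lemma holds automatically, and $\Zv(1)>0$. As $\Zv$ is a nonzero real multiple of the Perron vector with a positive coordinate, it is a positive multiple of it. After normalizing, $\Zv$ is the Perron vector, which is the sense in which ``the corresponding $\Zv$ is the Perron vector'' should be read.

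The only step requiring care is this strict inequality $\lambda_1(X-1)<\lambda_1(X)$. I would justify it by the standard Perron--Frobenius comparison: a nonnegative irreducible matrix strictly dominates the padded adjacency matrix of any proper subgraph entrywise somewhere, so its spectral radius is strictly larger.
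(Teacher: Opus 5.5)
Your proof is correct. It is the standard adjugate argument, $(\theta I-A)\operatorname{adj}(\theta I-A)=\phi(X,\theta)I=0$, behind the cited Lemma~4.2.3 of Godsil; the paper itself gives no proof beyond that citation.

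Your check that $\phi_{11}(X,\lambda_1)=\phi(X-1,\lambda_1)>0$ is a useful addition. It makes explicit that the added hypothesis holds automatically in the Perron case, which the paper later uses silently with an arbitrary vertex in place of $1$. One small correction: interlacing plus simplicity alone would not give $\lambda_1(X-1)<\lambda_1(X)$. What you actually need is the strict Perron--Frobenius monotonicity you invoke, or positivity of the Perron vector at vertex $1$.
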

\begin{lemma}\cite[Christoffel-Darboux identity, Lemma~4.3.3]{Godsil93}\label{lem:christoffel-darboux}
    Let $X$ be a graph and let $u$ and $v$ be vertices of $X$. Then
    \[\phi_{uv}(X,t)\phi'(X,t)-\phi_{uv}'(X,t)\phi(X,t)=\sum_{w\in V(X)}\phi_{uw}(X,t)\phi_{vw}(X,t). \]
\end{lemma}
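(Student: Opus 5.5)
We prove the Christoffel--Darboux identity as a matrix identity coming from differentiating the resolvent. Write $A=A(X)$ and $R(t)=(tI-A)^{-1}$, and work at any $t$ that is not an eigenvalue of $A$. Both sides of the claimed identity are polynomials in $t$, so it suffices to check equality for all but finitely many $t$; we then conclude by the identity theorem for polynomials.

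The key steps are as follows.

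\emph{Step 1.} Observe that $\frac{d}{dt}R(t)=-R(t)^2$. This follows by differentiating $(tI-A)R(t)=I$, which gives $R(t)+(tI-A)R'(t)=0$.

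\emph{Step 2.} Differentiate $\phi(X,t)=\det(tI-A)$ using Jacobi's formula:
\[
\phi'(X,t)=\operatorname{tr}\operatorname{adj}(tI-A)=\sum_{w}\phi_w(X,t).
\]
Equivalently, the ratio satisfies $\phi'/\phi=\operatorname{tr}R$. We only need the scalar form $\phi'(X,t)$ as it appears in the formula.

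\emph{Step 3.} Use $\phi_{uv}(X,t)=\phi(X,t)\,R(t)_{uv}$ and differentiate with the product rule:
\[
\phi_{uv}'=\phi' R_{uv}+\phi\,(R')_{uv}=\phi' R_{uv}-\phi\,(R^2)_{uv}.
\]

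\emph{Step 4.} Substitute this into the left-hand side:
\[
\phi_{uv}\phi'-\phi_{uv}'\phi=\phi R_{uv}\phi'-\phi\phi' R_{uv}+\phi^2 (R^2)_{uv}=\phi^2 (R^2)_{uv}.
\]

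\emph{Step 5.} Expand $(R^2)_{uv}=\sum_w R_{uw}R_{wv}$. Since $A$ is symmetric, $R$ is symmetric, so $R_{wv}=R_{vw}$. Hence
\[
\phi^2(R^2)_{uv}=\sum_w (\phi R_{uw})(\phi R_{vw})=\sum_w \phi_{uw}\phi_{vw},
\]
which is the right-hand side.

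The main point needing care is the passage from non-eigenvalue $t$ to all $t$. Each side is a polynomial in $t$, since adjugate entries are polynomials. The identity holds on the cofinite set of $t$ where $tI-A$ is invertible, so the two polynomials agree identically. Symmetry of $A$, which holds because the graph is undirected, is used only in Step 5.
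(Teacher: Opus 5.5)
Your argument is correct and complete: differentiating $R(t)=(tI-A)^{-1}$ via $R'=-R^2$, clearing denominators, using the symmetry of $A$, and extending from the cofinite set of non-eigenvalues by the identity principle for polynomials is the standard proof of this lemma. The paper does not reprove it but imports it from Godsil, where it likewise rests on $\frac{d}{dt}(tI-A)^{-1}=-(tI-A)^{-2}$. The only superfluous piece is Step~2: $\phi'$ cancels in Step~4, so Jacobi's formula is never actually used.
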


We define a \emph{double broom} as follows. Let $\ell\geq 1$ and let $d_1,d_2\geq 1$. The double broom $B(\ell,d_1,d_2)$ is obtained from a path of length $\ell$ with end vertices $u$ and $v$ by attaching $d_1-1$ leaves to $u$ and $d_2-1$ leaves to $v$. Thus
\[
    \deg(u)=d_1
    \qquad\text{and}\qquad
    \deg(v)=d_2.
\]
We call $u$ and $v$ the centers of the double broom. The double broom is called \emph{balanced} if $d_1=d_2$. For example, the tree $B(n-4,3,3)$ is the well known $\widehat D_n$, see \cref{fig:Dynkin graph}.

\paragraph{Increasing the spectral radius of graphs}
In this paragraph we present several tools that will be used to compare the spectral radii of trees. The first is the classical monotonicity of the spectral radius.
\begin{theorem}[Proposition~3.1.1,\cite{brouwer12}]\label{thm:spectral-radius-monotonicity}
Let $X$ be a connected graph and let $Y$ be a proper subgraph of $X$. Then $\lambda_1(X)$ is a simple eigenvalue and
\[
    \lambda_1(Y)<\lambda_1(X).
\]
\end{theorem}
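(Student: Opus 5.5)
The plan is to derive both claims of \cref{thm:spectral-radius-monotonicity} from Perron--Frobenius theory for nonnegative matrices, combined with the Rayleigh quotient characterization of $\lambda_1$. Since $X$ is connected, $A(X)$ is a nonnegative irreducible symmetric matrix. Two standard facts follow. First, $\lambda_1(X)=\max_{\|x\|=1}x^{\top}A(X)x$, and this maximum can be attained by a nonnegative vector, because replacing $x$ by $|x|$ entrywise does not decrease the quadratic form. Second, every nonnegative maximizer $x$ satisfies $A x=\lambda_1 x$, and is in fact strictly positive. Indeed, if $x(u)=0$ for some vertex $u$ with a neighbour $w$ satisfying $x(w)>0$, then $(Ax)(u)\geq x(w)>0=\lambda_1x(u)$, which contradicts the eigen-equation. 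By connectivity, a zero entry would therefore force $x=0$.

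For simplicity, suppose $\lambda_1$ had multiplicity at least two. Take a second eigenvector $y$ orthogonal to the positive Perron vector $\Zv$. Then $y$ has entries of both signs. Form $z=\Zv-cy$ with $c$ chosen so that $z\geq 0$ and $z$ has some zero entry. This $z$ is a nonnegative nonzero eigenvector for $\lambda_1$, hence a nonnegative maximizer of the Rayleigh quotient, so by the previous step it must be strictly positive. This contradicts the zero entry, so $\lambda_1(X)$ is simple.

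For strict monotonicity, let $Y$ be a proper subgraph of $X$. First I handle the case where $Y$ spans all of $V(X)$ by padding, which is harmless: any missing vertices can be added back as isolated vertices without changing $\lambda_1(Y)$, with $\lambda_1$ of a disconnected graph meaning the largest eigenvalue. Take a nonnegative unit vector $y$ attaining $\lambda_1(Y)=y^{\top}A(Y)y$. Since $A(Y)\leq A(X)$ entrywise, this gives $\lambda_1(Y)\leq y^{\top}A(X)y\leq\lambda_1(X)$. If equality held throughout, $y$ would be a nonnegative maximizer for $X$, hence strictly positive by the first paragraph. But $Y$ is proper, so either some edge $uv$ of $X$ is missing from $Y$ or some vertex is missing. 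In the first case $y^{\top}(A(X)-A(Y))y\geq 2y(u)y(v)>0$, so the first inequality is strict. In the second case the padded coordinate of $y$ at the missing vertex is zero, contradicting positivity. Either way $\lambda_1(Y)<\lambda_1(X)$.

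The main obstacle is the positivity step: showing that every nonnegative maximizer of the Rayleigh quotient is strictly positive. This uses connectivity of $X$, and both simplicity and strictness hinge on it. Everything else is routine comparison of quadratic forms.
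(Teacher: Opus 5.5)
Your proof is correct, but it takes a different route from the paper. The paper gives no argument of its own: it cites Proposition~3.1.1 of Brouwer--Haemers, which rests on the general Perron--Frobenius theorem for irreducible nonnegative matrices.

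You instead use the symmetry of $A(X)$ and work entirely with the Rayleigh quotient. A nonnegative maximizer always exists, and connectivity forces every nonnegative maximizer to be strictly positive. Both simplicity (via the combination $\Zv-cy$, which has a zero entry) and strict monotonicity (via $A(Y)\le A(X)$ entrywise) then reduce to that one positivity fact. This gives a short, self-contained argument. It also explicitly handles a disconnected $Y$, with $\lambda_1(Y)$ read as its largest eigenvalue. That is exactly how the paper later uses the result, for instance to get $\phi(X-P,\lambda_1)>0$ in the proof of \cref{lemma:ratio bound}. The Perron--Frobenius route is not tied to symmetry and yields more, such as $|\theta|\le\lambda_1(X)$ for every eigenvalue $\theta$ and the digraph case, but none of that is needed here.

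Two small tidy-ups. First, your ``missing vertex'' case is redundant. When $|V(X)|\ge 2$, any vertex of $X$ outside $V(Y)$ lies on an edge of $X$ that cannot belong to $Y$, so the missing-edge case already applies. Second, as written, the claim that the padded coordinate of $y$ vanishes needs a reason. The simplest fix is to take $y$ to be the zero-extension of a nonnegative maximizer on $V(Y)$.
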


Another useful tool for increasing the spectral radius of a graph is the following \emph{edge rotation} operation.
\begin{lemma}[Theorem 3.2, \cite{Rowlinson}]\label{lem: ERL}
    Let $X$ be a connected simple graph and suppose that $u, v, w\in V(X)$ such that $e=vw\in E(X)$ and $u$ is not adjacent to $w$.
    Let $\Zv$ be the Perron vector of $X$  and assume that $\Zv(u)\geq \Zv(v)$. Let $X'=X-vw+uw$, then
    \[\lambda_1(X')>\lambda_1(X). \]
\end{lemma}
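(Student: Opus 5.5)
The plan is the standard Rayleigh quotient argument. We compare the quadratic forms of $A=A(X)$ and $A'=A(X')$ at the Perron vector $\Zv$ of $X$, and then rule out equality using the eigenvalue equation at the vertex $u$. Throughout, $u\neq w$ and $u\neq v$. The case $u=v$ is impossible because $u$ is not adjacent to $w$ while $v$ is, and $u=w$ would make $uw$ a loop. So $X'$ is a simple graph on the same vertex set, and $A'$ is a real symmetric matrix. We never need connectivity of $X'$, since we only use that $\lambda_1(X')$ is the largest eigenvalue of $A'$.

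\emph{Step 1 (quadratic forms).} Let $\Zv$ be the Perron vector of $X$. Since $X$ is connected, $\Zv$ is a positive unit vector with $A\Zv=\lambda_1(X)\Zv$. The matrix $A'$ differs from $A$ only in the entries $(v,w)$, $(w,v)$, which become $0$, and $(u,w)$, $(w,u)$, which become $1$. Hence
\[
    \Zv^{T}A'\Zv-\Zv^{T}A\Zv = 2\Zv(w)\bigl(\Zv(u)-\Zv(v)\bigr)\geq 0,
\]
using $\Zv(w)>0$ and the hypothesis $\Zv(u)\geq \Zv(v)$. By the Rayleigh principle for symmetric matrices,
\[
    \lambda_1(X')\geq \Zv^{T}A'\Zv\geq \Zv^{T}A\Zv=\lambda_1(X).
\]

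\emph{Step 2 (strictness).} Suppose for contradiction that $\lambda_1(X')=\lambda_1(X)$. Then both inequalities above are equalities. In particular, $\Zv$ attains the maximum of the Rayleigh quotient of $A'$. A unit vector attaining this maximum lies in the top eigenspace, so $A'\Zv=\lambda_1(X)\Zv$. Now read off the $u$-coordinate. The row of $A'$ indexed by $u$ equals the row of $A$ indexed by $u$, except for a new $1$ in column $w$; since $u\neq v$, the deletion of $vw$ does not affect this row. Therefore
\[
    (A'\Zv)(u)=(A\Zv)(u)+\Zv(w)=\lambda_1(X)\Zv(u)+\Zv(w)>\lambda_1(X)\Zv(u),
\]
which contradicts $A'\Zv=\lambda_1(X)\Zv$. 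Hence $\lambda_1(X')>\lambda_1(X)$.

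The only delicate point is Step 2. Equality in the Rayleigh bound must be turned into an eigenvector equation for $A'$, and this uses only the symmetry of $A'$. A single coordinate, namely the one at $u$ rather than at $w$ or $v$, then yields the contradiction, with positivity of $\Zv(w)$ supplying the strict inequality.
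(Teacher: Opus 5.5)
Your proof is correct and follows essentially the same argument the paper gives for its generalization, \cref{lem:general-ERL}, whose case $k=1$ is exactly this statement: compare the quadratic forms of $A(X')$ and $A(X)$ at the Perron vector $\Zv$ of $X$, then rule out equality because $\Zv$ would have to be an eigenvector of $A(X')$, which fails at the coordinate $u$ since $u$ gains the neighbour $w$. The paper only cites Rowlinson for this lemma, and your explicit justification that equality in the Rayleigh quotient forces $A(X')\Zv=\lambda_1(X)\Zv$, together with your remark that connectivity of $X'$ is never used, fills in details the paper's version leaves implicit.
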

For our purposes, we will need a more general version of \cref{lem: ERL} which we give below.

\begin{lemma}\label{lem:general-ERL}
    Let $X$ be a connected simple graph on $n$ vertices, and $u, v_1,\dots,v_k\in V(X)$ for some $k\ge 1$, where $u \neq v_i$ but the $v_i$ are not necessarily distinct. For each $i$, let $w_i$ be a neighbour of $v_i$ such that $u\neq w_i$, $u$ is not adjacent to $w_i$ and $w_i\neq w_j$ whenever $i\neq j$. Additionally that the edges $v_iw_i$, $i=1,\dots,k$, are pairwise distinct. Let $\Zv$ be the Perron vector of $X$ and assume for all $i$ that  \[\Zv(u)\geq\Zv(v_i).\] Let $X'$ be the graph obtained from $X$ by replacing the edges $v_iw_i$ by $uw_i$ for all $i=1,\dots,k$. Then
    \[\lambda_1(X')>\lambda_1(X).\]
\end{lemma}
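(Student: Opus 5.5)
We follow the standard Rayleigh quotient argument behind \cref{lem: ERL}, applied to all $k$ rotations at once. Let $\Zv$ be the Perron vector of $X$, normalized with $\Zv^{\top}\Zv=1$. Write $A=A(X)$ and $A'=A(X')$. We will show that $\Zv^{\top}A'\Zv\ge \Zv^{\top}A\Zv=\lambda_1(X)$, and then rule out equality.

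\emph{Step 1: $X'$ is simple and the rotations do not interfere.} We first check that $A'$ is obtained from $A$ by deleting exactly the $k$ edges $v_iw_i$ and adding exactly the $k$ edges $uw_i$.
\begin{itemize}
\item The deleted edges are pairwise distinct by hypothesis.
\item The added edges $uw_i$ are pairwise distinct because the $w_i$ are distinct.
\item No added edge is already present in $X$, since $u\not\sim w_i$ and $u\neq w_i$.
\item No added edge coincides with a deleted edge, since $u\neq v_j$ for all $j$.
\end{itemize}
Hence $X'$ is a simple graph on the same vertex set, and
\[
\Zv^{\top}A'\Zv-\Zv^{\top}A\Zv=2\sum_{i=1}^k \Zv(w_i)\bigl(\Zv(u)-\Zv(v_i)\bigr)\ge 0,
\]
because $\Zv>0$ and $\Zv(u)\ge\Zv(v_i)$. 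By the Rayleigh principle, $\lambda_1(X')\ge \Zv^{\top}A'\Zv\ge\lambda_1(X)$. Here $\lambda_1(X')$ denotes the largest eigenvalue of $A'$; this is defined even if $X'$ is disconnected.

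\emph{Step 2: strictness.} Suppose, for contradiction, that $\lambda_1(X')=\lambda_1(X)$. Then equality holds throughout, so $\Zv$ attains the maximum of the Rayleigh quotient of the symmetric matrix $A'$. Therefore $\Zv$ is an eigenvector of $A'$, that is, $A'\Zv=\lambda_1(X)\Zv=A\Zv$.

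Now compare the $u$-th coordinates. The neighbourhood of $u$ in $X'$ is $N_X(u)\cup\{w_1,\dots,w_k\}$, because $u$ loses no edges: $u\neq v_i$ and $u\neq w_i$. This gives
\[
(A'\Zv)(u)=(A\Zv)(u)+\sum_{i=1}^k\Zv(w_i)>(A\Zv)(u),
\]
since $k\ge1$ and $\Zv>0$. This contradicts $A'\Zv=A\Zv$, so $\lambda_1(X')>\lambda_1(X)$.

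The argument needs neither connectivity of $X'$ nor the Perron--Frobenius theorem for $X'$; positivity of $\Zv$ on $X$ is enough.

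The main point needing care is Step 1, the bookkeeping that makes the quadratic form difference exactly the displayed sum. This requires the deleted edges to be distinct, the new edges to be distinct and absent from $X$, and no deleted edge to meet $u$ in a way that cancels a new edge. The hypotheses on $u$, the $v_i$ and the $w_i$ are precisely what guarantee this.
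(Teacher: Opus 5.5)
Your proof is correct and follows the same route as the paper: you evaluate the Rayleigh quotient of $A(X')$ at the unit Perron vector of $X$ to get $\lambda_1(X')\ge\lambda_1(X)$, then rule out equality because $\Zv$ would have to be an eigenvector of $A(X')$ too, which fails at coordinate $u$ since $u$ gains the neighbours $w_1,\dots,w_k$ and loses none. Your bookkeeping in Step~1 and your remark that $X'$ need not be connected make explicit points the paper leaves implicit; in the bullet on added versus deleted edges, the cleaner reason is simply that the added edges are non-edges of $X$ while the deleted ones are edges.
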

\begin{proof} Let $B = A(X') - A(X)$; we can write out the entries of B as follows:
  \[B(a, b)=\begin{cases}
        1, & \text{if $\{a,b\} = \{u, w_i\}$ for some $i$;}\\
        -1, & \text{if $\{a,b\} = \{v_i,w_i\}$  for some $i$;}\\
        0, & \text{otherwise.}
    \end{cases}\]
    Let $\Zv$ be the unit Perron vector to $X$, then
    \[\lambda_1(X')\geq \Zv^TA(X')\Zv=\Zv^TA(X)\Zv+\Zv^TB\Zv. \]
    By definition of $B$ we have
    \[\Zv^TB\Zv=2\left(\sum_{i=1}^k\Zv(u)\Zv(w_i)-\Zv(v_i)\Zv(w_i)\right).\]
    Since $\Zv(u)\geq\Zv(v_i)$ for every $i$, we have $\Zv^TB\Zv\geq0$, thus
    \begin{equation}\label{eq:edge-rotation-main}
        \lambda_1(X')\geq \Zv^TA(X')\Zv=\Zv^TA(X)\Zv+\Zv^TB\Zv\geq\Zv^TA(X)\Zv=\lambda_1(X).
    \end{equation}
   It remains to rule out equality. Suppose, for contradiction, that $\lambda_1(X)=\lambda_1(X')$, then by the previous inequalities we have that $\Zv$ is an eigenvector for $X$ and $X'$. However, $u$ has strictly more neighbours in $X'$ than in $X$, thus
    \[
        \lambda_1(X)\Zv(u)=\sum_{uv\in E(X')}\Zv(v)>\sum_{uv\in E(X)}\Zv(v)=\lambda_1(X)\Zv(u),
    \]
    which is a contradiction.
\end{proof}

\begin{figure}[htb]
    \centering
    \begin{tikzpicture}[main_node/.style={circle,draw,minimum size=0.8em,inner sep=3pt}]

        \node[main_node] (0) at (0,1) {};
        \node[main_node] (1) at (0,-1) {};
        \node[main_node] (2) at (-0.7,-0.5) {};
        \node[main_node] (3) at (-0.7, 0.5) {};
        \node[main_node] (4) at (0,0) {};
        \node[above, yshift=2pt, xshift=8pt] at (4) {$u$};
        \node[main_node] (5) at (1,0) {};
        \node[main_node] (6) at (2,0) {};
        \node[above, xshift=8pt] at (6) {$v_1$};
        \node[below, yshift=-2pt, xshift=8pt] at (6) {$v_2$};
        \node[main_node] (7) at (3,0) {};
        \node[below, yshift=-2pt, xshift=8pt] at (7) {$v_3$};
        \node[main_node] (8) at (2,1) {};
        \node[xshift=13pt] at (8) {$w_1$};
        \node[main_node] (9) at (2,-1) {};
        \node[xshift=13pt] at (9) {$w_2$};
        \node[main_node] (10) at (4,0) {};
        \node[below, yshift=-2pt, xshift=8pt] at (10) {$w_3$};
        \path[draw, thick]
        (4) edge node {} (3)
        (4) edge node {} (2)
        (4) edge node {} (1)
        (4) edge node {} (0)
        (4) edge node {} (5)
        (5) edge node {} (6)
        (6) edge[red] node {} (8)
        (6) edge[blue] node {} (9)
        (6) edge node {} (7)
        (7) edge[green] node {} (10);
        \begin{scope}[xshift=6.5cm]
            \node[main_node] (11) at (-0.7,0.5) {};
            \node[main_node] (12) at (0,0) {};
            \node[above, yshift=2pt, xshift=8pt] at (12) {$u$};
            \node[main_node] (13) at (-0.7,-0.5) {};
            \node[main_node] (14) at (0,1) {};
            \node[main_node] (15) at (0,-1) {};
            \node[main_node] (16) at (1,0) {};
            \node[main_node] (17) at (2,0) {};
            \node[above, xshift=8pt] at (17) {$v_1$};
            \node[below, yshift=-2pt, xshift=8pt] at (17) {$v_2$};
            \node[main_node] (18) at (2,1) {};
            \node[xshift=13pt] at (18) {$w_1$};
            \node[main_node] (19) at (3,0) {};
            \node[below, yshift=-2pt, xshift=8pt] at (19) {$v_3$};
            \node[main_node] (20) at (4,0) {};
            \node[below, yshift=-2pt, xshift=8pt] at (20) {$w_3$};
            \node[main_node] (21) at (2,-1) {};
            \node[xshift=13pt] at (21) {$w_2$};

            \path[draw, thick]
            (12) edge node {} (14)
            (12) edge node {} (13)
            (12) edge node {} (11)
            (12) edge node {} (15)
            (12) edge node {} (16)
            (16) edge node {} (17)
            (18) edge[red] node {} (12)
            (21) edge[blue] node {} (12)
            (19) edge node {} (17);
            \draw[thick, green] (20) to [bend right=22] (12);
        \end{scope}

    \end{tikzpicture}
    \caption{A tree illustrating the application of \cref{lem:general-ERL}. Note that in this example, we have $v_1=v_2$, but $w_1\neq w_2$.}
    \label{fig:example of general ERL}
\end{figure}

The next technique is by edge contraction. Edge contraction is a natural graph operation and the effect on the spectral radius is well understood. A path $P$ in a graph $X$ is called pendent if all internal vertices of $P$ have degree $2$ in $X$ and one endpoint of $P$ has degree $1$ in $X$.
\begin{proposition}\cite{mckee2018contraction}\label{prop:contraction}
    Let $X$ be a graph and let $e\in E(X)$.  If $\lambda_1(X)\ge 2$ and $e$ is not on any pendent path, then
    \[
    \lambda_1(X/e)>\lambda_1(X),
    \]
    and if $e$ is on a pendent path, then
    \[
        \lambda_1(X/e)<\lambda_1(X).
    \]
\end{proposition}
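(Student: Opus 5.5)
The plan treats the two cases separately and works componentwise: only the component containing $e$ changes under contraction, so we may assume $X$ is connected. Write $e=uv$ and let $w$ be the merged vertex of $X/e$.

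\emph{Pendent case.} Suppose $e$ lies on a pendent path $P$ ending at a leaf $\ell$. Contracting $e$ shortens $P$ by one edge and leaves the rest of $X$ unchanged. Hence $X/e$ is isomorphic to $X-\ell$, which is a proper connected subgraph of $X$. \cref{thm:spectral-radius-monotonicity} then gives $\lambda_1(X/e)<\lambda_1(X)$, with no hypothesis on $\lambda_1$ needed. The degenerate case where $X$ is itself a path (so $u$ or $v$ is a leaf) is covered by the same argument.

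\emph{Non-pendent case: setup.} Let $\lambda=\lambda_1(X)\ge 2$ and let $\Zv$ be the Perron vector of $X$. By symmetry assume $\Zv(u)\ge \Zv(v)$. Since $e$ is on no pendent path, both components of $X-e$ (or, if $e$ lies on a cycle, both sides of $e$) contain a vertex of degree $\ge 3$ or a cycle. I plan to build a test vector $\mathbf y$ on $V(X/e)$ by setting
\[
\mathbf y(z)=\Zv(z)\ (z\ne w),\qquad \mathbf y(w)=\Zv(u)+c\,\Zv(v),
\]
with $c\in[0,1]$ to be optimised. The goal is to show $\mathbf y^TA(X/e)\mathbf y>\lambda\,\mathbf y^T\mathbf y$.

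\emph{Non-pendent case: the computation.} Expanding the quadratic form uses the identities $\sum_{z\in N(v)\setminus u}\Zv(z)=\lambda\Zv(v)-\Zv(u)$ and $\sum_{z\in N(u)\setminus v}\Zv(z)=\lambda\Zv(u)-\Zv(v)$. Common neighbours of $u$ and $v$ create parallel edges, which become single edges in the simple graph $X/e$. These contribute only a correction term, which I would bound using the triangle structure. The resulting inequality is a quadratic in the ratio $r=\Zv(v)/\Zv(u)\in(0,1]$, and the condition $\lambda\ge 2$ should be exactly what makes it positive. This mirrors the Hoffman--Smith subdivision theorem, for which contraction is essentially the inverse operation, and where $\lambda=2$ is the borderline attained by $\widehat D_n$ and cycles.

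\emph{Fallback and main obstacle.} If the test-vector estimate is not strong enough, I would switch to characteristic polynomials. Using \cref{prop:charpoly-tree-recurrence} (or its general-graph analogue) at $u$ and at $w$, I would compare $\phi(X,t)$ and $\phi(X/e,t)$ for $t\ge\lambda_1(X/e)$. I would then show $\phi(X/e,\lambda)<0$, i.e.\ that $\lambda$ lies below the largest root of $\phi(X/e,t)$, by interpreting the ratios $\phi(X-u,t)/\phi(X,t)$ as walk generating functions via \cref{prop:walk-generating-function}. The main obstacle in either route is the same: the non-pendent hypothesis must be used to push the inequality past equality. I would handle this by comparing with the extremal configurations where the inequality is tight ($\widehat D_n$, cycles), and using strict monotonicity (\cref{thm:spectral-radius-monotonicity}) to get strictness once a branch vertex or cycle is present on each side of $e$.
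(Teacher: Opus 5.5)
The paper gives no proof of this proposition (it is quoted from the literature), so I am judging your plan on its own. Your pendent case is correct: $X/e\cong X-\ell$, and monotonicity finishes it. This needs $X$ connected; your reduction to the component of $e$ does not preserve strict inequalities, so connectivity should simply be assumed.

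The gap is in the non-pendent case. A test vector that agrees with $\Zv$ off $w$ cannot work, whatever value $s=\mathbf y(w)$ you choose. Suppose $u,v$ have no common neighbour (e.g.\ in a tree), and put $a=\Zv(u)$, $b=\Zv(v)$. The eigen-equations at $u$ and $v$ give
\[
\mathbf y^TA(X/e)\mathbf y-\lambda\,\mathbf y^T\mathbf y=2ab-\lambda(a^2+b^2)+2s(\lambda-1)(a+b)-\lambda s^2 .
\]
The maximum over $s$ is positive if and only if $(2\lambda-1)(a^2+b^2)<2(\lambda^2-\lambda+1)ab$. At $\lambda=2$ this reads $3(a-b)^2<0$. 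For $\lambda>2$ it fails when $b/a$ is near $1/\lambda$. For instance, the double star $B(1,m+1,3)$ has $b/a=\lambda/(\lambda^2-2)$, and the inequality fails for all large $m$ (already for $m=10$), even though $X/e\cong K_{1,m+2}$ does have larger spectral radius. So $\lambda\ge 2$ does not make your quadratic in $r$ positive, and the fallback paragraph names the obstacle without resolving it.

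The missing idea is that contraction moves the whole branch at $v$, so the test vector must move with it. For a tree, keep $\Zv$ on the side of $u$, set $\mathbf y(w)=a$, and multiply $\Zv$ by $a/b$ on the side of $v$. The same computation then gives exactly $\mathbf y^TA(X/e)\mathbf y-\lambda\,\mathbf y^T\mathbf y=\tfrac{a}{b}(\lambda ab-a^2-b^2)$. So you need $\lambda>a/b+b/a$, and this is where the non-pendent hypothesis is used:
\begin{enumerate}[(i)]
\item Root each side of $e$ at its endpoint and let $g(z)=\sum_c \Zv(c)/\Zv(z)$ over the children $c$ of $z$. The eigen-equation gives $g(z)=\sum_c 1/(\lambda-g(c))$ with $0\le g<\lambda$, and also $b/a=\lambda-g(u)$ and $a/b=\lambda-g(v)$.
\item Let $\theta=\tfrac12(\lambda-\sqrt{\lambda^2-4})$, the smaller fixed point of $x\mapsto 1/(\lambda-x)$. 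If $\lambda>2$, a vertex with at least two children has $g\ge 2/\lambda>\theta$, and $g(c)>\theta$ implies $1/(\lambda-g(c))>\theta$.
\item Since neither side of $e$ is a hanging path, induction gives $g(u),g(v)>\theta$. Hence $a/b,\,b/a<\lambda-\theta=1/\theta$, so $a/b+b/a<\theta+1/\theta=\lambda$.
\end{enumerate}

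Separately, you cannot extract strictness at $\lambda=2$ from a branch vertex or cycle on each side of $e$, because the statement as written is false there. Contracting an edge on the path between the two centres of $\widehat D_n$ gives $\widehat D_{n-1}$ (e.g.\ $B(1,3,3)/e\cong K_{1,4}$). Also $C_n/e\cong C_{n-1}$ for $n\ge 4$. All of these have spectral radius $2$; in the argument above this is the case $\theta=2/\lambda=1$. Nor can the correction term from common neighbours be controlled for simple graphs in general: $K_4/e\cong K_3$, so the spectral radius drops from $3$ to $2$. The non-pendent half should therefore be read for trees with $\lambda_1(X)>2$. That is how it is used in \cref{lem: contracting plus adding leaf}, where it is applied to $T_1$ with $\lambda_1(T_1)>2$.
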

A direct corollary is the following lemma that we will use later, is what we call \emph{edge teleportation (ET)} operation.
\begin{lemma}\label{lem: contracting plus adding leaf}
    Let $T$ be a tree with $\lambda_1(T)\geq 2$. Let $v\in V(T)$ and $e\in E(T)$ given, such that $e$ is not on a pendent path of $T$. Let $T'$ be the tree obtained from $T$ by adding a new leaf at $v$ and then contracting $e$. Then
    \[
        \lambda_1(T)<\lambda_1(T').
    \]
\end{lemma}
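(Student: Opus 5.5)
The plan is to factor the operation $T\mapsto T'$ into two steps and compare spectral radii step by step. Let $T_1$ be the tree obtained from $T$ by adding a new leaf $\ell$ adjacent to $v$. Then $T'=T_1/e$. Since $T$ is a proper subgraph of the connected graph $T_1$, \cref{thm:spectral-radius-monotonicity} gives
\[
    \lambda_1(T_1)>\lambda_1(T)\geq 2 .
\]
So the hypothesis $\lambda_1\geq 2$ of \cref{prop:contraction} holds for $T_1$.

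Next we check that $e$ is not on any pendent path of $T_1$, so that the first case of \cref{prop:contraction} applies. This yields $\lambda_1(T')=\lambda_1(T_1/e)>\lambda_1(T_1)>\lambda_1(T)$, which is the claim. Degrees only grow when passing from $T$ to $T_1$: $v$ gains one and every other old vertex is unchanged. The only new vertex is the leaf $\ell$, and its unique edge $v\ell$ is not $e$.

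Suppose, for contradiction, that $e$ lies on a pendent path $P$ of $T_1$. Every internal vertex of $P$ has degree $2$ in $T_1$, and one endpoint of $P$ has degree $1$ in $T_1$. We split into two cases according to whether $P$ passes through $\ell$.
\begin{enumerate}[(i)]
    \item If $P$ avoids $\ell$, then $P$ is a path in $T$. Each internal vertex has degree $2$ in $T_1$, hence degree at most $2$ in $T$, and in fact exactly $2$, since it has two neighbours on $P$. Likewise the degree-$1$ endpoint has degree $1$ in $T$. So $P$ is pendent in $T$, contradicting the hypothesis on $e$.
    \item If $P$ contains $\ell$, then $\ell$ has degree $1$, so it must be an endpoint of $P$, and $P$ continues through $v$. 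Deleting $\ell$ from $P$ leaves a path $P_0$ in $T$ which still contains $e$ (as $e\neq v\ell$) and starts at $v$. Its internal vertices have degree $2$ in $T$, and so does its other endpoint if that endpoint was internal in $P$.
\end{enumerate}

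Case (ii) is the delicate point. In $T$ the vertex $v$ has degree $\deg_{T_1}(v)-1$. If $v$ was internal in $P$, then $\deg_T(v)=1$, so $v$ is a leaf of $T$ and $P_0$ is a pendent path of $T$ through $e$, again a contradiction. If instead $v$ was the other endpoint of $P$, then $P=\ell v$ does not contain $e$.

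This case analysis, checking that pendency cannot be created by attaching the leaf, is the only real obstacle. The rest is direct from \cref{thm:spectral-radius-monotonicity} and \cref{prop:contraction}.
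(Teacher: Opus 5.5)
Your proposal is correct and is essentially the paper's proof: both pass through the intermediate tree $T_1$ (the new leaf at $v$), use \cref{thm:spectral-radius-monotonicity} to get $\lambda_1(T_1)>\lambda_1(T)\geq 2$, rule out a pendent path of $T_1$ through $e$ by splitting on whether it uses the new leaf, and then invoke \cref{prop:contraction}. Your case (ii) is more careful than the paper's one-line version, since you make explicit that $v$ must be internal in $P$ and hence a leaf of $T$; the clause about ``its other endpoint if that endpoint was internal in $P$'' is vacuous, because that endpoint is also an endpoint of $P$, and can simply be dropped.
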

\begin{proof}
Let $T_1$ be the tree obtained from $T$ by attaching a new leaf at $v$. By \cref{thm:spectral-radius-monotonicity},
\[
    \lambda_1(T_1)>\lambda_1(T)\geq 2.
\]

We claim that $e$ is not on any pendent path of $T_1$. Indeed, if such a pendent path in $T_1$ did not use the new leaf, then it would already be a pendent path in $T$. If it did use the new leaf, then deleting the new leaf from that path would give a pendent path in $T$ containing $e$. In either case, this contradicts the assumption that $e$ is not on any pendent path of $T$.

Thus, by \cref{prop:contraction}, we have
\[
    \lambda_1(T')>\lambda_1(T_1)>\lambda_1(T).\qedhere
\]
\end{proof}
An example of the edge teleportation can be seen in \cref{fig: example edge teleportation}.
\begin{figure}[htb]
    \centering
    \begin{tikzpicture}[main_node/.style={circle,draw,minimum size=0.8em,inner sep=3pt}]
        \begin{scope}
            \node[main_node] (0) at (-0.5,0.5) {};
            \node[main_node] (1) at (0,0) {};
            \node[main_node] (2) at (-0.5, -0.5) {};
            \node[main_node] (3) at (1,0) {};
            \node[main_node] (4) at (2,0) {};
            \node[main_node] (5) at (3,0) {};
            \node[main_node] (6) at (4,0) {};
            \node[main_node] (7) at (3,1) {};
            \node[main_node] (8) at (3 , -1) {};
            \node[] at (0.5,0.3) {$e$};
            \node[] at (4,0.4) {$v$};
            \node[] at (0,1) {$T$};

             \path[draw, thick]
            (1) edge node {} (0)
            (1) edge node {} (2)
            (1) edge node {} (3)
            (3) edge node {} (4)
            (4) edge node {} (5)
            (5) edge node {} (6)
            (5) edge node {} (7)
            (5) edge node {} (8)
            ;
        \end{scope}
        \begin{scope}[xshift=6cm]
            \node[main_node] (0) at (-0.5,0.5) {};
            \node[main_node] (1) at (0,0) {};
            \node[main_node] (2) at (-0.5, -0.5) {};
            \node[main_node] (4) at (2,0) {};
            \node[main_node] (5) at (3,0) {};
            \node[main_node] (6) at (5,0) {};
            \node[main_node] (7) at (3,1) {};
            \node[main_node] (8) at (3 , -1) {};
            \node[main_node] (9) at (4 , 0) {};
            \node[] at (0,1) {$T'$};
            \node[] at (4,0.4) {$v$};

             \path[draw, thick]
            (1) edge node {} (0)
            (1) edge node {} (2)
            (1) edge node {} (4)
            (4) edge node {} (5)
            (5) edge node {} (8)
            (5) edge node {} (7)
            (5) edge node {} (9)
            (6) edge node {} (9)
            ;
        \end{scope}
        \end{tikzpicture}
    \caption{A tree $T$ and a tree $T'$ obtained from $T$ by applying edge teleportation to the edge $e$ and the vertex $v$.}
    \label{fig: example edge teleportation}
\end{figure}

The final operation that we introduce is the \emph{generalized tree shift (GTS)}, first defined in \cite[Definition~2.1]{Csikvari2010}. Let $T$ be a tree, and let $u,v\in V(T)$ be vertices such that all internal vertices of the path from $u$ to $v$, if any, have degree $2$ in $T$. Let $w$ be the neighbour of $v$ lying on the path from $u$ to $v$. The \emph{generalized tree shift} of $T$ from $v$ to $u$ is the tree $T'$ obtained by deleting all edges between $v$ and the vertices in $N(v)\setminus\{w\}$ and adding all edges between $u$ and the vertices in $N(v)\setminus\{w\}$.

We call $u$ the \emph{beneficiary} and $v$ the \emph{candidate} of the generalized tree shift. See \cref{fig:GTS} for a schematic example.

\begin{figure}[htb]
    \centering
    \begin{tikzpicture}[main_node/.style={circle,draw,minimum size=0.8em,inner sep=3pt]},scale=0.9]

            \begin{scope}   \node[main_node] (0) at (-2, 1.5) {};
            \node[main_node] (1) at (-2,0.7) {};
            \node[main_node] (2) at (-1,1) {};
            \node[main_node] (3) at (-2,0 ) {};
            \node[main_node] (4) at (-1,0) {};
            \node[main_node] (5) at (-1,-0.8) {};
            \node[main_node] (6) at (0,0) {};
            \node[below, yshift=-4pt] at (6) {$u$};
            \node[main_node] (7) at (1,0) {};
            \node[main_node] (8) at (2,0) {};
            \node[main_node] (9) at (3,0) {};
            \node[below, yshift=-4pt] at (9) {$v$};
            \node[main_node] (10) at (4,0.5) {};
            \node[main_node] (11) at (5,1) {};
            \node[main_node] (12) at (5,0) {};
            \node[main_node] (13) at (5.5,0.5) {};
            \node[main_node] (14) at (5,-0.8) {};
            \node[main_node] (15) at (4,-0.8) {};

            \draw (-1.6,0.3) ellipse (1.2cm and 2cm);
            \draw (4.6,0.3) ellipse (1.2cm and 2cm);

            \path[draw, thick]
            (0) edge (2)
            (1) edge (2)
            (3) edge (4)
            (5) edge (6)
            (6) edge (4)
            (6) edge (2)
            (6) edge (7)
            (7) edge (8)
            (8) edge (9)
            (9) edge (10)
            (9) edge (15)
            (10) edge (11)
            (10) edge (12)
            (12) edge (14)
            (12) edge (13);

        \end{scope}

        \begin{scope}[shift={(10,0)}]

            \node[main_node] (17) at (-2,1.5) {};
            \node[main_node] (18) at (-1,1) {};
            \node[main_node] (19) at (-2,0.7) {};
            \node[main_node] (20) at (-2,0) {};
            \node[main_node] (21) at (-1,0) {};
            \node[main_node] (22) at (0,0) {};
            \node[below, yshift=-4pt] at (22) {$u$};
            \node[main_node] (23) at (-1,-0.8) {};
            \node[main_node] (24) at (0,1) {};
            \node[main_node] (25) at (0,2) {};
            \node[main_node] (26) at (0,3) {};
            \node[right, xshift=2pt] at (26) {$v$};
            \node[main_node] (27) at (2,0.5) {};
            \node[main_node] (28) at (3,1) {};
            \node[main_node] (29) at (3,0) {};
            \node[main_node] (30) at (3.5,0.5) {};
            \node[main_node] (31) at (3,-0.8) {};
            \node[main_node] (32) at (2,-0.8) {};

            \draw (-1.6,0.3) ellipse (1.2cm and 2cm);
            \draw (2.6,0.3) ellipse (1.2cm and 2cm);

            \path[draw, thick]
            (17) edge (18)
            (18) edge (19)
            (20) edge (21)
            (21) edge (22)
            (22) edge (23)
            (22) edge (18)
            (22) edge (24)
            (24) edge (25)
            (25) edge (26)
            (22) edge (27)
            (27) edge (28)
            (27) edge (29)
            (29) edge (30)
            (29) edge (31)
            (32) edge (22);

        \end{scope}
    \end{tikzpicture}
    \caption{The generalized tree shift demonstrated with $u$ as the beneficiary and $v$ as the candidate.}
    \label{fig:GTS}
\end{figure}
\begin{theorem}[Corollary 4.7, \cite{Csikvari2010}]\label{thm: GTS increasing radius}
    Let $T'$ be a generalized tree shift of $T$. Then,
    \[\lambda_1(T')\geq\lambda_1(T).\]
    Equality holds if and only if $T'\cong T$; otherwise, the inequality is strict.
\end{theorem}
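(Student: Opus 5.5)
The plan is to derive the statement directly from the generalized edge-rotation \cref{lem:general-ERL}, using a symmetry of the generalized tree shift, rather than comparing characteristic polynomials as Csikv\'ari does.

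\textbf{Setting up the symmetric description.} Let $P$ be the $u$--$v$ path, with $k\ge 2$ vertices, whose internal vertices all have degree $2$. Deleting the internal vertices of $P$ and the end edges of $P$ splits $T$ into a tree $A$ rooted at $u$ and a tree $B$ rooted at $v$. Then $T$ is $A$ and $B$ joined by the path $P$. The shift $T'$ from $v$ to $u$ is obtained by gluing $A$ and $B$ at their roots and attaching a pendant path on $k-1$ further vertices at the common root.

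This description is symmetric in $(A,u)$ and $(B,v)$. Hence the shift from $u$ to $v$ produces a tree isomorphic to $T'$. I will use this symmetry to choose the direction of the shift.

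\textbf{Main step.} Let $\Zv$ be the Perron vector of $T$. By the symmetry above, I may assume $\Zv(u)\ge \Zv(v)$; otherwise I swap the roles of $u$ and $v$ and obtain an isomorphic $T'$.

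Let $w$ be the neighbour of $v$ on $P$, and let $N(v)\setminus\{w\}=\{w_1,\dots,w_m\}$. Replacing the edges $vw_i$ by $uw_i$ for all $i$ is exactly the shift from $v$ to $u$. I then check the hypotheses of \cref{lem:general-ERL} with all $v_i=v$:
\begin{itemize}
\item The $w_i$ are distinct and the edges $vw_i$ are distinct.
\item We have $u\ne w_i$, because $u$ either lies on $P$ or equals $w$.
\item The vertex $u$ is not adjacent to any $w_i$: otherwise $u,v,w_i$ together with the path $P$ would form a cycle in the tree $T$.
\item The condition $\Zv(u)\ge \Zv(v)$ holds by the assumption above.
\end{itemize}
If $m\ge 1$, \cref{lem:general-ERL} therefore gives $\lambda_1(T')>\lambda_1(T)$.

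\textbf{Equality case.} If $m=0$, then $v$ is a leaf, nothing moves, and $T'=T$. If instead we had to swap roles and $u$ is a leaf, then $A$ is trivial. In that case $T$ is $B$ with a pendant path at $v$, which is isomorphic to $T'$. In all remaining cases the inequality is strict, and the resulting tree is non-isomorphic to $T$, since its spectral radius differs.

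This yields $\lambda_1(T')\ge\lambda_1(T)$, with equality if and only if $T'\cong T$.

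\textbf{Main obstacle.} The delicate point is the direction reduction: verifying carefully that the shift from $u$ to $v$ is isomorphic to the shift from $v$ to $u$, including the degenerate cases $k=2$ and a trivial $A$ or $B$. Once this is settled, the rest is a direct application of \cref{lem:general-ERL}.
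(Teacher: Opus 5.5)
Your proof is correct. It also takes a genuinely different route from the paper, which gives no proof of this statement and simply imports it as Corollary~4.7 of Csikv\'ari, where it is derived at the level of characteristic polynomials.

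Your key observation is a symmetry. Write $T$ as $A$ (rooted at $u$) and $B$ (rooted at $v$) joined by the bare path $P$. Then both shifts, from $v$ to $u$ and from $u$ to $v$, produce $A$ and $B$ glued at their roots with a pendant path of $k-1$ further vertices. So you may choose the direction in which the beneficiary has the larger Perron entry. In that direction the shift is exactly an instance of \cref{lem:general-ERL} with all $v_i$ equal to the candidate. Strict inequality follows at once. Equality can only occur when the candidate in the chosen direction is a leaf, in which case nothing moves and $T'\cong T$.

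One small wording fix. The reason $u\neq w_i$ and $u\not\sim w_i$ is not that ``$u$ lies on $P$ or equals $w$''. It is that $w$ is the only neighbour of $v$ on $P$, so no $w_i$ lies on $P$ while $u$ does. An adjacency $uw_i$ would then close a cycle with $P$.

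Your route buys self-containment. It uses only the Rayleigh-quotient rotation lemma the paper already proves, so the paper could drop its reliance on the citation for this step. It is the same direction-choosing symmetry trick familiar from the Kelmans transformation. The cost is scope: the variational argument yields only the inequality for $\lambda_1$. The polynomial comparison behind Csikv\'ari's corollary carries more information about $T$ versus $T'$, and that is what he uses to establish monotonicity of other parameters along the tree-shift poset.
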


\section{Sustainability}\label{sec:sustainability}
Recall that an edge of a tree is sustainable if it can be removed and replaced by another edge so that the resulting graph is again a tree and the spectral radius does not decrease. Since the path $P_n$ uniquely minimizes the spectral radius among trees on $n$ vertices, every edge of $P_n$ is sustainable. Similarly, since the star uniquely maximizes the spectral radius among trees on $n$ vertices for $n\ge 4$, every star on at least 4 vertices is non-sustainable.

These two extremal examples suggest that sustainability is related to how path-like or star-like a tree is. In this section we prove several basic properties of sustainable trees and give families of sustainable and non-sustainable examples.
\subsection{Basic properties}

\begin{lemma}\label{lem:ERLsustainable}
    Suppose $T$ is a tree and $u$ is a principal vertex of $T$. If $e=vw$ is an edge not incident to $u$, then there exists $x\in\{v,w\}$ such that the graph
    \[T' = T - e + ux\]
    is a tree and $\lambda_1(T') > \lambda_1(T)$.
\end{lemma}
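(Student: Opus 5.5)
The plan is to reduce the statement directly to the edge-rotation lemma, \cref{lem: ERL}, by choosing the endpoint $x$ of $e$ according to which side of $e$ contains $u$. Since $T$ is a tree and $e=vw$ is not incident to $u$, deleting $e$ leaves exactly two components, and $u$ lies in exactly one of them. Relabel the endpoints so that $v$ is in the component containing $u$ and $w$ is in the other one; this is always possible, since each component contains exactly one endpoint of $e$. We then take $x=w$ and $T'=T-vw+uw$.

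First we check that $T'$ is a tree. The new edge $uw$ joins the two components of $T-e$, so $T'$ is connected with $n-1$ edges, hence a tree. In particular $uw\notin E(T)$. Indeed, if $uw$ were already an edge of $T$, then $u$ and $w$ would lie in the same component of $T-e$, because $uw\neq e$ given $u\neq v$. That contradicts the choice of labelling. So $u$ is not adjacent to $w$ in $T$, and also $u\neq w$, because $e$ is not incident to $u$.

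Next we verify the hypotheses of \cref{lem: ERL} with the triple $(u,v,w)$. We need $vw\in E(T)$, which holds by assumption. We need $u$ not adjacent to $w$, which was just shown. We need $\Zv(u)\geq \Zv(v)$, which holds because $u$ is principal, so $\Zv(u)=\max_y \Zv(y)$. The lemma then gives $\lambda_1(T')>\lambda_1(T)$, which is the claim.

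There is no real obstacle. The only point needing care is the choice of $x$: rotating the "wrong" endpoint, that is $T-vw+uv$ when $v$ is on $u$'s side, could create a cycle and disconnect the tree. Choosing $x$ as the endpoint in the component not containing $u$ settles both the tree property and the non-adjacency hypothesis of \cref{lem: ERL} at once.
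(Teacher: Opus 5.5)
Your proposal is correct and is essentially the paper's proof: delete $e$, join $u$ to the endpoint of $e$ lying in the component of $T-e$ not containing $u$, and apply the edge-rotation lemma (\cref{lem: ERL}) using that $u$ is principal. You are slightly more careful than the paper in two places: you name the inequality that is actually needed, $\Zv(u)\ge\Zv(v)$ for the endpoint that loses the edge (the paper writes $\Zv(u)\ge\Zv(x)$, which is harmless since both inequalities hold), and you check the non-adjacency hypothesis of \cref{lem: ERL} explicitly.
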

\begin{proof}
    Since $T$ is a tree, deleting $e$ results in a forest with exactly two connected components, say $T_1$ and $T_2$, where  $v\in V(T_1)$ and $w\in V(T_2)$.  If $u\in V(T_1)$, then adding the edge $uw$ reconnects the two components and results in a tree. If $u\in V(T_2)$ then adding the edge $uv$ does so. In either case, there exists a unique $x\in\{v,w\}$ such that the graph
    \[T' = T - e + ux\]
    is a tree.
    Let $\Zv$ be the Perron vector of $T$. Since $u$ is a principal vertex, we have $\Zv(u)\geq\Zv(v)$ and $\Zv(u)\geq\Zv(w)$. In particular $\Zv(u)\ge\Zv(x) $. thus by \cref{lem: ERL} we have
    \[\lambda_1(T')>\lambda_1(T). \qedhere \]
\end{proof}
Thus, to determine whether a tree is sustainable, it remains only to check the edges incident to all principal vertices. Thus, the question whether a tree is sustainable, reduces to verifying if the edges incident to the principal vertices are sustainable.

We obtain the following as immediate consequences.

\begin{corollary}\label{cor: multiple principal vertices is sustainable}
    Let $T$ be a tree and $U$ be the set of principal vertices.
    \begin{enumerate}[(a)]
        \item If $U=\{u\}$ and every edge $uw$, where $w\in N(u)$, is sustainable, then $T$ is sustainable.
        \item If $U=\{u,v\}$ and $uv$ is a sustainable edge, then $T$ is sustainable.
        \item If $U=\{u,v\}$ and $u,v$ are not adjacent,  then $T$ is sustainable.
        \item If $|U| \geq 3$,  then $T$ is sustainable.
    \end{enumerate}
\end{corollary}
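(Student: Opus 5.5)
The plan is to reduce everything to \cref{lem:ERLsustainable}. That lemma says every edge not incident to a given principal vertex is sustainable, and in fact can be replaced with a strict increase of $\lambda_1$. So in each case it suffices to show that each remaining edge, namely each edge incident to every principal vertex, is sustainable. I would treat the four parts in order.

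For (a), let $U=\{u\}$. Any edge $e$ of $T$ is either incident to $u$ or not. If it is not, \cref{lem:ERLsustainable} applied to the principal vertex $u$ makes $e$ sustainable. If it is, then $e=uw$ with $w\in N(u)$, and it is sustainable by hypothesis. Hence every edge is sustainable.

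For (b), let $U=\{u,v\}$. An edge $e$ that fails to be incident to $u$ is sustainable by \cref{lem:ERLsustainable} with principal vertex $u$. Likewise, an edge not incident to $v$ is sustainable using $v$. The only edges left are those incident to both $u$ and $v$. Since $T$ is simple, the only candidate is the edge $uv$ itself, if it exists, and it is sustainable by hypothesis. For (c) the same argument works, but now $u$ and $v$ are not adjacent, so no edge is incident to both and nothing remains to check.

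For (d), take three distinct principal vertices $u_1,u_2,u_3$. An edge is incident to exactly two vertices, so every edge $e$ misses at least one $u_i$. Applying \cref{lem:ERLsustainable} with that $u_i$ shows that $e$ is sustainable.

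The only point needing care is that \cref{lem:ERLsustainable} may be applied with any principal vertex, not a fixed one. This holds because its proof uses only $\Zv(u)\ge\Zv(x)$, which is true for every principal $u$. There is no real obstacle; the argument is bookkeeping of which edges avoid which principal vertex.
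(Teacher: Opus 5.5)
Your proof is correct and is essentially the paper's argument: the paper likewise observes that, by \cref{lem:ERLsustainable}, any edge missing some principal vertex is sustainable, and all four parts then follow by checking which edges are incident to every principal vertex. Your closing caveat is unnecessary, since \cref{lem:ERLsustainable} is already stated for an arbitrary principal vertex $u$.
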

\begin{proof}
   By \cref{lem:ERLsustainable}, if there exists a principal vertex $x$ not incident to $e$, then replacing $e$ with an edge incident to an endpoint of $e$ and $x$ results in a graph with higher spectral radius. All four statements follow directly.
\end{proof}

\begin{lemma}\label{lem: incident degree 2 is sustainable}
    Let $T$ be a tree and suppose that $e$ is an edge that is incident to a vertex of degree 2, then $e$ is sustainable.
\end{lemma}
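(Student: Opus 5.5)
Let $e=vw$ with $\deg(v)=2$, and let $x$ be the other neighbour of $v$, so $N(v)=\{w,x\}$. The plan is to exhibit one explicit replacement edge $f$ for which $T-e+f$ is a tree isomorphic to $T$. Then $\lambda_1(T-e+f)=\lambda_1(T)$, and $e$ is sustainable by definition. No spectral estimate is needed; the whole argument is a reconnection that realises a graph automorphism-type swap.

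Deleting $e$ splits $T$ into two components. Let $T_w$ be the one containing $w$ and $T_v$ the one containing $v$; the latter also contains $x$ and everything hanging off $x$ away from $v$. I choose $f=wx$. This joins $T_w$ to $T_v$, so $T'=T-e+wx$ is a tree. We need $w\neq x$, which is automatic, and $wx\notin E(T)$, which holds because otherwise $v,w,x$ would form a triangle.

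I then compare $T$ and $T'$. In $T$, the vertex $v$ sits on the path $w - v - x$ as an internal vertex of degree $2$. In $T'$, $w$ is adjacent to $x$, and $v$ is a leaf attached to $x$. These are in general not isomorphic, so the plan needs adjusting. I would use the other natural choice instead: reattach $T_w$ with $w$ joined to a neighbour $y\neq v$ of $x$, or choose between $f=wx$ and its mirror by symmetry. That is fragile, so the cleaner route is to pass through subdivision and contraction.

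Concretely, compare $T$ with $T''=T-vw+wx$. The tree $T$ is obtained from $T''$ by a generalized tree shift in the appropriate direction. In $T''$, take the path $v$–$x$; it has no internal vertices, and $v$ is a leaf. The shift from $x$ to $v$ (beneficiary $v$, candidate $x$) moves all neighbours of $x$ other than $v$ onto $v$. That does not give $T$ either.

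So instead I would take $f$ to be the edge joining $w$ to $x$ and argue via \cref{thm: GTS increasing radius} in reverse. Here $T$ is a generalized tree shift of $T'$: in $T'$, take candidate $x$ and beneficiary $v$ with only the branch through $w$ moved. This is exactly where the argument is not straightforward. The main obstacle is identifying the correct $f$ so that either $T'\cong T$ or $T'$ is a GTS image that dominates $T$. I expect the intended proof to use the isomorphism $T-vw+wx\cong T$ after relabelling: $v$ becomes the leaf and $x$ takes its place. I would first verify this on small paths and brooms before committing to it.
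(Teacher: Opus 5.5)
There is a genuine gap. Your first choice, $f=wx$, is the right replacement edge, but you never give a valid reason why $\lambda_1(T-vw+wx)\ge\lambda_1(T)$, and the reasons you consider do not work.

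\begin{enumerate}[(i)]
\item The isomorphism you expect at the end is false in general. Take $T=P_4$ with vertices $a,b,c,d$ in order, and set $v=b$, $w=a$, $x=c$. Then $T-ba+ac\cong K_{1,3}\not\cong P_4$. In fact $T-vw+wx$ is $T$ with the edge $vx$ contracted and a leaf added, so it is isomorphic to $T$ only in special cases, such as when $x$ is a leaf.
\item Your generalized-tree-shift attempts run in the wrong direction. If $T$ were a generalized tree shift of $T'$, then \cref{thm: GTS increasing radius} would give $\lambda_1(T)\ge\lambda_1(T')$, which is the opposite of what sustainability requires.
\item A shift with candidate $x$ must move all of $N(x)$ except the path neighbour. ``Moving only the branch through $w$'' is therefore not a generalized tree shift at all.
\end{enumerate}

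The missing observation is that $T'=T-vw+xw$ is itself a generalized tree shift of $T$, with candidate $v$ and beneficiary $x$.
\begin{itemize}
\item The $x$--$v$ path is the single edge $xv$, so the condition on internal vertices is vacuous.
\item The neighbour of $v$ on this path is $x$.
\item Since $\deg(v)=2$, we have $N(v)\setminus\{x\}=\{w\}$.
\end{itemize}
So the shift deletes exactly $vw$ and adds exactly $xw$. Then \cref{thm: GTS increasing radius} gives $\lambda_1(T')\ge\lambda_1(T)$, with strict inequality unless $T'\cong T$. In the $P_4$ example this is $\sqrt3>\lambda_1(P_4)$.

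This one-line argument is the paper's entire proof. The degree-$2$ hypothesis is exactly what makes the shift move a single edge, so that it counts as an admissible one-edge replacement. Your checks that $T'$ is a tree and that $wx\notin E(T)$ are correct and remain part of the argument.
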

\begin{proof}
    Let $u$ be a vertex of degree 2 with neighbours $v$ and $w$, and let $e=uv$.  Let $u$ be the candidate and $w$ the beneficiary of the generalized tree shift. This operation replaces the edge $uv$ by $vw$, producing a tree $T'$. By Theorem~\ref{thm: GTS increasing radius}, $T'$ has as large spectral radius as $T$. Thus $e$ is sustainable.
\end{proof}

\subsection{Examples of sustainable trees}
As mentioned before, the path graph on $n$ vertices, $P_n$, is sustainable. In the examples below, we give two other families of trees which are sustainable.

\begin{example}
    The graph $\widehat{D}_n$ for $n\geq 6$ is seen in \cref{fig:Dynkin graph}, note that the number of vertices is one more than the index given. The Perron vector of this graph is given by
    \[
    \Zv(u)=\begin{cases}
        1, & \text{if $u$ is a leaf;}\\
        2, & \text{otherwise.}
    \end{cases}
    \]

    For $n\geq 6$, the tree has at least three non-leaf vertices and therefore at least three principal vertices. By \cref{cor: multiple principal vertices is sustainable}, the tree is sustainable.

    \begin{figure}[htb]
        \centering
        \begin{tikzpicture}[main_node/.style={circle,draw,minimum size=0.8em,inner sep=3pt}]

            \node[main_node] (0) at (-0.5, 1) {};
            \node[main_node] (1) at (-0.5, -1) {};
            \node[main_node] (2) at (0,0) {};
            \node[main_node] (3) at (1,0) {};
            \node[main_node] (4) at (3,0) {};
            \node[main_node] (5) at (4,0) {};
            \node[main_node] (6) at (4.5, 1) {};
            \node[main_node] (7) at (4.5, -1) {};

             \path[draw, thick]
            (0) edge node {} (2)
            (2) edge node {} (1)
            (2) edge node {} (3)
            (3) edge[dotted] node {} (4)
            (4) edge node {} (5)
            (5) edge node {} (6)
            (5) edge node {} (7)
            ;

        \end{tikzpicture}
        \caption{The graph $\widehat{D}_n$, where the number of vertices is $n+1$.}
        \label{fig:Dynkin graph}
    \end{figure}
\end{example}

\begin{example}\label{ex: starlike sustainable with n1 at least 2}
    Let $k\geq 3$ be an integer and $n_1\leq\dots\leq n_k$ be strictly positive integers. A starlike tree, denoted $S(n_1,\dots,n_k)$, is a tree with exactly one vertex of degree $k$, say $u$, such that
    \[
    S(n_1,\dots,n_k)-u\cong P_{n_1}\cup\dots\cup P_{n_k}.
    \]
    An example of a starlike tree is shown in \cref{fig:Starlike tree}. If $n_i\geq 2$ for all $i$, then every edge is incident to a vertex of degree two; hence, by \cref{lem: incident degree 2 is sustainable}, the tree is sustainable.
    \begin{figure}[h!]
        \centering
        \begin{tikzpicture}[main_node/.style={circle,draw,minimum size=0.8em,inner sep=3pt]}]
            \node[main_node] (0) at (0,0) {};
            \node[main_node] (1) at (0,1) {};
            \node[main_node] (2) at (-1,0) {};
            \node[main_node] (3) at (1,0) {};
            \node[main_node] (4) at (2,0) {};
            \node[main_node] (5) at (3,0) {};

             \path[draw, thick]
            (0) edge node {} (1)
            (0) edge node {} (2)
            (0) edge node {} (3)
            (3) edge[dotted] node {} (4)
            (4) edge node {} (5)
            ;

            \end{tikzpicture}
        \caption{The starlike tree $S(1,1,k)$, also known as $D_{k+3}$ for $k\geq 2$.}
        \label{fig:Starlike tree}
    \end{figure}

\end{example}

\subsection{Examples of non-sustainable trees}
As mentioned before the star graphs are non-sustainable on at least 4 vertices. In this section, we give other families of trees which are non-sustainable.
\begin{example}\label{ex: double star non-sustainable}
    Let $k,m\geq 2$ be integers. The double star graph, denoted $S_{k,m}$ is a tree obtained by adding an edge between the degree $k$ and degree $m$ vertex of the graphs $S_k$ and $S_m$ and we say it is balanced if $k=m$. Equivalently, it can be viewed as the double broom $B(1,k+1,m+1)$. A graphical example is shown in \cref{fig:Double star tree}. The edge between the two stars is non-sustainable. In \cref{fig:non-sustainable edge double star}, all possible replacements of this edge are shown up to isomorphism. Applying the generalized tree shift one or more times yields the double star graph $S_{k,m}$. By \cref{thm: GTS increasing radius}, it follows that all trees in \cref{fig:non-sustainable edge double star} have strictly smaller spectral radius. Consequently, $S_{k,m}$ is non-sustainable.
    \begin{figure}
        \centering
        \begin{tikzpicture}[main_node/.style={circle,draw,minimum size=0.8em,inner sep=3pt]},scale=0.85]

        \node[main_node] (0) at (-1,1.5) {};
        \node[left, xshift=-4pt] at (0) {$1$};

        \node[main_node] (1) at (-1,0.7) {};
        \node[left, xshift=-4pt] at (1) {$2$};

        \node[main_node] (2) at (-1,0) {};
        \node[left, xshift=-4pt] at (2) {$3$};

        \node[main_node] (3) at (-1,-1.5) {};
        \node[left, xshift=-4pt] at (3) {$k$};

        \node[main_node] (4) at (0,0) {};
        \node[below, yshift=-4pt] at (4) {$u$};

        \node[main_node] (5) at (1,0) {};
        \node[below, yshift=-4pt] at (5) {$v$};

        \node[main_node] (6) at (2,1.5) {};
        \node[right, xshift=4pt] at (6) {$k+1$};
        \node[main_node] (7) at (2,0.7) {};
        \node[right, xshift=4pt] at (7) {$k+2$};
        \node[main_node] (8) at (2,0) {};
        \node[right, xshift=4pt] at (8) {$k+3$};
        \node[main_node] (9) at (2,-1.5) {};
        \node[right, xshift=4pt] at (9) {$k+m$};

         \path[draw, thick]
        (0) edge node {} (4)
        (1) edge node {} (4)
        (2) edge node {} (4)
        (4) edge node {} (5)
        (5) edge node {} (6)
        (5) edge node {} (7)
        (5) edge node {} (8)
        (4) edge node {} (3)
        (5) edge node {} (9)
        ;
        \path[draw, thick, dash pattern=on 1pt off 8pt]
        (2) edge node {} (3)
        (8) edge node {} (9);
        \end{tikzpicture}
        \caption{The double star graph $S_{k,m}$}
        \label{fig:Double star tree}
    \end{figure}

    \begin{figure}[htb]
        \centering
        \begin{tikzpicture}[main_node/.style={circle,draw,minimum size=0.8em,inner sep=3pt},scale=0.85]

        \begin{scope}
            \node[main_node] (0) at (-1,1.5) {};
            \node[left, xshift=-4pt] at (0) {$1$};
            \node[main_node] (1) at (-1,0.7) {};
            \node[left, xshift=-4pt] at (1) {$2$};
            \node[main_node] (2) at (-1,0) {};
            \node[left, xshift=-4pt] at (2) {$3$};
            \node[main_node] (3) at (-1,-2) {};
            \node[left, xshift=-4pt] at (3) {$k$};
            \node[main_node] (4) at (0,0) {};
            \node[below, yshift=-4pt] at (4) {$u$};
            \node[main_node] (5) at (1,0) {};
            \node[below, yshift=-4pt] at (5) {$v$};
            \node[main_node] (6) at (2,1.5) {};
            \node[right, xshift=4pt] at (6) {$k+1$};
            \node[main_node] (7) at (2,0.7) {};
            \node[right, xshift=4pt] at (7) {$k+2$};
            \node[main_node] (8) at (2,0) {};
            \node[right, xshift=4pt] at (8) {$k+3$};
            \node[main_node] (9) at (2,-2) {};
            \node[right, xshift=4pt] at (9) {$k+m$};

            \path[draw, thick]
            (0) edge (4)
            (1) edge (4)
            (2) edge (4)
            (4) edge (6)
            (5) edge (6)
            (5) edge (7)
            (5) edge (8)
            (4) edge (3)
            (5) edge (9);
        \end{scope}

        \begin{scope}[xshift=6cm]
            \node[main_node] (10) at (-1,1.5) {};
            \node[left, xshift=-4pt] at (10) {$1$};
            \node[main_node] (11) at (-1,0.7) {};
            \node[left, xshift=-4pt] at (11) {$2$};
            \node[main_node] (12) at (-1,0) {};
            \node[left, xshift=-4pt] at (12) {$3$};
            \node[main_node] (13) at (-1,-2) {};
            \node[left, xshift=-4pt] at (13) {$k$};
            \node[main_node] (14) at (0,0) {};
            \node[below, yshift=-4pt] at (14) {$u$};
            \node[main_node] (15) at (1,0) {};
            \node[below, yshift=-4pt] at (15) {$v$};
            \node[main_node] (16) at (2,1.5) {};
            \node[right, xshift=4pt] at (16) {$k+1$};
            \node[main_node] (17) at (2,0.7) {};
            \node[right, xshift=4pt] at (17) {$k+2$};
            \node[main_node] (18) at (2,0) {};
            \node[right, xshift=4pt] at (18) {$k+3$};
            \node[main_node] (19) at (2,-2) {};
            \node[right, xshift=4pt] at (19) {$k+m$};

            \path[draw, thick]
            (10) edge (14)
            (11) edge (14)
            (12) edge (14)
            (10) edge (15)
            (15) edge (16)
            (15) edge (17)
            (15) edge (18)
            (14) edge (13)
            (15) edge (19);
        \end{scope}

        \begin{scope}[xshift=12cm]
            \node[main_node] (20) at (-1,1.5) {};
            \node[left, xshift=-4pt] at (20) {$1$};
            \node[main_node] (21) at (-1,0.7) {};
            \node[left, xshift=-4pt] at (21) {$2$};
            \node[main_node] (22) at (-1,0) {};
            \node[left, xshift=-4pt] at (22) {$3$};
            \node[main_node] (23) at (-1,-2) {};
            \node[left, xshift=-4pt] at (23) {$k$};
            \node[main_node] (24) at (0,0) {};
            \node[below, yshift=-4pt] at (24) {$u$};
            \node[main_node] (25) at (1,0) {};
            \node[below, yshift=-4pt] at (25) {$v$};
            \node[main_node] (26) at (2,1.5) {};
            \node[right, xshift=4pt] at (26) {$k+1$};
            \node[main_node] (27) at (2,0.7) {};
            \node[right, xshift=4pt] at (27) {$k+2$};
            \node[main_node] (28) at (2,0) {};
            \node[right, xshift=4pt] at (28) {$k+3$};
            \node[main_node] (29) at (2,-2) {};
            \node[right, xshift=4pt] at (29) {$k+4$};

            \path[draw, thick]
            (20) edge (24)
            (21) edge (24)
            (22) edge (24)
            (20) edge (26)
            (25) edge (26)
            (25) edge (27)
            (25) edge (28)
            (24) edge (23)
            (25) edge (29);
        \end{scope}

        \path[draw, thick, dash pattern=on 1pt off 8pt]
        (2) edge (3)
        (8) edge (9)

        (12) edge (13)
        (18) edge (19)

        (22) edge (23)
        (28) edge (29)
        ;

        \end{tikzpicture}
        \caption{All the representations to replace the edge between the two star graphs up to isomorphism.}
        \label{fig:non-sustainable edge double star}
    \end{figure}

\end{example}

The following lemma will help us show that another family of trees is non-sustainable.
\begin{lemma}\cite[Lemma 3.5.4]{Stev2018}\label{lemma: equal path lengths}
    Let $u$ be a vertex of a nontrivial connected graph $X$ and, for nonnegative integers $k$ and $\ell$, let $X(u; \ell, k)$ denote the graph obtained from $X$ by attaching pendent paths of lengths $\ell$ and $k$ at $u$. If $k\geq \ell\geq 1$, then
    \[\lambda_1(X(u; \ell,k))>\lambda_1(X(u; \ell-1,k+1)).\]
\end{lemma}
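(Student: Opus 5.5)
The plan is to compare characteristic polynomials. Write $G=X(u;\ell,k)$ and $H=X(u;\ell-1,k+1)$, and let $P_0$ denote the empty graph, with $\phi(P_0,t)=1$ and $\phi(P_{-1},t)=0$. The target is an identity of the form
\[
\phi(G,t)-\phi(H,t)=-\,\phi(X-u,t)\,\phi(P_{k-\ell},t).
\]
This should be the classical Li--Feng type identity. Its sign and the exact index are the points I expect to need care with.

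To derive it, I would apply the bridge form of \cref{prop:charpoly-tree-recurrence}: for a cut edge $xy$, $\phi(Y)=\phi(Y-xy)-\phi(Y-x-y)$. This holds for any graph, since it follows from expanding along a bridge. I would apply it at the edge joining $u$ to the first vertex of each attached path. This expresses $\phi(G)$ and $\phi(H)$ through the quantities $\phi(X)$, $\phi(X-u)$ and $\phi(P_a)$. Subtracting, the difference becomes
\[
\bigl(\phi(X)-t\phi(X-u)\bigr)\cdot\bigl(\text{a combination that cancels}\bigr)+\phi(X-u)\bigl(\phi(P_{\ell-1})\phi(P_{k})-\phi(P_{\ell-2})\phi(P_{k+1})\bigr)\cdot(\pm1).
\]
I would then show the first combination vanishes and simplify the bracket using the path identity
\[
\phi(P_a)\phi(P_b)-\phi(P_{a-1})\phi(P_{b+1})=\phi(P_{b-a}),\qquad b\ge a\ge 0,
\]
which I would prove by induction on $a$ from $\phi(P_{m+1})=t\phi(P_m)-\phi(P_{m-1})$. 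This bookkeeping is the main obstacle. It is routine but sign- and index-sensitive, so I would sanity-check the final identity on a small case, for instance $X=K_2$ with $\ell=k=1$, where $G=P_5$ and $H$ is the starlike tree $S(1,1,2)$.

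Once the identity is established, evaluate at $\theta=\lambda_1(H)$. Here $\phi(H,\theta)=0$, so $\phi(G,\theta)=-\phi(X-u,\theta)\phi(P_{k-\ell},\theta)$. Since $X$ is nontrivial and connected, $X-u$ is nonempty. Every component of $X-u$, and $P_{k-\ell}$ (sitting inside the pendent path of length $k+1$), is a proper subgraph of $H$. By \cref{thm:spectral-radius-monotonicity} all their eigenvalues are less than $\theta$, so both characteristic polynomials are positive at $\theta$; when $k=\ell$ the second factor is simply $1$. Hence $\phi(G,\theta)<0$.

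Since $\phi(G,t)\to+\infty$ as $t\to\infty$, $\phi(G,\cdot)$ has a root larger than $\theta$. Therefore $\lambda_1(G)>\lambda_1(H)$, as claimed.
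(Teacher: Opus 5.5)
The paper does not prove this lemma; it quotes it from Stevanovi\'c's book, so your argument has to stand on its own. Your strategy is the standard Li--Feng one: compare $\phi(G)$ with $\phi(H)$ and evaluate at $\theta=\lambda_1(H)$. It can be made to work, but the identity you aim for is false.

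Take $X=K_2$ and $\ell=k=1$. Then $G=K_{1,3}$ and $H=P_4$, not $P_5$ and $S(1,1,2)$ as you wrote. (Those two arise from $X=P_3$ rooted at an end, and then $G=S(1,1,2)$, $H=P_5$.) For $X=K_2$,
\[
\phi(G,t)-\phi(H,t)=(t^4-3t^2)-(t^4-3t^2+1)=-1\neq -t=-\phi(X-u,t)\,\phi(P_0,t).
\]
Even more simply, for $X=K_1$ your bridge computation is still valid and $G\cong H\cong P_{k+\ell+1}$, yet your right-hand side is $-\phi(P_{k-\ell})\neq 0$.

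If you carry out the bookkeeping, the bridge formula gives
\[
\phi(X(u;a,b))=\phi(X)\phi(P_a)\phi(P_b)-\phi(X-u)\bigl(\phi(P_{a-1})\phi(P_b)+\phi(P_a)\phi(P_{b-1})\bigr).
\]
Your path identity then yields
\[
\phi(G,t)-\phi(H,t)=\bigl(\phi(X,t)-t\,\phi(X-u,t)\bigr)\,\phi(P_{k-\ell},t).
\]
So in your decomposition it is the $\phi(X-u)$-part that cancels. The coefficient of $\phi(X)-t\phi(X-u)$ is exactly $\phi(P_{k-\ell})$, and it survives. This is the reverse of your plan; note also that your second bracket would equal $\phi(P_{k-\ell+1})$, not $\phi(P_{k-\ell})$.

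Consequently your positivity step must be replaced. Knowing $\phi(X-u,\theta)>0$ is not enough; you need $\phi(X,\theta)-\theta\,\phi(X-u,\theta)<0$. This is true but requires an argument. Since $X$ is a proper subgraph of $H$, we have $\theta>\lambda_1(X)$, so $\phi(X,\theta)>0$. Apply \cref{prop:walk-generating-function} with $v=u$; the only $u$--$u$ path is the trivial one, so
\[
\frac{\phi(X-u,\theta)}{\phi(X,\theta)}=\theta^{-1}\sum_{\gamma\in W_u(X)}\theta^{-|\gamma|}\ \ge\ \theta^{-1}\bigl(1+\deg(u)\,\theta^{-2}\bigr)>\theta^{-1},
\]
because $\deg(u)\ge 1$. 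This is where nontriviality of $X$ is genuinely used: for $X=K_1$ the factor $\phi(X)-t\phi(X-u)$ vanishes, consistent with $G\cong H$.

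Alternatively, expand $\phi(X)$ at $u$, summing over neighbours $v$ of $u$ and cycles $C$ through $u$:
\[
\phi(X)-t\phi(X-u)=-\sum_{v\sim u}\phi(X-u-v)-2\sum_{C}\phi(X-V(C)),
\]
and each term on the right is positive at $\theta$. With this and $\phi(P_{k-\ell},\theta)>0$, you get $\phi(G,\theta)<0$. Your concluding step (a root of $\phi(G,\cdot)$ beyond $\theta$) then goes through unchanged.
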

If we take $X = P_2$, then $X(u; \ell,k) = S(1, \ell, k)$. The lemma implies that redistributing length by shortening the shorter pendent path and extending the longer one by one, i.e.,  $S(1, \ell-1, k+1)$, strictly decreases the spectral radius. Thus, for fixed $n_2+n_3$, the spectral radius of $S(1,n_2,n_3)$ is maximized when the two lengths are as equal as possible; equivalently, when $|n_2-n_3|\leq1$. With this, we can give the full characterization of sustainability of the family $S(n_1,\dots,n_k)$. Note that in \cref{ex: starlike sustainable with n1 at least 2}, we have shown that if $n_i\geq 2$ for all $i$, then $S(n_1,\dots,n_k)$ is sustainable. We now proceed to resolve the case when $n_1=1$.
\begin{proposition}\label{prop: starlike trees}
    Let $1\le n_2\le n_3$. Then the graph $S(1,n_2,n_3)$ is sustainable if and only if $n_2\neq n_3$. More generally, for $k \geq 4$, every tree in the family $S(1,n_2,\dots,n_k)$ is non-sustainable.
\end{proposition}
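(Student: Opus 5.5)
The plan is to first reduce the problem to a small set of edges, then treat the two regimes separately. Let $u$ be the centre of $T=S(1,n_2,\dots,n_k)$. Every edge lying on a leg of length at least $2$ is incident to a vertex of degree $2$, so it is sustainable by \cref{lem: incident degree 2 is sustainable}. Hence the only edges to check are the edges $ux$ with $x$ a leaf adjacent to $u$, i.e.\ legs of length $1$.

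Deleting such an edge isolates $x$, so every admissible replacement reattaches $x$ as a leaf at some vertex $w\neq u$ of $T_0=T-x$. Thus we must compare $\lambda_1(T)$ with $\lambda_1(T_0+xw)$, where $w$ ranges over the vertices of the legs.

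For $k=3$, let $T=S(1,n_2,n_3)$ and let $w$ be at distance $j\ge 1$ from $u$ on a leg of length $m\in\{n_2,n_3\}$. Let $m'$ be the length of the other leg. After deleting $ux$, the vertex $u$ has degree $2$, so $T_0$ is a path.
\begin{itemize}
\item If $w$ is the end of a leg, the new tree is $P_n$, which is minimal, so the spectral radius drops.
\item Otherwise the new tree is $S(1,\,m-j,\,m'+j)$, centred at $w$. Its two long legs have the same total length $n_2+n_3$ as in $T$.
\end{itemize}
If $n_2=n_3$, every such redistribution is unbalanced. By \cref{lemma: equal path lengths} (the balanced split uniquely maximizes $\lambda_1$), every replacement strictly decreases the spectral radius, so $ux$ is non-sustainable. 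If $n_2<n_3$, choose $w$ as the neighbour of $u$ on the $n_3$-leg, which gives $S(1,n_2+1,n_3-1)$. This tree is either isomorphic to $T$ (when $n_3=n_2+1$), or it is more balanced, in which case \cref{lemma: equal path lengths} gives a strictly larger spectral radius. When $n_2=1$ the other leaf edge at $u$ has the same shape, so the same choice works for it. This proves the first statement.

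For $k\ge 4$, we show that the edge $ux$ is non-sustainable. After deleting $ux$ and adding $xw$, the vertex $u$ still has degree $k-1\ge 3$ in $T'=T_0+xw$. We aim to prove $\mathbf v'(u)\ge \mathbf v'(w)$, where $\mathbf v'$ is the Perron vector of $T'$. Granting this, \cref{lem: ERL} applied to $T'$, rotating the edge $wx$ to $ux$, gives $\lambda_1(T)>\lambda_1(T')$ for every choice of $w$, as required.

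The case where $w$ is a leg end is easy. Then $T'$ is again starlike with centre $u$. On a pendant path, $\lambda_1>2$ and the recursion $\mathbf v(y_{i+1})=\lambda_1\mathbf v(y_i)-\mathbf v(y_{i-1})$ force the entries to decrease away from the centre, so $\mathbf v'(u)\ge\mathbf v'(w)$.

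The main obstacle is the internal case, where $w$ becomes a second branch vertex of degree $3$. First I would try a walk-counting or monotone-recursion argument along the $u$--$w$ path. The idea is to compare the branch at $u$, consisting of at least two pendant legs plus the path toward $w$, with the branch at $w$, consisting of one leaf plus the remainder of the leg. This should give the inequality between $\mathbf v'(w)$ and its neighbour toward $u$, propagated step by step along the path.

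If that fails, I would use the characteristic polynomial instead. By \cref{prop:charpoly-tree-recurrence},
\[
\phi(T,t)=t\phi(T_0,t)-\phi(T_0-u,t),\qquad \phi(T',t)=t\phi(T_0,t)-\phi(T_0-w,t).
\]
So it suffices to show $\phi(T_0-w,t)<\phi(T_0-u,t)$ for $t\ge\lambda_1(T)$. Here $T_0-u$ is a disjoint union of paths, while $T_0-w$ contains a component with a vertex of degree at least $3$. I would compare the two forests via \cref{lem: cofactor characterstic} or by GTS-type path manipulations.
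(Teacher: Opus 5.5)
Your argument for $S(1,n_2,n_3)$ is correct and matches the paper's. The gap is in the case $k\ge 4$. The internal case is only sketched, and beyond that, the inequality you intend to prove there, $\mathbf v'(u)\ge\mathbf v'(w)$ for every $w$, is false in general. So no walk-counting or recursion along the $u$--$w$ path can establish it. Take $T=S(1,1,1,3)$, with leaves $a,b,x$ at $u$ and long leg $u\,c_1\,c_2\,c_3$, and let $w=c_1$. Write $\lambda=\lambda_1(T')$ for $T'=T-ux+xc_1$. The eigenvalue equation at $u$ is $\lambda\mathbf v'(u)=2\mathbf v'(u)/\lambda+\mathbf v'(c_1)$, so $\mathbf v'(c_1)/\mathbf v'(u)=\lambda-2/\lambda$, and this exceeds $1$ exactly when $\lambda>2$. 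Indeed $\lambda>2$, because $T'$ properly contains $B(1,3,3)$, whose spectral radius is $2$. Numerically, $\lambda_1(T')\approx 2.053$ and $\mathbf v'(c_1)/\mathbf v'(u)\approx 1.08$, while $\lambda_1(T)=\sqrt{3+\sqrt2}\approx 2.101$. The same failure occurs whenever the new leaf, together with a remaining leg of length at least $2$ at $w$, outweighs what is left at $u$. Rotating $xw$ back to $ux$ by \cref{lem: ERL} is therefore not the mechanism behind the result. Your fallback via \cref{prop:charpoly-tree-recurrence} is a valid reformulation. However, comparing $\phi(T_0-u,t)$ (a product of $k-1$ path polynomials) with $\phi(T_0-w,t)$ (a path times a starlike tree) at $t=\lambda_1(T)$ is the entire content of the claim, and you have not carried it out.

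The paper avoids Perron-vector comparisons in the internal case. Let $P$ be the $u$--$w$ path in $T'$ and $\ell$ the leaf at the end of the leg through $w$. Contracting the edges of $P$ one at a time, while each time extending the pendent path at $\ell$ by one vertex, returns a copy of $T$. Each step is an edge teleportation (\cref{lem: contracting plus adding leaf}). The contracted edge always lies between two vertices of degree at least $3$, so it is on no pendent path, and $\lambda_1(T')\ge 2$ because $T'$ contains $B(d(u,w),3,3)$. Hence $\lambda_1(T')<\lambda_1(T)$. In the leg-end case the paper uses a single generalized tree shift with beneficiary $u$, which turns $T'$ into a copy of $T$. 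Your justification of that case ($\lambda_1>2$) fails for $k=4$. For example, $T=S(1,1,1,m)$ with $x$ moved to the end of the long leg gives $T'=D_{m+4}$, whose spectral radius is below $2$. In that regime the pendant-path recursion alone does not force the entries to decrease away from the centre, which is why the GTS argument is the cleaner route.
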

\begin{proof}
    Let $u$ be the vertex of degree 3, and let $v$ be a leaf adjacent to $u$. We first show that the edge $uv$ is non-sustainable if and only if $n_2=n_3$. The tree $S(1, n_2, n_3)$ can be obtained by taking $X=P_2$ and $\ell=n_2$ and $k=n_3$ in \cref{lemma: equal path lengths}. Replacing $uv$ by $vw$, with $w\neq u$ and $\deg(w)\neq 1$, results in $S(1, n_2', n_3')$. If $n_2=n_3$, then we have $n_2'<n_2$ and $n_3'>n_3$ and hence,
    \cref{lemma: equal path lengths} implies
    \[\lambda_1(S(1, n_2, n_3))>\lambda_1(S(1, n_2', n_3')).\]
    If $\deg(w)=1$, then we obtain $P_{n+1}$ and we know $\lambda_1(P_\ell)<\lambda_1(T)$ for any tree $T\ncong P_\ell$ on $\ell$ vertices. Thus the edge $uv$ and by definition, the tree $S(1,n_2,n_3)$ is non-sustainable if $n_2=n_3$. We will now show that if $n_2\neq n_3$, then $S(1, n_2, n_3)$ is sustainable. Let $u$ and $v$ as before. If $n_2 < n_3-1$, the edge $uv$ is sustainable since \cref{lemma: equal path lengths} implies
    \[
    \lambda_1(S(1,n_2',n_3')) > \lambda_1(S(1,n_2,n_3)),
    \]
    where $n_2'=n_2+1$ and $n_3'=n_3-1$.
    If $n_2 = n_3-1$, then $S(1, n_2+1,n_3-1) \cong S(1,n_2,n_3)$, thus the edge $uv$ is again sustainable. It remains to consider the other edges of $S(1, n_2, n_3)$ when $n_2\neq n_3$. If $n_2,n_3>1$ then all other edges of $S(1,n_2,n_3)$ are incident to a vertex of degree 2, and are therefore sustainable by \cref{lem: incident degree 2 is sustainable}. If $n_2=1$ and $n_3\neq n_2$, let $w$ be the other leaf adjacent to $u$. We have already showed that the edge $uv$ is sustainable, and the remaining edge incident to $u$ and a leaf is sustainable by symmetry. Thus, $S(1,n_2, n_3)$ is sustainable if and only if $n_2\neq n_3$.

    Let $k\geq 4, T=S(1,n_2,\dots, n_k)$ and let $u\in V(T)$ be of degree $k$ and $v$ be a leaf adjacent to $u$. To show that it is non-sustainable we show that $\lambda(T)>\lambda(T')$ for any $T'$, where $T'$ is obtained from $T$ by replacing the edge $uv$ with $vw$ for some $w\neq u$.
    Without loss of generality we may assume that $w$ is not a leaf of $T$. Indeed, if $w$ is attached to the $i$th pendent path, then applying a generalized tree shift on the edge on this path that is incident to $u$ we  obtain a tree isomorphic to $T$. This means that $\lambda(T')< \lambda(T)$ by \cref{thm: GTS increasing radius}.

    Now assume that $w$ is not a leaf of $T$ and let $\ell$ be the leaf, such that $w$ is on the path between $u$ and $\ell$. Also let $P$ be the path between $u$ and $w$ in $T'$. Observe that, by contracting all the edges of $P$ in $T'$ and attaching a path of the length $e(P)$ at $\ell$ we obtain a tree isomorphic to $T$. Thus by \cref{prop:contraction} we have that $\lambda(T')<\lambda(T)$.
\end{proof}

\begin{proposition}\label{prop: double star with pendant is non-sustainable}
    Let $k,m\geq 3$, and let $T$ be obtained from the double broom graph $B(1,k+1,m+1)$ by attaching a new leaf to one of the leaves adjacent to one of the two centers. Then $T$ is non-sustainable.
\end{proposition}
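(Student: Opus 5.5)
The plan is to show that the central edge $uv$ of $T$ is non-sustainable, where $u$ and $v$ are the two centers of the underlying double broom $B(1,k+1,m+1)$. Let $x$ be the leaf of $u$ that received the new leaf $y$, so that $u$ is adjacent to $v$, to $x$ and to $k-1$ further leaves, while $v$ is adjacent to $u$ and to $m$ leaves. Deleting $uv$ leaves two components, $A=\{u,x,y\}\cup\{\text{leaves of }u\}$ and $B=\{v\}\cup\{\text{leaves of }v\}$. Every admissible replacement is therefore an edge $ab$ with $a\in A$, $b\in B$ and $(a,b)\neq(u,v)$. Up to symmetry there are only finitely many types: $a\in\{u,\ \ell_u,\ x,\ y\}$, with $\ell_u$ a leaf of $u$, and $b\in\{v,\ \ell_v\}$, with $\ell_v$ a leaf of $v$. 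For each type I will show $\lambda_1(T')<\lambda_1(T)$ for $T'=T-uv+ab$.

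The tool will be edge teleportation (\cref{lem: contracting plus adding leaf}), used repeatedly, with GTS (\cref{thm: GTS increasing radius}) as a backup. In every $T'$ the distance from $u$ to $v$ is at least $2$, and the $u$–$v$ path $u=p_0,p_1,\dots,p_r=v$ has both endpoints carrying at least two further branches. This holds because $k,m\ge 3$, so $u$ keeps at least $k-1\ge 2$ leaves and $v$ keeps at least $m-1\ge 2$ leaves. Hence no edge of this path lies on a pendent path. Moreover $T'$ contains a vertex of degree at least $4$, so $\lambda_1(T')\ge 2$, and this persists along the chain.

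For each type I apply ET once for each edge by which the $u$–$v$ path is too long. Each step contracts an edge of the path and adds a leaf at a suitable vertex. The leaf goes back to $v$ when $b=\ell_v$ was absorbed into $v$, back to $u$ when a leaf $\ell_u$ of $u$ was absorbed, and it recreates $y$ at $x$ when $y$ or $x$ lay on the path. The outcome is a tree isomorphic to $T$. For example:
\begin{itemize}
\item If $T'=T-uv+u\ell_v$, contract $\ell_v v$ and add a leaf at the merged vertex.
\item If $T'=T-uv+yv$, the path is $u\,x\,y\,v$. Contract $yv$ and add a new leaf to $x$; this gives $u$–$x$–$v$ with $x$ carrying a pendant leaf. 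Then contract $xv$ and add a leaf at a leaf of $u$, which recreates the pendant $P_2$.
\end{itemize}
Each step strictly increases $\lambda_1$, so the chain gives $\lambda_1(T')<\lambda_1(T)$.

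The main obstacle is bookkeeping. In each of the roughly eight cases I must check two things: that the contracted edge is not on a pendent path at the moment it is contracted, and that the final tree really is isomorphic to $T$ rather than to a different double broom with a pendant. The second check is where the exact placement of the re-added leaves matters, in particular recreating the unique pendant path of length $2$ at $u$ rather than at $v$. If an ET step fails the non-pendent condition, for instance after the path has shrunk so that a degree-$2$ vertex sits next to a leaf, I will instead use a generalized tree shift along the degree-$2$ vertex. This works as in \cref{lem: incident degree 2 is sustainable} and \cref{ex: double star non-sustainable}: the shift moves the branches onto the correct center and yields $T$, and \cref{thm: GTS increasing radius} gives strict inequality, since the intermediate tree is not isomorphic to $T$.
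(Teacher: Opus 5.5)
Your plan of handling each of the seven replacement types by a chain of edge teleportations ending at $T$ can be made to work. However, the one nontrivial chain you actually write out lands on the wrong tree. In the case $T'=T-uv+yv$, the first step (contract $yv$, add a new leaf $z$ at $x$) correctly gives $T-uv+xv$, with $z$ in place of $y$. The second step contracts $xv$, which merges $x$ into $v$. The new center is then adjacent to $u$, to $z$ and to the $m$ old leaves, so it has $m+1$ leaves, while $u$ keeps only $k-1$ neighbours other than $v$. Adding a leaf at a leaf of $u$ produces the tree obtained from $B(1,k,m+2)$ by attaching a pendant vertex to a leaf of $u$. Its unique degree-$2$ vertex is adjacent to a center of degree $k$ rather than $k+1$, so it is not isomorphic to $T$. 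The ET inequality therefore compares $T'$ with this tree and tells you nothing about $\lambda_1(T)$.

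This is not an isolated slip. Your chains for $a\in\{x,y\}$, $b\in\{v,\ell_v\}$ all pass through $T-uv+xv$, so four of the seven cases rest on this step. The correct move there is to contract $ux$. Then $u$ absorbs $x$ and regains $k$ neighbours besides $v$ (its $k-1$ leaves and $z$). Adding the leaf at a leaf of $u$, or at $z$, recreates the pendant $P_2$ and gives $T$. Both ET hypotheses hold, since $ux$ lies on the $u$–$v$ path with $\deg u\ge 3$ and $\deg v=m+1\ge 4$. Your stated rules only say where the new leaf goes; it is the choice of which path edge to contract that decides whether you end at $T$.

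Two smaller corrections. First, $T'$ need not have a vertex of degree $4$: take $k=m=3$ and $ab=\ell_u\ell_v$. Still, $\lambda_1(T')\ge 2$ holds, because $u$ and $v$ both have degree at least $3$, so $T'$ contains some $B(r,3,3)$, whose spectral radius is $2$. Second, your GTS fallback is never triggered, since contracting edges of the $u$–$v$ path never lowers $\deg u$ or $\deg v$.

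With the fix, your argument is a valid but longer alternative to the paper's. The paper first uses generalized tree shifts to pull the replacement edge off $\ell_u$, $\ell_v$ or $y$ onto the neighbouring vertex. This reduces every case to $T$ itself or to $T-uv+xv$, and a single ET step (contract $ux$, add a leaf at a leaf of $u$) finishes the proof.
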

\begin{proof}
Let $u$ and $v$ be the two centers of the underlying double broom $B(1,k+1,m+1)$, and suppose that the extra leaf is attached to a leaf adjacent to $u$. Let $w$ be the unique vertex of degree $2$, and let $\ell_w$ be the leaf adjacent to $w$. We show that the edge $uv$ is not sustainable.

Delete the edge $uv$. Let $T'_1$ be the component containing $u$ and let $T'_2$ be the component containing $v$. For $x\in V(T'_1)$ and $y\in V(T'_2)$, let
\[
    T_{xy}=T-uv+xy.
\]
Thus every possible replacement of the edge $uv$ is of this form.

Using generalized tree shifts inside the two components, every tree $T_{xy}$ can be transformed, with nondecreasing spectral radius, into either $T$ itself or into the tree $T_{wv}$, see \cref{fig: replacing middle edge double star with a leaf} for the schematic transformations of these trees. More precisely, if $x$ is a leaf adjacent to $u$, a generalized tree shift with beneficiary $u$ moves the replacement edge from $x$ to $u$; if $x=\ell_w$, a generalized tree shift with beneficiary $w$ moves the replacement edge from $\ell_w$ to $w$. Similarly, if $y$ is a leaf adjacent to $v$, a generalized tree shift with beneficiary $v$ moves the replacement edge from $y$ to $v$. Therefore, by \cref{thm: GTS increasing radius}, it remains only to compare $T_{wv}$ with $T$.

Starting from $T_{wv}$, attach a new leaf at one of the leaves adjacent to $u$, and then contract the edge $uw$. The resulting tree is isomorphic to $T$. In $T_{wv}$, the vertices $u$ and $w$ both have degree at least $3$, so the edge $uw$ is not on a pendent path. Moreover, $T_{wv}$ contains a double broom of the form $B(\ell,3,3)$ as a subgraph, and hence $\lambda_1(T_{wv})\geq 2$. By \cref{lem: contracting plus adding leaf},
\[
    \lambda_1(T)>\lambda_1(T_{wv}).
\]
It follows that every possible replacement of $uv$ has spectral radius strictly smaller than $\lambda_1(T)$. Hence $uv$ is not sustainable, and therefore $T$ is non-sustainable.
\end{proof}

\begin{figure}[htb]
    \centering
    \begin{tikzpicture}[main_node/.style={circle,draw,minimum size=0.8em,inner sep=3pt},scale=0.8]
    \begin{scope}
        \node[main_node] (0) at (-1,1.5) {};
        \node[above, yshift=4pt] at (0) {$w$};
        \node[main_node] (10) at (-2,1.5) {};
        \node[above, yshift=4pt] at (10) {$\ell_w$};

        \node[main_node] (1) at (-1,0.7) {};
        \node[left, xshift=-4pt] at (1) {$\ell_u$};

        \node[main_node] (2) at (-1,0) {};

        \node[main_node] (3) at (-1,-1.5) {};

        \node[main_node] (4) at (0,0) {};
        \node[below, yshift=-4pt] at (4) {$u$};

        \node[main_node] (5) at (1,0) {};
        \node[below, yshift=-4pt] at (5) {$v$};

        \node[main_node] (6) at (2,1.5) {};
        \node[above, yshift=4pt] at (6) {$\ell_v$};

        \node[main_node] (7) at (2,0.7) {};

        \node[main_node] (8) at (2,0) {};

        \node[main_node] (9) at (2,-1.5) {};

        \node at (-3, 2.5) {$T$};

         \path[draw, thick]
        (0) edge node {} (4)
        (1) edge node {} (4)
        (2) edge node {} (4)
        (4) edge node {} (5)
        (5) edge node {} (6)
        (5) edge node {} (7)
        (5) edge node {} (8)
        (4) edge node {} (3)
        (5) edge node {} (9)
        (10) edge node {} (0)
        ;
        \path[draw, thick, dash pattern=on 1pt off 8pt]
        (2) edge node {} (3)
        (8) edge node {} (9);

    \end{scope}
    \begin{scope}[xshift=8cm, yshift=0.5cm]
        \node at (0,2) {$T$};
        \node at (-3,0) {$T_{wv}$};
        \node at (3,0) {$T_{u\ell_v}$};
        \node at (3,-2) {$T_{\ell_u\ell_v}$};
        \node at (0,0) {$T_{\ell_uv}$};
        \node at (-3,-2) {$T_{\ell_wv}$};

        \node at (-1,-2) {$T_{w\ell_v}$};
        \node at (1,-2) {$T_{\ell_w\ell_v}$};

    \draw[->] (-2.5, 0) -- (-0.5, 1.8 ) node[midway, sloped, yshift=5pt] {\tiny ET};

    \draw[->] (0,0.5) -- (0,1.5 ) node[midway, sloped, yshift=5pt] {\tiny GTS};

    \draw[->] (2.5, 0) -- (0.5,1.8 ) node[midway, sloped, yshift=5pt] {\tiny GTS};

    \draw[->] (-3,-1.5) -- (-3, -0.5 ) node[midway, sloped, yshift=5pt] {\tiny GTS};

    \draw[->] (-1.4,-1.5) -- (-2.5, -0.3 ) node[midway, sloped, yshift=5pt] {\tiny GTS};

    \draw[->] (3,-1.5) -- (3, -0.5 ) node[midway, sloped, yshift=5pt] {\tiny GTS};

    \draw[->] (0.3,-2) -- (-0.5, -2 ) node[midway, sloped, yshift=5pt] {\tiny GTS};

    \end{scope}

    \end{tikzpicture}
    \caption{The tree $T$ and a schematic illustration of how the different $T_{xy}$ can be transformed into one another.}
    \label{fig: replacing middle edge double star with a leaf}
\end{figure}

\section{Perron vector of high-degree trees}\label{sec:high_max_degree}

In this section we prove that principal vertices lie close to vertices of large degree. More precisely, if a tree $T$ on $n$ vertices contains a vertex of degree at least $n/2$, then every vertex of maximum degree is a principal vertex. The proof uses characteristic-polynomial identities for trees, together with Perron--Frobenius theory.

\begin{theorem}\label{thm: distance to perron at most k}
    Let $T$ be a tree on $n$ vertices and $u\in V(T)$. Let $k\geq 0$ be an integer. If $\deg(u) \geq \frac{n-k+1}{2}$, then $d(u,v) \leq k$ for any principal vertex $v$.

    Moreover, if $k\geq 1$, then equality $d(u,v)=k$ occurs if and only if $T$ is a balanced double broom with centers $u$ and $v$; equivalently, \[ T\cong B(k,d,d) \] for some $d$.
\end{theorem}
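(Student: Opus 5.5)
The plan is to compare $\Zv(u)$ and $\Zv(v)$ through the cofactor description of the Perron vector, and then control the resulting ratio by a vertex count that uses $\deg(u)=d\geq (n-k+1)/2$. Let $v$ be a principal vertex, put $m=d(u,v)$, and let $u=x_0,x_1,\dots,x_m=v$ be the $u$--$v$ path $P$. Assume for contradiction that $m\geq k+1$; for the equality statement, assume $m=k$.

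\textbf{Step 1: cofactor formula.} By \cref{lem: vector relation to characterstic polynomial} (with $v$ as the reference vertex) and \cref{lem: cofactor characterstic}, and since paths in a tree are unique, $\Zv(w)$ is proportional to $\phi(T\setminus P_{vw},\lambda_1)$. Here $P_{vw}$ is the $v$--$w$ path and $\lambda_1=\lambda_1(T)$. Hence
\[
\frac{\Zv(u)}{\Zv(v)}=\frac{\phi(T\setminus P,\lambda_1)}{\phi(T-v,\lambda_1)}.
\]
I would write this ratio as a telescoping product. Delete $x_{m-1},\dots,x_0$ from $T-v$ one at a time. At each step the factor $\phi(H,\lambda_1)/\phi(H-x,\lambda_1)$ is expanded with \cref{prop:charpoly-tree-recurrence}. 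That gives
\[
\frac{\phi(H,\lambda_1)}{\phi(H-x,\lambda_1)}=\lambda_1-\sum_{y\in N_H(x)}\frac{\phi(H-x-y,\lambda_1)}{\phi(H-x,\lambda_1)},
\]
and each summand is a diagonal resolvent entry $(\lambda_1 I-A(H-x))^{-1}_{yy}$. By \cref{prop:walk-generating-function} these entries are at least $1/\lambda_1$, and they are strictly larger whenever $y$ has further neighbours.

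\textbf{Step 2: the factor at $u$ and a lower bound on $\lambda_1$.} The step that deletes $x_0=u$ contains at least $d-1$ such summands, one for each neighbour of $u$ off the path. So this factor is at most $\lambda_1-(d-1)/\lambda_1$. On the other side, $T$ contains $K_{1,d}$ together with the path $P$, so $\lambda_1^2\geq d+\varepsilon$ with an explicit gain $\varepsilon$ coming from $P$. This should make the factor at $u$ small, roughly of order $1/\lambda_1$.

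\textbf{Step 3: the path factors and the vertex count.} The remaining factors come from deleting $x_1,\dots,x_{m-1}$, and I must show their product is not too large. For this I would use the count: the vertices outside $N[u]\cup P$ number at most $n-1-d-(m-1)\leq d+k-1-m$. When $m\geq k+1$ this is at most $d-2$, which is fewer than the $d$ neighbours of $u$. So the branch at $v$ is too light to make $v$ heavier than $u$. Concretely, I would bound the spectral radii of the components of $T-N[u]$ by counting vertices. I would then use the walk expansion $\lambda_1^j\Zv(v)=\sum_{\gamma}\Zv(\mathrm{end}(\gamma))$ over walks of length $j$ from $v$. Comparing it with the corresponding expansion at $u$ should show that $\Zv(v)\geq\Zv(u)$ forces $\sum_{w\in N(v)}\deg(w)$, and the analogous higher walk counts at $v$, to be at least what the star at $u$ provides. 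This contradicts the count above.

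\textbf{Step 4: equality.} When $m=k\geq1$, every inequality used above must be tight. This means every neighbour of $u$ off $P$ is a leaf, since otherwise a resolvent entry exceeds $1/\lambda_1$. It also means $P$ has no side branches at internal vertices, and all remaining vertices are leaves at $v$ with $\deg(v)=d$. This forces $T\cong B(k,d,d)$. Conversely, in $B(k,d,d)$ the automorphism swapping the two centres gives $\Zv(u)=\Zv(v)$, and a short check shows these entries are maximal.

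I expect Step 3 to be the main obstacle. The telescoping bound by itself gives each path factor only the weak bound $\leq\lambda_1$. The hypothesis on $\deg(u)$ must enter by capping how much mass the side of $P$ near $v$ can carry. I would first try to run the whole argument with $\lambda_1$ replaced by the explicit lower bound $\sqrt{d}$ and handle the off-star vertices as a forest with at most $d+k-1-m$ vertices. If that is too lossy, I would induct on $m$, using \cref{lem:general-ERL} to move the off-path subtrees without changing the relevant entries.
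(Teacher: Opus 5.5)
There is a genuine gap, and it is where you expected it: Step 3 is a list of strategies rather than an argument, and the heuristics feeding it point the wrong way.

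In Step 2, your upper bound $\lambda_1-(d-1)/\lambda_1$ for the factor at $u$ is increasing in $\lambda_1$. So the lower bound $\lambda_1^2\ge d+\varepsilon$ cannot make it small. You would need an upper bound on $\lambda_1$, and that depends on exactly the $v$-side structure you are trying to control.

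There is also no slack to exploit. In $B(1,d,d)$ one has $\lambda_1^2=\lambda_1+d-1$, so the factor at $u$ equals exactly $1=\Zv(v)/\Zv(u)$, not something of order $1/\lambda_1$. Since balanced brooms are the equality case, any proof must be sharp there. Estimating the telescoping factors one at a time cannot achieve this, because they are coupled: the factor at $x_j$ contains the resolvent entry at $x_{j-1}$, which is exactly the reciprocal of the factor at $x_{j-1}$. The crude bound $\le\lambda_1$ on path factors therefore discards the main term.

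The walk comparison in Step 3 fails too. The identity $\lambda_1^j\Zv(v)=\sum_\gamma\Zv(\mathrm{end}(\gamma))$ involves the unknown entries at the endpoints. So $\Zv(v)\ge\Zv(u)$ does not translate into an inequality between degree sums or walk counts at $v$ and at $u$.

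The missing idea is a reduction to a double broom that never estimates the $v$-side directly. Put $Y=T[N[u]\cup N[v]\cup P]$. Because $v$ is principal, \cref{lem:general-ERL} lets you re-attach every vertex outside $Y$ as a leaf of $v$. The result $T'$ is a double broom with centres $u,v$, and $\lambda_1(T')\ge\lambda_1(T)$ with equality only if $T=T'$. Your own vertex count gives $\deg_{T'}(v)=n-\deg(u)-m+1\le\deg(u)$, strictly when $m\ge k+1$. Then
\[
1\le\frac{\Zv(v)}{\Zv(u)}\le\frac{\phi_v(Y,\lambda_1(T))}{\phi_{uv}(Y,\lambda_1(T))}\le\frac{\phi_v(Y,\lambda_1(T'))}{\phi_{uv}(Y,\lambda_1(T'))}=\frac{\Zv_{T'}(v)}{\Zv_{T'}(u)}\le 1 .
\]
The steps are justified as follows:
\begin{enumerate}[(i)]
\item The first inequality after $1$ is \cref{lemma:ratio bound}: restricting to the subtree $Y$ can only increase the cofactor ratio.
\item The next inequality is \cref{lemma: monotonicity of ratio}: the bound depends only on $Y$ and $t$ and is increasing in $t$, while $\lambda_1$ only went up.
\item The equality is the equality case of \cref{lemma:ratio bound}, since $Y$ contains the $u$-side of $T'-v$.
\item The final inequality is the explicit double-broom computation in \cref{lemma:broom center ratio with fidget}.
\end{enumerate}
Equality throughout forces $m=k$, $\deg_{T'}(v)=\deg(u)$ and $T=T'$. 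This gives the equality statement and replaces your Step 4.

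Your closing remark about \cref{lem:general-ERL} points this way, but the rotation does change $\lambda_1$ and every entry. What makes it usable is the $t$-monotonicity of a quantity that depends only on $Y$.

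Finally, the converse check in Step 4 is false as stated: the centres of $B(k,d,d)$ need not be principal when $d\le 2$ (e.g.\ $B(2,2,2)\cong P_5$). It is not needed, since $v$ is principal by hypothesis.
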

Before proving this theorem, we collect some useful lemmas. First, we bound the ratio of the entries of Perron vector $\Zv$ of a graph.
\begin{lemma}\label{lemma:ratio bound}
    Let $X$ be a connected graph and let $u,v$ be two vertices. Let $Y$ be a connected subgraph of $X$ such that $u,v\in V(Y)$. Then
    \[
        \frac{\Zv(v)}{\Zv(u)} \le \frac{\phi_{v}(Y,\lambda_1)}{\phi_{uv}(Y,\lambda_1)}=\frac{\phi(Y-v,\lambda_1)}{\sum_{P\in\mathcal{P}_{uv}}\phi(Y-P,\lambda_1)},
    \]
    where $\lambda_1=\lambda_1(X)$.

    Moreover, if we denote by $S$ the set  $\{v\}$ union the set of vertices of the connected component of  $X-v$ containing $u$, then we have equality if and only if the induced subgraph $Z=X[S]$ is a subgraph of $Y$.
\end{lemma}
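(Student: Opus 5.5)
The plan is to compare the Perron vector of $X$, restricted to $V(Y)$, with the resolvent of $A(Y)$. Write $\lambda=\lambda_1$, let $\Zv$ be the Perron vector of $X$, and let $\Zv_Y$ be its restriction to $V(Y)$.

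\textbf{Step 1 (set-up).} For every $w\in V(Y)$, the eigenvalue equation of $X$ at $w$ reads $\lambda \Zv(w)=\sum_{x\sim_X w}\Zv(x)$. The $X$-neighbours of $w$ include all its $Y$-neighbours, so
\[
(\lambda I-A(Y))\Zv_Y=r,\qquad r(w)=\sum_{x\sim_X w,\ xw\notin E(Y)}\Zv(x)\ \ge 0 .
\]
Moreover $r(w)>0$ exactly when some $X$-edge at $w$ is missing from $Y$. First assume $Y\ne X$. Then \cref{thm:spectral-radius-monotonicity} gives $\lambda>\lambda_1(Y)$. Hence $M=(\lambda I-A(Y))^{-1}=\sum_{j\ge0}\lambda^{-j-1}A(Y)^j$ is entrywise positive, because $Y$ is connected, and $\Zv_Y=Mr$. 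By the definition of $\phi_{uv}$ and \cref{lem: cofactor characterstic}, $M_{vv}/M_{uv}=\phi_v(Y,\lambda)/\phi_{uv}(Y,\lambda)$. Since $Y$ is a tree-free general graph here, we use $\phi_v(Y,\lambda)=\phi(Y-v,\lambda)$ (the single path from $v$ to $v$) and the path expansion of $\phi_{uv}$. So the claimed inequality is equivalent to
\[
\sum_{w\in V(Y)} r(w)\bigl(M_{vv}M_{uw}-M_{uv}M_{vw}\bigr)\ \ge 0 .
\]

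\textbf{Step 2 (key identity).} Interpret $M_{ab}$ as the weighted walk generating function $\sum_{\gamma\in W_{ab}(Y)}\lambda^{-|\gamma|-1}$, as in \cref{prop:walk-generating-function}. Decompose each walk from $u$ at its first visit to $v$. Let $F$ be the generating function of walks from $u$ that stop at their first arrival at $v$, and let $G_{uw}$ be that of walks from $u$ to $w$ avoiding $v$. Then
\[
M_{uv}=F\,M_{vv},\qquad M_{uw}=G_{uw}+F\,M_{vw}.
\]
Substituting, $M_{vv}M_{uw}-M_{uv}M_{vw}=M_{vv}G_{uw}\ge0$, which proves the inequality. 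The weight-bookkeeping (the factor $\lambda$ per step and the extra $t^{-1}$) needs care. A cleaner route is to verify the same identity algebraically through a Schur complement at $v$, which I would do if the walk version gets messy.

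\textbf{Step 3 (equality).} Equality holds iff $r(w)G_{uw}=0$ for all $w$. Now $G_{uw}>0$ iff $w\ne v$ and $w$ lies in the component $C$ of $Y-v$ containing $u$. So equality means every $w\in C$ has all its $X$-edges inside $Y$. An induction from $u$ outward shows this is equivalent to the following: the component of $X-v$ containing $u$ equals $C$, and every $X$-edge inside $S$, including edges to $v$, lies in $Y$. That is exactly $Z=X[S]\subseteq Y$. This translation, together with the identity in Step 2, is the main obstacle.

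\textbf{Step 4 (the case $Y=X$).} Here \cref{lem: vector relation to characterstic polynomial}, applied with $v$ in the role of vertex $1$, gives $\Zv\propto(\phi_{vw}(X,\lambda))_w$. This requires $\phi_v(X,\lambda)=\phi(X-v,\lambda)\ne0$, which follows from interlacing and simplicity of $\lambda_1$. It yields $\Zv(v)/\Zv(u)=\phi_v/\phi_{uv}$ exactly, i.e. equality, consistent with $Z\subseteq X$. Alternatively, one can take the limit $t\downarrow\lambda$ in the resolvent argument.
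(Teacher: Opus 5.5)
Your argument is correct, and it takes a genuinely different route from the paper.

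The paper normalizes $\Zv(w)=\phi_{vw}(X,\lambda_1)$ via \cref{lem: vector relation to characterstic polynomial}, so the left-hand side is exactly $\phi_v(X,\lambda_1)/\phi_{uv}(X,\lambda_1)$. It then expands $\phi_{uv}(X,\lambda_1)$ over $u$--$v$ paths and compares $X$ with $Y$ path by path. Each $\phi(X-P,\lambda_1)/\phi(X-v,\lambda_1)$ is written as a telescoping product of vertex-deletion ratios. Each ratio is a closed-walk generating function by \cref{prop:walk-generating-function}, and hence monotone under passing to a subgraph.

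You never use cofactors of $X$. You view $\Zv_Y$ as $(\lambda I-A(Y))^{-1}r$ with $r\ge 0$, and reduce everything to the identity $M_{vv}M_{uw}-M_{uv}M_{vw}=M_{vv}\bigl((\lambda I-A(Y-v))^{-1}\bigr)_{uw}$ for $w\neq v$. This is the Schur complement at $v$. Your first-passage version also checks out, with $F=\sum\lambda^{-|\gamma|}$ over first-passage walks and $G_{uw}=\sum\lambda^{-|\gamma|-1}$ over $v$-avoiding walks. All rearrangements are justified by absolute convergence of the Neumann series.

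What your route buys is an explicit formula for the defect, $M_{vv}\Zv(u)-M_{uv}\Zv(v)=M_{vv}\sum_{w\in C}r(w)G_{uw}$. The equality case then follows at once, and your closure argument (that $C$ is the whole component of $X-v$ containing $u$, so $Z\subseteq Y$) is right. By contrast, the paper's converse direction, via an extra $u$--$v$ path or a strictly larger closed-walk count, is only sketched. Your route also shows the bound holds for every nonzero $\mathbf{x}\ge 0$ with $(\lambda I-A(Y))\mathbf{x}\ge 0$ and $\lambda>\lambda_1(Y)$, not just for restrictions of Perron vectors of supergraphs.

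The cost is that $Y=X$ needs your separate Step 4, since $\lambda I-A(X)$ is singular. The paper's comparison covers that case trivially, and its telescoping product is reused directly in the proof of \cref{lemma: monotonicity of ratio}.
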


\begin{proof}
Since $X$ is connected, $\lambda_1$ is a simple eigenvalue. Hence, by \cref{lem: vector relation to characterstic polynomial}, we may choose the eigenvector $\Zv$ (up to
scaling) so that
\[
\Zv(w)=\phi_{vw}(X,\lambda_1)
\qquad\text{for all }w\in V(X).
\]
In particular,
\[
\frac{\Zv(v)}{\Zv(u)}
=
\frac{\phi_v(X,\lambda_1)}{\phi_{uv}(X,\lambda_1)}.
\]
Now \cref{lem: cofactor characterstic} gives
\[
\frac{\phi_v(X,\lambda_1)}{\phi_{uv}(X,\lambda_1)}
=
\frac{\phi(X-v,\lambda_1)}
     {\displaystyle\sum_{P\in\mathcal P_{uv}(X)}\phi(X-P,\lambda_1)}
=
\frac{1}
     {\displaystyle\sum_{P\in\mathcal P_{uv}(X)}
      \frac{\phi(X-P,\lambda_1)}{\phi(X-v,\lambda_1)}}.
\]
Therefore it is enough to prove that
\[
\sum_{P\in\mathcal P_{uv}(X)}
\frac{\phi(X-P,\lambda_1)}{\phi(X-v,\lambda_1)}
\ge
\sum_{P\in\mathcal P_{uv}(Y)}
\frac{\phi(Y-P,\lambda_1)}{\phi(Y-v,\lambda_1)}.
\]

Since $\mathcal P_{uv}(Y)\subseteq \mathcal P_{uv}(X)$, and every summand is positive by \cref{thm:spectral-radius-monotonicity}, it suffices to show that for each $P\in\mathcal P_{uv}(Y)$,
\[
\frac{\phi(X-P,\lambda_1)}{\phi(X-v,\lambda_1)}
\ge
\frac{\phi(Y-P,\lambda_1)}{\phi(Y-v,\lambda_1)}.
\]

Fix $P=(v_0,\dots,v_\ell)\in\mathcal P_{uv}(Y)$ with $v_0=v$ and $v_\ell=u$.
For $0\le k\le \ell$, write $P_k=(v_0,\dots,v_k)$. Then
\[
\frac{\phi(X-P,\lambda_1)}{\phi(X-v,\lambda_1)}
=
\prod_{k=1}^{\ell}
\frac{\phi(X-P_k,\lambda_1)}{\phi(X-P_{k-1},\lambda_1)}
=
\prod_{k=1}^{\ell}
\frac{\phi\left((X-P_{k-1})-v_k,\lambda_1\right)}{\phi(X-P_{k-1},\lambda_1)}.
\]
Using \cref{prop:walk-generating-function} with $\lambda=\lambda_1$, $H=X-P_{k-1}$, and $x=v_k$, we obtain
\[
\frac{\phi\left((X-P_{k-1})-v_k,\lambda_1\right)}{\phi(X-P_{k-1},\lambda_1)}
=
\lambda_1^{-1}
\sum_{\gamma\in W_{v_k}(X-P_{k-1})}\lambda_1^{-|\gamma|},
\]
and similarly
\[
    \frac{\phi\left((Y-P_{k-1})-v_k,\lambda_1\right)}{\phi(Y-P_{k-1},\lambda_1)}
    =\lambda_1^{-1}\sum_{\gamma\in W_{v_k}(Y-P_{k-1})}\lambda_1^{-|\gamma|}.
\]
Since $Y-P_{k-1}$ is a subgraph of $X-P_{k-1}$, we have
\[
W_{v_k}(Y-P_{k-1})\subseteq W_{v_k}(X-P_{k-1}),
\]
and hence
\[
    \frac{\phi\left((X-P_{k-1})-v_k,\lambda_1\right)}{\phi(X-P_{k-1},\lambda_1)}
    \ge
    \frac{\phi\left((Y-P_{k-1})-v_k,\lambda_1\right)}{\phi(Y-P_{k-1},\lambda_1)}.
\]
Multiplying these inequalities over $k=1,\dots,\ell$ yields
\[
    \frac{\phi(X-P,\lambda_1)}{\phi(X-v,\lambda_1)}
    \ge
    \frac{\phi(Y-P,\lambda_1)}{\phi(Y-v,\lambda_1)}.
\]
Therefore,
\[
    \frac{\Zv(v)}{\Zv(u)}
    =
    \frac{1}{\sum_{P\in\mathcal P_{uv}(X)}\frac{\phi(X-P,\lambda_1)}{\phi(X-v,\lambda_1)}}
    \le
    \frac{1}{\sum_{P\in\mathcal P_{uv}(Y)}\frac{\phi(Y-P,\lambda_1)}{\phi(Y-v,\lambda_1)}}
    =
    \frac{\phi_v(Y,\lambda_1)}{\phi_{uv}(Y,\lambda_1)}.
\]

For the equality case, let $C$ be the connected component of $X-v$ containing $u$, let
\[
    S=V(C)\cup\{v\}, \qquad Z=X[S].
\]
All summands in the preceding inequalities are positive. Hence equality holds if and only if no additional $u,v$ path occurs in $X$ and every walk comparison used above is an equality. If $Z$ is a subgraph of $Y$, then every $u,v$ path in $X$ lies in $Z$, and after the initial vertex $v$ has been deleted, every walk starting at a vertex on such a path and remaining in the component containing $u$ also lies in $Z$. Thus every comparison above is an equality.
Conversely, suppose that $Z$ is not a subgraph of $Y$. Since $C$ is connected, either $X$ contains an $u,v$ path not contained in $Y$, which contributes an additional positive summand, or for some $P=(v_0,\dots,v_\ell)\in\mathcal P_{uv}(Y)$ and some $k\geq1$ there is a closed walk from $v_k$ in $X-P_{k-1}$ using an edge or vertex of $Z$ not present in $Y$. In the latter case the corresponding walk-generating-function inequality is strict.
Hence equality holds if and only if $Z$ is a subgraph of $Y$.
\end{proof}

\begin{lemma}\label{lemma: monotonicity of ratio}
    For any connected graph $Y$ and $u\neq v\in V(Y)$ we have
    \[
        t \mapsto\frac{\phi_v(Y,t)}{\phi_{uv}(Y,t)}
    \]
    is strictly monotone increasing on $\left(\lambda_1(Y-v),\infty\right)$.
\end{lemma}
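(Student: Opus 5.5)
The plan is to work with the reciprocal $\phi_{uv}(Y,t)/\phi_v(Y,t)$ and show that it is positive and strictly decreasing on $(\lambda_1(Y-v),\infty)$; the lemma then follows at once. This reuses the telescoping decomposition from the proof of \cref{lemma:ratio bound}, with $t$ now treated as a variable.

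First, by \cref{lem: cofactor characterstic} and $\phi_v(Y,t)=\phi(Y-v,t)$, we have
\[
\frac{\phi_{uv}(Y,t)}{\phi_v(Y,t)}=\sum_{P\in\mathcal P_{uv}(Y)}\frac{\phi(Y-P,t)}{\phi(Y-v,t)}.
\]
Here $\phi(Y-v,t)>0$ for $t>\lambda_1(Y-v)$, since all roots of $\phi(Y-v,\cdot)$ are at most $\lambda_1(Y-v)$. Because $Y$ is connected and $u\neq v$, the set $\mathcal P_{uv}(Y)$ is nonempty.

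Next, fix a path $P=(v_0,\dots,v_\ell)$ with $v_0=v$ and $v_\ell=u$, and write $P_k=(v_0,\dots,v_k)$. Then
\[
\frac{\phi(Y-P,t)}{\phi(Y-v,t)}=\prod_{k=1}^{\ell}\frac{\phi\bigl((Y-P_{k-1})-v_k,t\bigr)}{\phi(Y-P_{k-1},t)}.
\]
Each $H_k=Y-P_{k-1}$ is an induced subgraph of $Y-v$, so $\lambda_1(H_k)\le\lambda_1(Y-v)<t$. We may therefore apply \cref{prop:walk-generating-function} to $H_k$ with $u=v=v_k$, where the only path from $v_k$ to itself is the single vertex $v_k$. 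This gives
\[
\frac{\phi(H_k-v_k,t)}{\phi(H_k,t)}=t^{-1}\sum_{\gamma\in W_{v_k}(H_k)}t^{-|\gamma|}.
\]
The right-hand side is a convergent series of positive terms. Each term is strictly decreasing in $t>0$, and the term for the trivial walk, $t^{-1}$, is already strictly decreasing. Hence each factor is positive and strictly decreasing on $(\lambda_1(Y-v),\infty)$.

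A finite product of positive strictly decreasing functions is positive and strictly decreasing. A nonempty finite sum of such functions is also positive and strictly decreasing. Therefore $\phi_{uv}(Y,t)/\phi_v(Y,t)$ is positive and strictly decreasing, and its reciprocal $\phi_v(Y,t)/\phi_{uv}(Y,t)$ is strictly increasing on the stated interval. Positivity also shows that $\phi_{uv}(Y,t)\neq0$ there, so the ratio is well defined.

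The main point needing care is applying \cref{prop:walk-generating-function} to the graphs $H_k$, which may be disconnected. The identity $\phi(H-x,t)/\phi(H,t)=(tI-A(H))^{-1}_{xx}=\sum_{m\ge0}t^{-m-1}(A(H)^m)_{xx}$ holds for every graph $H$ and every $t>\lambda_1(H)$, where $\lambda_1(H)$ is the largest eigenvalue. This follows from Cramer's rule and the Neumann series, so connectivity is not needed. If one prefers to avoid walk expansions, an alternative is to compute the derivative with \cref{lem:christoffel-darboux}. However, the sign of $\sum_w\phi_{uw}\phi_{vw}$ is harder to control, so I would use the walk argument above.
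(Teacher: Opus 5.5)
Your argument is correct. Its skeleton is exactly the paper's: you write $\phi_{uv}(Y,t)/\phi_v(Y,t)$ as $\sum_{P}\phi(Y-P,t)/\phi(Y-v,t)$ and telescope each summand along $P_0\subset P_1\subset\dots\subset P_\ell$. The two routes differ only in how they show that each factor $\phi(H-x,t)/\phi(H,t)$, with $H=Y-P_{k-1}$ and $x=v_k$, is positive and strictly decreasing. You expand the factor as the walk series $t^{-1}\sum_{\gamma\in W_x(H)}t^{-|\gamma|}$. The paper instead uses \cref{lem:christoffel-darboux} to get $\bigl(\phi(H-x,t)/\phi(H,t)\bigr)'=-\sum_{w}\phi_{xw}(H,t)^2/\phi(H,t)^2$, and takes positivity from interlacing.

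Your closing remark that the sign is hard to control on the Christoffel--Darboux route is mistaken. The identity is applied with both vertices equal to $x$, so the right-hand side is a sum of squares. Its $w=x$ term, $\phi(H-x,t)^2$, is positive, so the sign is immediate. The difficulty you foresee arises only if one differentiates $\phi_v/\phi_{uv}$ directly. In fact the two arguments express one fact: $\frac{d}{dt}(tI-A)^{-1}=-(tI-A)^{-2}$, whose $(x,x)$ entry is $-\sum_w\bigl((tI-A)^{-1}_{xw}\bigr)^2$.

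Each version has its merits. Yours reuses the walk identity already used in \cref{lemma:ratio bound} and gives positivity and strict monotonicity at once. It also handles explicitly the possibly disconnected graphs $H$ and the nonvanishing of $\phi_{uv}(Y,t)$ on the interval. The paper's version is purely algebraic, needs no convergence discussion, and gives an explicit formula for the derivative.
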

\begin{proof}
    As before, let us use the fact that
    \[
        \frac{\phi_v(Y,t)}{\phi_{uv}(Y,t)}=\frac{1}{\sum_{P\in\mathcal P_{uv}(Y)}\frac{\phi(Y-P,t)}{\phi(Y-v,t)}},
    \]
    thus it is sufficient to show that for any $P\in\mathcal{P}_{uv}(Y)$ the map $\frac{\phi(Y-P,t)}{\phi(Y-v,t)}$ is positive and strictly monotone decreasing on $(\lambda_1(Y-v),\infty)$.

    Fix $P=(v_0,\dots,v_\ell)\in\mathcal P_{uv}(Y)$ with $v_0=v$ and $v_\ell=u$. By interlacing we have that
    \[
        \phi(Y-P,t), \phi(Y-v,t)>0
    \]
    for any $t> \lambda_1(Y-v)$. To show the monotonicity for $0\le k\le \ell$, write $P_k=(v_0,\dots,v_k)$. Then
    \[
        \frac{\phi(Y-P,t)}{\phi(Y-v,t)}
        =
        \prod_{k=1}^{\ell}
        \frac{\phi(Y-P_k,t)}{\phi(Y-P_{k-1},t)}
        =
        \prod_{k=1}^{\ell}
        \frac{\phi\left((Y-P_{k-1})-v_k,t\right)}{\phi(Y-P_{k-1},t)}.
    \]
    By the Christoffel-Darboux identity, \cref{lem:christoffel-darboux}, we have that for any graph $H$ with vertex $v$ we have
    \[
        \left(\frac{\phi(H-v,t)}{\phi(H,t)}\right)'=-\frac{\sum_{w\in V(H)}\phi_{vw}(H,t)^2}{\phi(H,t)^2},
    \]
    that is strictly negative on $t>\lambda_1(H)$. In particular, we have that each term in
    \[\prod_{k=1}^{\ell}\frac{\phi\left((Y-P_{k-1})-v_k,t\right)}{\phi(Y-P_{k-1},t)}\]
    is a strict monotone decreasing function for $t> \lambda_1(Y-v)$.
\end{proof}

\begin{lemma}\label{lemma:broom center ratio with fidget}
    Let $T$ be a tree obtained from a double broom $B(k,d_1,d_2)$ with centers $u,v$ respectively and attaching $\ell$ leaves to one of the leaf neighbour of $v$. Then
    \[
        (d_1-d_2)\lambda(T)^2\ge \ell(d_1-d_2+1) \Longleftrightarrow \frac{\Zv(v)}{\Zv(u)}\le 1.
    \]
    In particular, if $\ell=0$, i.e. we have $T=B(k, d_1, d_2)$, then
    \[
        d_1\geq d_2 \Longleftrightarrow \frac{\Zv(v)}{\Zv(u)}\le 1,
    \]
    with equality if and only if $d_1=d_2$.
\end{lemma}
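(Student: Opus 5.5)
We compute the Perron vector explicitly from the eigenvalue equations. The idea is to reduce the comparison of $\Zv(u)$ and $\Zv(v)$ to a comparison of two "boundary coefficients" at the ends of the central path. Write $\lambda=\lambda_1(T)$ and let $u=p_0,p_1,\dots,p_k=v$ be the central path. Let $x$ be the leaf neighbour of $v$ (in $B(k,d_1,d_2)$) that receives the $\ell$ extra leaves. Throughout, write $z_i=\Zv(p_i)>0$.

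\textbf{Step 1: express the boundary conditions.}
\begin{itemize}
\item Every leaf adjacent to $u$ has value $z_0/\lambda$, and every leaf at $x$ has value $\Zv(x)/\lambda$.
\item The equation at $x$ reads $\lambda\Zv(x)=z_k+\ell\Zv(x)/\lambda$. Hence $\Zv(x)=\lambda z_k/(\lambda^2-\ell)$. Here $\lambda^2>\ell$, since $T$ contains $K_{1,\ell+1}$ centred at $x$, so \cref{thm:spectral-radius-monotonicity} gives $\lambda^2\ge \ell+1$.
\item The equations at $u$ and at $v$ then become
\[
z_1=c_u z_0,\qquad z_{k-1}=c_v z_k,
\]
where
\[
c_u=\lambda-\frac{d_1-1}{\lambda},\qquad c_v=\lambda-\frac{d_2-2}{\lambda}-\frac{\lambda}{\lambda^2-\ell}.
\]
\item For $1\le i\le k-1$ we have the interior recurrence $\lambda z_i=z_{i-1}+z_{i+1}$.
\item A short computation gives
\[
c_v-c_u=\frac{d_1-d_2+1}{\lambda}-\frac{\lambda}{\lambda^2-\ell}.
\]
Multiplying by $\lambda(\lambda^2-\ell)>0$ shows that $c_v\ge c_u$ if and only if $(d_1-d_2)\lambda^2\ge \ell(d_1-d_2+1)$.
\end{itemize}
So the lemma reduces to the claim that $z_k\le z_0$ if and only if $c_v\ge c_u$.

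\textbf{Step 2: compare forward and backward ratios.}
\begin{itemize}
\item Define forward ratios $r_i=z_{i+1}/z_i$ and backward ratios $s_j=z_{k-j-1}/z_{k-j}$, for $0\le i,j\le k-1$.
\item The boundary conditions give $r_0=c_u$ and $s_0=c_v$. The interior recurrence gives $r_i=f(r_{i-1})$ and $s_j=f(s_{j-1})$, where $f(r)=\lambda-1/r$.
\item All ratios are positive, since $\Zv$ is positive, and $f$ is strictly increasing on $(0,\infty)$.
\item By induction, $c_v\ge c_u$ implies $s_j\ge r_j$ for all $j$, with strict inequality throughout if $c_v>c_u$. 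Symmetrically, $c_v<c_u$ gives $s_j<r_j$ for all $j$.
\item The products telescope: $\prod_i r_i=z_k/z_0$ and $\prod_j s_j=z_0/z_k$. This also covers $k=1$, where there is only $r_0$ and $s_0$.
\item Therefore $c_v\ge c_u$ implies $z_0/z_k\ge z_k/z_0$, that is, $z_k\le z_0$. Conversely, $c_v<c_u$ implies $z_k>z_0$.
\end{itemize}
Combined with Step 1, this proves the equivalence.

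\textbf{Step 3: the case $\ell=0$.} Here $c_v-c_u=(d_1-d_2)/\lambda$. So $\Zv(v)\le\Zv(u)$ if and only if $d_1\ge d_2$. Moreover, equality of the ratio products occurs exactly when $s_j=r_j$ for all $j$, which happens precisely when $c_u=c_v$, i.e. $d_1=d_2$.

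\textbf{Expected main obstacle.} The delicate point is the degenerate small-degree cases. If $d_2=1$, there is no leaf neighbour $x$ of $v$ to attach to, so the statement implicitly requires $d_2\ge 2$. The case $d_1=1$ needs a separate check of the formula for $c_u$ (here $u$ itself is a leaf). I would check these cases separately, together with the well-definedness $\lambda^2>\ell$ established in Step 1.
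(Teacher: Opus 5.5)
Your proof is correct, but it takes a different route from the paper's.

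\textbf{The paper's route.} The paper works with characteristic polynomials. It applies the equality case of \cref{lemma:ratio bound} to the two half-trees $Y_u$ (the component of $T-v$ containing $u$, together with $v$) and $Y_v$ (defined symmetrically). This gives $\Zv(v)/\Zv(u)=\phi(Y_u-v,\lambda)/\phi(Y_u-P,\lambda)$ and $\Zv(u)/\Zv(v)=\phi(Y_v-u,\lambda)/\phi(Y_v-P,\lambda)$. Expanding both with \cref{prop:charpoly-tree-recurrence}, it finds that they are the \emph{same} affine function of your $c_u$ and $c_v$ respectively, with slope $\phi(P-\{u,v\},\lambda)$. Subtracting yields the exact identity $\Zv(u)/\Zv(v)-\Zv(v)/\Zv(u)=(c_v-c_u)\,\phi(P-\{u,v\},\lambda)$. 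The last factor is positive by interlacing, and the criterion then follows from the same sign computation as your Step~1.

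\textbf{Your route.} You stay with the eigenvalue equations and replace this linear identity by a monotone comparison. The coefficients $c_u$ and $c_v$ seed the forward and backward orbits of $f(r)=\lambda-1/r$. Monotonicity of $f$ carries the inequality between $r_0$ and $s_0$ along the path, and telescoping turns it into a comparison of $z_k/z_0$ with its reciprocal.

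\textbf{What each buys.} Your argument is self-contained: it needs neither the cofactor representation of the Perron vector nor the equality case of \cref{lemma:ratio bound}. It also covers $k=1$ without any convention about empty paths. The paper's argument gives explicit formulas for both ratios and an exact expression for their difference. It stays in the characteristic-polynomial framework used throughout Section~\ref{sec:high_max_degree}.

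\textbf{Your anticipated obstacle.} It is not a real obstacle.
\begin{itemize}
\item For $d_1=1$, the equation at the leaf $u$ is $\lambda z_0=z_1$, i.e.\ $c_u=\lambda$, which is exactly your formula.
\item For $\ell=0$, your $c_v$ reduces to $\lambda-(d_2-1)/\lambda$, which remains valid when $d_2=1$.
\item Only for $\ell\ge 1$ is $d_2\ge 2$ needed, and the statement already presupposes it.
\end{itemize}
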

\begin{proof}
Let $\lambda_1=\lambda_1(T)$. Let $C_u$ be the connected component of $T-v$ containing $u$, and define \[ Y_u=T[V(C_u)\cup\{v\}]. \] Similarly, let $C_v$ be the connected component of $T-u$ containing $v$, and define \[ Y_v=T[V(C_v)\cup\{u\}]. \] Thus both $Y_u$ and $Y_v$ contain $u$ and $v$. Moreover, by the equality case of \cref{lemma:ratio bound}, \[ \frac{\Zv(v)}{\Zv(u)} =\frac{\phi(Y_u-v,\lambda_1)} {\phi_{uv}(Y_u,\lambda_1)}, \qquad \frac{\Zv(u)}{\Zv(v)} =\frac{\phi(Y_v-u,\lambda_1)} {\phi_{uv}(Y_v,\lambda_1)}. \]  Observe that $u$ and $v$ are connected in $Y_u$ and $Y_v$, and let us denote by $P$ the unique path connecting $u$ and $v$ in $Y_u$ and $Y_v$. Let $S$ be a star with exactly $\ell$ leaves. Then
\begin{align*}
    \frac{\Zv(v)}{\Zv(u)}&=\frac{\phi(Y_u-v,\lambda_1)}{\phi(Y_u-P,\lambda_1)}\\
    &=\frac{1}{\lambda_1^{d_1-1}}\left(\lambda_1\phi(Y_u-\{v,u\},\lambda_1) - \sum_{w\in N(u)}\phi(Y_u-\{v,u,w\})\right)\\
    &=\frac{1}{\lambda_1^{d_1-1}}\Bigl( \lambda_1^{d_1}\phi(P-\{u,v\},\lambda_1) -(d_1-1)\lambda_1^{d_1-2}\phi(P-\{u,v\},\lambda_1) \\
    &-\lambda_1^{d_1-1}\phi(P-(N[u]\cup\{v\}),\lambda_1) \Bigr)\\
    &=\frac{1}{\lambda_1}\left((\lambda_1^2-d_1+1)\phi(P-\{u,v\},\lambda_1)-\lambda_1\phi(P-(N[u]\cup\{v\}),\lambda_1) \right)
\end{align*}
In a similar manner we obtain that
\begin{align*}
    \frac{\Zv(u)}{\Zv(v)}&=\frac{\phi(Y_v-u,\lambda_1)}{\phi(Y_v-P,\lambda_1)}\\
    &=\frac{1}{\lambda_1^{d_2-2}\phi(S,\lambda_1)}\left(\lambda_1\phi(Y_v-\{v,u\},\lambda_1) - \sum_{w\in N(v)}\phi(Y_v-\{v,u,w\})\right)\\
    &=\frac{1}{\lambda_1^{d_2-2}\phi(S,\lambda_1)}  \left(\lambda_1^{d_2-1}\phi(S,\lambda_1)\phi(P-\{u,v\},\lambda_1)-\lambda_1^{d_2-2+\ell}\phi(P-\{u,v\},\lambda_1)\right.\\
    &\left.-(d_2-2) \lambda_1^{d_2-3}\phi(S,\lambda_1)\phi(P-\{u,v\},\lambda_1)-\lambda_1^{d_2-1}\phi(S,\lambda_1)\phi(P-(N[v]\cup\{u\}),\lambda_1)\right)\\
    &=\frac{1}{\lambda_1}\left(\left(\lambda_1^2-d_2+2-\frac{\lambda_1^{1+\ell}}{\phi(S,\lambda_1)}\right)\phi(P-\{u,v\},\lambda_1)-\lambda_1\phi(P-(N[v]\cup\{u\}),\lambda_1) \right)
\end{align*}

Since $\frac{\Zv(v)}{\Zv(u)}\ge 0$ we have that $\frac{\Zv(v)}{\Zv(u)}\le 1$ is equivalent to $\frac{\Zv(v)}{\Zv(u)}\le \frac{\Zv(u)}{\Zv(v)}$. In particular,
\begin{align*}
\frac{\Zv(u)}{\Zv(v)}-\frac{\Zv(v)}{\Zv(u)}&=\frac{1}{\lambda_1}\left(\left(d_1-1-d_2+2-\frac{\lambda_1^{1+\ell}}{\phi(S,\lambda_1)}\right)\phi(P-\{u,v\},\lambda_1) \right)\\
&=\frac{1}{\lambda_1}\left(d_1-d_2+1-\frac{\lambda_1^{1+\ell}}{\lambda_1^{1+\ell}-\ell \lambda_1^{\ell-1}}\right)\phi(P-\{u,v\},\lambda_1),
\end{align*}
since we use that $P-(N[u]\cup \{v\})$ is isomorphic to $P-(N[v]\cup \{u\})$.
As $P,S$ are proper subgraphs of $T$, therefore by interlacing we have
\[
\phi(P-\{u,v\},\lambda_1),\phi(S,\lambda_1)>0,
\]

thus
\[
    \frac{\Zv(v)}{\Zv(u)}\le 1 \Longleftrightarrow \frac{\Zv(u)}{\Zv(v)}-\frac{\Zv(v)}{\Zv(u)}\geq 0  \Longleftrightarrow d_1-d_2+1 \ge \frac{\lambda_1^2}{\lambda_1^2-\ell}\Longleftrightarrow (d_1-d_2)\lambda_1^2\ge \ell(d_1-d_2+1).
\]
If $\ell=0$, then this reduces to
\[ \frac{\Zv(v)}{\Zv(u)}\le 1 \Longleftrightarrow \frac{\Zv(u)}{\Zv(v)}-\frac{\Zv(v)}{\Zv(u)}\geq 0  \Longleftrightarrow d_1-d_2\geq 0.\]
This shows that if $\ell=0$ then $\Zv(u)=\Zv(v)$ if and only if we have equality at each place.
\end{proof}

\begin{proof}[Proof of \cref{thm: distance to perron at most k}]
    Suppose, for a contradiction, that there is a principal vertex $v$ with     $d(u,v)>k$, or that $k\geq 1$, $d(u,v)=k$, and $T$ is not a balanced     double broom with centers $u$ and $v$.
    Define a subset $S\subseteq V(T)$ as follows: let $S$ be the set of all vertices of $T$ that do not lie on the shortest path between $u$ and $v$ and are not neighbours of $u$ or $v$.
    Let $Y$ be the subtree of $T$ induced by $V(T)\setminus S$.
    Also let us construct a new tree $T'$ by first removing all edges incident to vertices in $S$ in the original tree $T$, and  connecting each vertex in $S$ directly to $v$ by a new edge, see \cref{fig: example construction new tree} for a graphical example of this.
    Thus, each vertex in $S$ becomes a  leaf attached to $v$, thus the resulting graph $T'$ is a double broom tree with centers $u$ and $v$. Observe that $Y$ is still a spanning subgraph of $T'$.
    Observe that $T'$ can be obtained from $T$ by an application of \cref{lem:general-ERL}, taking the principal vertex $v$ of $T$ as the vertex $u$ in  \cref{lem:general-ERL} and letting $E'$ be the set of edges incident to $S$ (these playing the role of the edges $v_iw_i$ in \cref{lem:general-ERL} with $d(v_i,u)<d(w_i,u)$).
    From this we conclude that $\lambda_1(T')>\lambda_1(T)$ if $T$ is not isomorphic to $T'$. Since $T'$ is a double broom tree on $n$ vertices, we have the following identity
    \[
    n=(\deg_{T'}(u)-1)+(\deg_{T'}(v)-1)+(d+1),
    \]
    where $(\deg_{T'}(u)-1)+(\deg_{T'}(v)-1)$ counts the leaves in $T'$ and $d+1$ counts the vertices on the path from $u$ to $v$. Since $\deg_{T'}(u)=\deg_{T}(u)$, this can be rearranged to
    \[
    \deg_{T'}(v)=n - \deg_{T}(u)- d+1.
    \]
    Additionally, we have $\deg_{T'}(v)\leq \deg_{T'}(u)$, since
    \[
    \deg_{T'}(v)= n - \deg_{T}(u) - d+ 1\leq n-\frac{n-k+1}{2}-k+1=\frac{n-k+1}{2}\le \deg_{T'}(u).
    \]
    By assumption $v$ is a principal vertex of $T$, thus $1\le \frac{\Zv(v)}{\Zv(u)}$ and we have the following chain of inequalities
    \[
    1\le\frac{\Zv(v)}{\Zv(u)}\le \frac{\phi_v(Y,\lambda_1(T))}{\phi_{uv}(Y,\lambda_1(T))} \le \frac{\phi_v(Y,\lambda_1(T'))}{\phi_{uv}(Y,\lambda_1(T'))}=\frac{\Zv'(v)}{\Zv'(u)}\le 1.
    \]
    The second inequality follows from Lemma~\ref{lemma:ratio bound}, the third inequality follows from Lemma~\ref{lemma: monotonicity of ratio} and the last inequality follows from Lemma~\ref{lemma:broom center ratio with fidget} with $\ell=0$, since $\deg_{T'}(u)\geq \deg_{T'}(v)$ by assumption.
    Therefore equality holds throughout the chain. In particular, the last inequality is an equality, and \cref{lemma:broom center ratio with fidget} implies \[ \deg_{T'}(u)=\deg_{T'}(v). \] Together with \[ \deg_{T'}(v) =n-\deg_T(u)-d+1 \leq \frac{n-k+1}{2} \leq \deg_{T'}(u), \] this forces equality at every step, and hence $d=k$ and \[ \deg_T(u)=\deg_{T'}(v)=\frac{n-k+1}{2}. \] Equality in the monotonicity step also forces $\lambda_1(T')=\lambda_1(T)$. Since a nontrivial application of \cref{lem:general-ERL} strictly increases the spectral radius, the construction of $T'$ must have been trivial. Thus $T=T'$, and $T$ is a balanced double broom with centers $u$ and $v$ at distance $k$.
    \begin{figure}[htb]
    \centering
    \begin{tikzpicture}[main_node/.style={circle,draw,minimum size=0.8em,inner sep=3pt}]

        \node[] at (-1,1) {$T$};
        \node[main_node] (0) at (0,1) {};
        \node[main_node] (1) at (0,-1) {};
        \node[main_node] (2) at (-0.7,-0.5) {};
        \node[main_node] (3) at (-0.7, 0.5) {};
        \node[main_node] (4) at (0,0) {};
        \node[above, yshift=2pt, xshift=8pt] at (4) {$u$};
        \node[main_node] (5) at (1,0) {};
        \node[main_node] (6) at (2,0) {};
        \node[main_node] (7) at (3,0) {};
        \node[below, yshift=-2pt, xshift=8pt] at (7) {$v$};
        \node[main_node] (8) at (2,1) {};
        \node[main_node] (9) at (2,-1) {};
        \node[main_node] (10) at (4,0) {};
        \path[draw, thick]
        (4) edge node {} (3)
        (4) edge node {} (2)
        (4) edge node {} (1)
        (4) edge node {} (0)
        (4) edge node {} (5)
        (5) edge node {} (6)
        (6) edge[red] node {} (8)
        (6) edge[blue] node {} (9)
        (6) edge node {} (7)
        (7) edge node {} (10);

        \begin{scope}[xshift=6.5cm]
            \node[] at (-1,1) {$T'$};
            \node[main_node] (11) at (-0.7,0.5) {};
            \node[main_node] (12) at (0,0) {};
            \node[above, yshift=2pt, xshift=8pt] at (12) {$u$};
            \node[main_node] (13) at (-0.7,-0.5) {};
            \node[main_node] (14) at (0,1) {};

            \node[main_node] (15) at (0,-1) {};
            \node[main_node] (16) at (1,0) {};
            \node[main_node] (17) at (2,0) {};
            \node[main_node] (18) at (2,1) {};
            \node[main_node] (19) at (3,0) {};
            \node[below, yshift=-2pt, xshift=8pt] at (19) {$v$};
            \node[main_node] (20) at (4,0) {};
            \node[main_node] (21) at (2,-1) {};

            \path[draw, thick]
            (12) edge node {} (14)
            (12) edge node {} (13)
            (12) edge node {} (11)
            (12) edge node {} (15)
            (12) edge node {} (16)
            (16) edge node {} (17)
            (18) edge[red] node {} (19)
            (21) edge[blue] node {} (19)
            (19) edge node {} (20)
            (17) edge node {} (19);
        \end{scope}

    \end{tikzpicture}
    \caption{On the left $T$ and on the right $T'$ where we coloured the edges incident to the subset $S$.}
    \label{fig: example construction new tree}
\end{figure}
\end{proof}

\begin{theorem}\label{thm:large degree is Perron}
    Let $T$ be a tree on $n$ vertices with maximum degree $\Delta(T)\ge n/2$. Let $\Zv$ be the Perron vector of $T$. If $u$ is a vertex of maximum degree, then $u$ is a principal vertex, i.e., $\Zv(u)$ is a maximum entry of $\Zv$.
\end{theorem}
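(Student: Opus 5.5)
We plan to derive this from \cref{thm: distance to perron at most k} with $k=1$, plus a direct look at the leftover cases. Let $u$ have degree $\Delta(T)\ge n/2$, and let $v$ be a principal vertex. The hypothesis of \cref{thm: distance to perron at most k} with $k=1$ is $\deg(u)\ge \frac{n-1+1}{2}=\frac n2$, which holds. Hence $d(u,v)\le 1$. Moreover, $d(u,v)=1$ happens only if $T\cong B(1,d,d)$ with centers $u$ and $v$.

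We split into the two possible values of $d(u,v)$.

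\emph{Case $d(u,v)=0$.} Then $v=u$, so $u$ is principal and we are done.

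\emph{Case $d(u,v)=1$.} Then $T$ is the balanced double star $B(1,d,d)$ with centers $u,v$. By \cref{lemma:broom center ratio with fidget} with $\ell=0$ and $d_1=d_2$, we get $\Zv(u)=\Zv(v)$. So $u$ attains the maximum entry and is principal as well. The trivial case of small $n$ (for instance $n=2$, where $T=P_2$) is immediate by symmetry.

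The point to check carefully is that the statement concerns \emph{every} vertex of maximum degree, while the argument above starts from an arbitrary principal vertex $v$ and shows it is within distance one of $u$. This is enough. Every principal vertex $v$ either equals $u$ or is adjacent to it in the balanced double-star configuration, and in both situations $\Zv(u)=\max\Zv$. Since a principal vertex always exists, $u$ is principal.

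There is no real obstacle: the content lies entirely in \cref{thm: distance to perron at most k}. The only care needed is to verify that the degree condition at $k=1$ reads exactly $\deg(u)\ge n/2$. One must also use the equality clause, and not just the distance bound, to handle the adjacent case.
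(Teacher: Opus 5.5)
Your proposal is correct and is essentially the paper's argument: both apply \cref{thm: distance to perron at most k} with $k=1$ and use its equality clause to isolate the balanced double star $B(1,n/2,n/2)$, in which $\Zv(u)=\Zv(v)$. The differences are cosmetic. The paper first notes that the maximum-degree vertex is unique unless $T\cong B(1,n/2,n/2)$, and settles that exceptional case by the automorphism swapping the two centers. You instead case-split on $d(u,v)$ for the given $u$, which makes the uniqueness remark unnecessary, and you cite \cref{lemma:broom center ratio with fidget}, where the symmetry argument would be slightly simpler.
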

\begin{proof}
    Observe that $T$ has a unique maximum degree vertex if and only if $\Delta(T)>\frac{n}{2}$ or $\Delta(T)=\frac{n}{2}$ and $T\ncong B(1,n/2,n/2)$.
    Therefore, if $T\ncong B(1,n/2,n/2)$, then by \cref{thm: distance to perron at most k} the unique maximum degree vertex $v$ of degree at least $n/2$ must be the unique principal vertex of $T$.

    If $T\cong B(1,n/2,n/2)$, then we have exactly two non-leaf vertex of the same degree $n/2$. By symmetry, the two non-leaf vertices are both principal vertices.
\end{proof}

\section{Sustainability of trees with large maximum degree}
In this section we study sustainable trees whose maximum degree is at least \(n/2\). We first prove that if
\[
    \Delta(T)\geq \frac n2+1,
\]
then \(T\) is non-sustainable. The boundary case \(\Delta(T)=n/2\) is more delicate; after replacing a leaf edge incident to a maximum-degree vertex, the principal vertex of the resulting tree need not remain the original maximum-degree vertex. We therefore treat the cases \(\Delta(T)\geq n/2+1\) and \(\Delta(T)=n/2\) separately.
The main idea of the proof is that by reconnecting an edge next to a vertex of maximum degree, it doesn't loose it property of being a principal vertex. Unfortunately, the  case when $\Delta(T)=n/2$ this argument won't be applicable for all possible trees. To this end we define an exceptional family of trees described by Figure~\ref{fig: exceptional family} $\mathcal{F}=\{F_1(n),\dots,F_8(n)\}$ where the stability of being a principal vertex could fail and analyze their sustainability separately.

\subsection{Trees with maximum degree at least \texorpdfstring{$n/2+1$}{n/2+1}}
\begin{theorem}\label{thm: large degree is non-sustainable}
    Let $T$ be a tree on $n$ vertices with maximum degree $\Delta(T)\geq \frac{1}{2}n+1$, then $T$ is non-sustainable.
\end{theorem}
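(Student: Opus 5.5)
The approach is to exhibit a single non-sustainable edge: a leaf edge at the maximum-degree vertex. Let $u$ be a vertex with $\deg(u)=\Delta\geq n/2+1$. Since $u$ has at most $n-1-\Delta\leq n/2-2$ non-neighbours, and each neighbour of $u$ that is not a leaf carries at least one non-neighbour of $u$ in its branch, at least $\Delta-(n-1-\Delta)\geq 3$ neighbours of $u$ are leaves. Fix a leaf $x$ adjacent to $u$. Deleting $ux$ isolates $x$, so every admissible replacement has the form $T_y=T-ux+xy$ with $y\neq u,x$. We will show $\lambda_1(T_y)<\lambda_1(T)$ for every such $y$, so that $ux$ is not sustainable.

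The key point is that in each $T_y$ the vertex $u$ is still principal. Indeed, $T_y$ has $n$ vertices and $\deg_{T_y}(u)=\Delta-1\geq n/2$. Every other vertex of $T_y$ has degree at most $n-\Delta+1\leq n/2$, with equality only in degenerate situations. So $u$ is a vertex of maximum degree in $T_y$ with degree at least $n/2$, and \cref{thm:large degree is Perron} (or directly \cref{thm: distance to perron at most k} with $k=1$) shows that $u$ is a principal vertex of $T_y$. If another vertex also has degree $n/2$, then $T_y\cong B(1,n/2,n/2)$; in that case $u$ is still principal by symmetry, which is all we need.

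Now pass back from $T_y$ to $T$ by an edge rotation. In $T_y$ the edge $xy$ is present, $u$ is not adjacent to $x$, and $\Zv_{T_y}(u)\geq \Zv_{T_y}(y)$ because $u$ is principal. By \cref{lem: ERL}, applied with the triple $(u,y,x)$, the rotation $T_y-yx+ux=T$ satisfies $\lambda_1(T)>\lambda_1(T_y)$. This holds for every choice of $y$, so every replacement of $ux$ strictly decreases the spectral radius. Hence $ux$ is non-sustainable, and therefore $T$ is non-sustainable.

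The main obstacle is verifying the hypothesis of \cref{thm:large degree is Perron} in $T_y$. This is exactly where the bound $\Delta\geq n/2+1$, rather than $n/2$, is used: after the rotation the degree of $u$ drops by one and must still be at least $n/2$. We also have to check that $u$ remains a maximum-degree vertex, that is, that no other vertex $y$ can reach degree $\Delta$. This follows from $\deg_{T_y}(y)\leq n-\Delta+1\leq n/2\leq \Delta-1$. Once this is settled, the rest is a direct application of the edge-rotation lemma.
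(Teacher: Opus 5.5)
Your proposal is correct and follows the paper's proof essentially step for step. You take a leaf $x$ at the maximum-degree vertex $u$, check that $u$ is still a maximum-degree vertex of degree at least $n/2$ in every $T-ux+xy$, invoke \cref{thm:large degree is Perron} to make $u$ principal there, and rotate $xy$ back to $ux$ via \cref{lem: ERL}. The only differences are cosmetic: you count non-neighbours of $u$ where the paper uses the handshake identity. Also, the bound $\deg_T(y)\le n-\Delta$ is used without proof and deserves one line: at most one neighbour of $y$ lies in $N[u]$, and the rest are among the $n-1-\Delta$ non-neighbours of $u$.
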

\begin{proof}
    Let $u$ be a vertex of degree $\Delta=\Delta(T)$. First, we show that $u$ has a leaf neighbour. Indeed, if every neighbour of $u$ has degree at least two, then
    \[
       2|E(T)|=  2(n-1)\ge \Delta+\Delta\cdot 2 + (n-\Delta-1)\cdot 1\ge 2n+1
    \]
    a contradiction.

    Second, we claim that any  vertex $w\neq u$ has degree at most $n/2-1$; if $w\neq u$ has degree at least $n/2$, then the sum of the degrees would be
    \[
        2(n-1)\ge \Delta+n/2+(n-2)\ge 2n-1
    \]
    a contradiction.

    Now let $v$ be a leaf neighbour of $u$ and set $e=uv$.  Choose any vertex $w\neq u,v$, and form $T'=T-uv+vw$.
    Then $T'$ is a tree and has maximum degree $\Delta-1\geq\frac{1}{2}n$, and so $u$ is a vertex of maximum degree in $T'$. By Theorem~\ref{thm:large degree is Perron} we have that $u$ is a principal vertex. Now $T$ can be obtained from $T'$ by replacing the edge $e'=vw$ with $e=uv$, thus \cref{lem: ERL} gives us $\lambda_1(T)>\lambda_1(T')$. This implies that our initial edge $e$ is non-sustainable, and so $T$ is non-sustainable.
\end{proof}

\begin{remark}
The bound in \cref{thm: large degree is non-sustainable} is sharp. For $k\geq 3$, let
\[
    T_k=B(2,k,k-1).
\]
Then $T_k$ has $n=2k$ vertices and maximum degree $k=n/2$, but $T_k$ is sustainable.

Let $u$ be the center of degree $k$, and let $v$ be the center of degree $k-1$. By \cref{thm:large degree is Perron}, $u$ is a principal vertex. Hence every edge not incident to $u$ is sustainable by \cref{lem:ERLsustainable}. It remains to check the edges incident to $u$.

If $x$ is a leaf neighbour of $u$, then
\[
    T_k-ux+xv\cong T_k,
\]
so the edge $ux$ is sustainable. The remaining edge incident to $u$ lies on the path from $u$ to $v$ and is incident to a vertex of degree $2$. Hence it is sustainable by \cref{lem: incident degree 2 is sustainable}. Therefore $T_k$ is sustainable.
\end{remark}

\subsection{Trees with maximum degree \texorpdfstring{$n/2$}{n/2}}

\begin{lemma}\label{lemma:reduction}
    Let $T_0$ be a tree, let $v\in V(T_0)$, and let $\ell_1,\dots,\ell_k$ be leaf neighbours of $v$. For $m\geq 1$, let $\mathcal F_m$ be the family of trees obtained from $T_0$ as follows: for each $i$, identify $\ell_i$ with a vertex of a tree $T_i$, and assume that
    \[
        \sum_{i=1}^k |E(T_i)|=m.
    \]
    Then the maximum of $\lambda_1(F)$ over $F\in\mathcal F_m$ is attained by a tree obtained from $T_0$ by attaching all $m$ new edges as a star at one of the vertices $\ell_i$.

\end{lemma}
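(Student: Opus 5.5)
The family $\mathcal F_m$ is finite, so a maximizer $F^\ast$ exists. The plan is to apply spectral-radius-increasing operations to $F^\ast$ until all $m$ new edges form a single star at one $\ell_i$, and to argue that no step can decrease $\lambda_1$. Since $F^\ast$ is a maximizer, every tree produced along the way is also a maximizer. The operations must also keep us inside $\mathcal F_m$: the subtree hanging at each $\ell_i$ may change shape, but $\ell_i$ must remain the identification vertex and the total number of new edges must stay $m$.

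\emph{Step 1: each $T_i$ becomes a star centred at $\ell_i$.} Fix $i$ with $|E(T_i)|\ge 1$ and write $\Zv$ for the Perron vector of $F^\ast$. Let $x$ be a vertex of $T_i$ with $\Zv(x)$ maximal among the vertices of $T_i$. Every edge of $T_i$ not incident with $x$ is rotated, via \cref{lem:general-ERL}, so that it becomes incident with $x$. Orient each such edge $v_jw_j$ so that $w_j$ is the endpoint farther from $x$; the $w_j$ are then distinct and $\Zv(x)\ge\Zv(v_j)$. This gives a tree in which $T_i$ is replaced by a star centred at $x$, with $\ell_i$ either the centre $x$ or a leaf of that star, and with $\lambda_1$ strictly increased unless the tree was unchanged.

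\emph{Step 2: move the star centre to $\ell_i$ if necessary.} If $x\neq\ell_i$, then $\ell_i$ is a leaf of the star whose other neighbour is $v$. So $\ell_i$ has degree $2$ and lies on a path from $v$ to $x$ whose interior vertices all have degree $2$. A generalized tree shift with candidate $x$ and beneficiary $\ell_i$ moves the remaining leaves of $x$ onto $\ell_i$. By \cref{thm: GTS increasing radius} this does not decrease $\lambda_1$, and the result is a star centred at $\ell_i$ with the same number of edges. After Steps 1 and 2, for every $i$, $T_i$ is a star $K_{1,m_i}$ attached at $\ell_i$, with $\sum_i m_i=m$.

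\emph{Step 3: merge all stars into one.} Among $\ell_1,\dots,\ell_k$ with $m_i\ge1$, choose $\ell_j$ maximizing $\Zv(\ell_j)$ in the current tree. Apply \cref{lem:general-ERL} with $u=\ell_j$, taking as the $v_i$ the other centres $\ell_i$ and as the $w_i$ their star leaves. These leaves are distinct and not adjacent to $\ell_j$, and $\Zv(\ell_j)\ge\Zv(\ell_i)$ by the choice of $j$. The lemma gives a strict increase of $\lambda_1$ unless no other star was nonempty, and the resulting tree lies in $\mathcal F_m$ with all $m$ edges forming a star at $\ell_j$. Combining the three steps, some tree of the claimed form has $\lambda_1\ge\lambda_1(F^\ast)$, so it is also a maximizer.

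The main obstacle I expect is the bookkeeping in Step 1 rather than the spectral inequalities. The hypotheses of \cref{lem:general-ERL} have to be checked carefully: the new endpoint $x$ must not already be adjacent to $w_j$, and the $w_j$ must be distinct. The ordering "toward $x$" handles both, because in a tree each vertex other than $x$ has a unique edge toward $x$. One must also confirm that edges of $T_0$, in particular $v\ell_i$, are never moved, so that the result stays in $\mathcal F_m$. Degenerate cases need no separate argument: when $m_i=0$ for all but one $i$, the relevant operation is trivial and the inequalities are non-strict.
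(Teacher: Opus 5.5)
Your proof is correct and has the same overall structure as the paper's. Both first force each nonempty $T_i$ to be a star centred at $\ell_i$, and then merge all the stars at the $\ell_j$ with largest Perron entry via \cref{lem:general-ERL}, exactly as in the paper's final step.

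The only difference is in the first stage. The paper applies a single generalized tree shift with beneficiary $\ell_i$ and candidate a non-leaf neighbour of $\ell_i$ in $T_i$; this shift is proper, since $\ell_i$ is also adjacent to $v$. It contradicts maximality at once and needs no Perron vector. You instead rotate the edges of $T_i$ onto its Perron-maximal vertex $x$ and then shift the centre from $x$ to $\ell_i$. This is a slightly longer detour, but your hypothesis checks for the rotation are correct. The degree-$2$ condition you cite in Step 2 is vacuous, since $\ell_i$ and $x$ are adjacent.
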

\begin{proof}
    Let $F\in\mathcal F_m$ maximize $\lambda_1(F)$. Suppose that, for some $i$, the attached tree $T_i$ has an edge not incident to $\ell_i$. Applying a generalized tree shift toward $\ell_i$ strictly increases the spectral radius and keeps the resulting tree in $\mathcal F_m$. This contradicts the maximality of $F$. Hence each non-empty $T_i$ must be a star centered at $\ell_i$.

    It remains to show that the new star edges may be concentrated at a single one of the vertices $\ell_i$. Let $\Zv$ be the Perron vector of $F$, and choose $\ell_i$ such that
    \[
        \Zv(\ell_i)=\max\{\Zv(\ell_1),\dots,\Zv(\ell_k)\}.
    \]
    For every $j\neq i$, rotate all new leaf edges attached at $\ell_j$ to $\ell_i$. By \cref{lem:general-ERL}, this does not decrease the spectral radius. Therefore there exists a maximizer in which all $m$ new edges are attached as a star centered at a single vertex $\ell_i$.
\end{proof}

\begin{figure}[htb]
    \centering
    \begin{tikzpicture}[main_node/.style={circle,draw,minimum size=0.8em,inner sep=3pt}]

        \begin{scope}[xshift=-2cm]
        \node[] at (-0.7, 1.5) {$F_1(n)$};
        \node[main_node] (0) at (0,1) {};
        \node[main_node] (1) at (0,-1) {};
        \node[main_node] (2) at (-0.7,-0.5) {};
        \node[main_node] (3) at (-0.7, 0.5) {};
        \node[main_node, fill=black] (4) at (0,0) {};
        \node[above, yshift=2pt, xshift=7pt] at (4) {$u$};
        \node[main_node] (5) at (-1,0) {};
        \node[main_node] (6) at (1,0) {};
        \node[draw, rectangle, fill=black] (7) at (2,0) {};
        \node[above, yshift=2pt, xshift=-7pt] at (7) {$v$};
        \node[main_node] (8) at (2,1) {};
        \node[main_node] (9) at (2,-1) {};
        \node[main_node] (10) at (2.7, -0.5) {};
        \node[main_node] (11) at (2.7 ,0.5) {};

        \path[draw, thick]
        (4) edge node {} (3)
        (4) edge node {} (2)
        (4) edge node {} (1)
        (4) edge node {} (0)
        (4) edge node {} (5)
        (4) edge node {} (6)
        (7) edge node {} (8)
        (7) edge node {} (9)
        (6) edge node {} (7)
        (7) edge node {} (10)
        (7) edge node {} (11);
        \end{scope}

        \begin{scope}[xshift=3cm]
        \node[] at (-0.7, 1.5) {$F_2(n)$};
        \node[main_node] (0) at (0,1) {};
        \node[main_node] (1) at (0,-1) {};
        \node[main_node] (2) at (-0.7,-0.5) {};
        \node[main_node] (3) at (-0.7, 0.5) {};
        \node[main_node, fill=black] (4) at (0,0) {};
        \node[above, yshift=2pt, xshift=7pt] at (4) {$u$};
        \node[main_node] (5) at (-1,0) {};
        \node[main_node] (6) at (1,0) {};
        \node[regular polygon,
        regular polygon sides=3,
        draw,
        inner sep=2pt,
        fill=black] (7) at (2,0) {};
        \node[above, yshift=2pt, xshift=-7pt] at (7) {$v$};
        \node[main_node] (8) at (2,1) {};
        \node[main_node] (9) at (2,-1) {};
        \node[main_node] (10) at (3, 0) {};
        \node[main_node] (11) at (2.7 ,1) {};

        \path[draw, thick]
        (4) edge node {} (3)
        (4) edge node {} (2)
        (4) edge node {} (1)
        (4) edge node {} (0)
        (4) edge node {} (5)
        (4) edge node {} (6)
        (7) edge node {} (8)
        (7) edge node {} (9)
        (6) edge node {} (7)
        (7) edge node {} (10)
        (8) edge node {} (11);
        \end{scope}

        \begin{scope}[xshift=8cm]
        \node[] at (-0.7, 1.5) {$F_3(n)$};
        \node[main_node] (0) at (0,1) {};
        \node[main_node] (1) at (0,-1) {};
        \node[main_node] (2) at (-0.7,-0.5) {};
        \node[main_node] (3) at (-0.7, 0.5) {};
        \node[main_node, fill=black] (4) at (0,0) {};
        \node[above, yshift=2pt, xshift=7pt] at (4) {$u$};
        \node[main_node] (5) at (-1,0) {};
        \node[main_node, fill=black] (6) at (1,0) {};
        \node[above, yshift=2pt, xshift=-7pt] at (6) {$v$};
        \node[main_node] (7) at (2,0) {};
        \node[main_node] (8) at (1,1) {};
        \node[main_node] (9) at (1,-1) {};
        \node[main_node] (10) at (1.7, 0.5) {};
        \node[main_node] (11) at (1.7, -0.5) {};

        \path[draw, thick]
        (4) edge node {} (3)
        (4) edge node {} (2)
        (4) edge node {} (1)
        (4) edge node {} (0)
        (4) edge node {} (5)
        (4) edge node {} (6)
        (6) edge node {} (8)
        (6) edge node {} (9)
        (6) edge node {} (7)
        (6) edge node {} (11)
        (6) edge node {} (10);

        \end{scope}

        \begin{scope}[xshift=-2cm, yshift=-3.5cm]
        \node[] at (-0.7, 1.5) {$F_4(n)$};
        \node[main_node] (0) at (0,1) {};
        \node[main_node] (1) at (0,-1) {};
        \node[main_node] (2) at (-0.7,-0.5) {};
        \node[main_node] (3) at (-0.7, 0.5) {};
        \node[main_node, fill=black] (4) at (0,0) {};
        \node[above, yshift=2pt, xshift=7pt] at (4) {$u$};
        \node[main_node] (5) at (-1,0) {};
        \node[draw, rectangle, fill=black] (6) at (1,0) {};
        \node[above, yshift=2pt, xshift=-7pt] at (6) {$v$};
        \node[main_node] (7) at (1.7,0.5) {};
        \node[main_node] (8) at (1,1) {};
        \node[main_node] (9) at (1,-1) {};
        \node[main_node] (10) at (2.7,0.5) {};
        \node[main_node] (11) at (1.7,-0.5) {};

        \path[draw, thick]
        (4) edge node {} (3)
        (4) edge node {} (2)
        (4) edge node {} (1)
        (4) edge node {} (0)
        (4) edge node {} (5)
        (4) edge node {} (6)
        (6) edge node {} (8)
        (6) edge node {} (9)
        (6) edge node {} (7)
        (7) edge node {} (10)
        (6) edge node {} (11);

        \end{scope}

        \begin{scope}[xshift=4cm, yshift=-3.5cm]
        \node[] at (-1.7, 1.5) {$F_5(n)$};
        \node[main_node] (0) at (0,1) {};
        \node[main_node] (1) at (0,-1) {};
        \node[main_node] (2) at (-0.7,-0.5) {};
        \node[main_node] (3) at (-0.7, 0.5) {};
        \node[main_node, fill=black] (4) at (0,0) {};
        \node[above, yshift=2pt, xshift=7pt] at (4) {$u$};
        \node[main_node] (5) at (-1,0) {};
        \node[draw, rectangle, fill=black] (6) at (1,0) {};
        \node[above, yshift=2pt, xshift=-7pt] at (6) {$v$};
        \node[main_node] (7) at (1.7,-0.5) {};
        \node[main_node] (8) at (1,1) {};
        \node[main_node] (9) at (1,-1) {};
        \node[main_node] (10) at (1.7,0.5) {};
        \node[main_node] (11) at (-2,0) {};

        \path[draw, thick]
        (4) edge node {} (3)
        (4) edge node {} (2)
        (4) edge node {} (1)
        (4) edge node {} (0)
        (4) edge node {} (5)
        (4) edge node {} (6)
        (6) edge node {} (8)
        (6) edge node {} (9)
        (6) edge node {} (7)
        (5) edge node {} (11)
        (6) edge node {} (10);
        \end{scope}

        \begin{scope}[xshift=8cm, yshift=-3.5cm]
        \node[] at (-0.7, 1.5) {$F_6(n)$};
        \node[main_node] (0) at (0,1) {};
        \node[main_node] (1) at (0,-1) {};
        \node[main_node] (2) at (-0.7,-0.5) {};
        \node[main_node] (3) at (-0.7, 0.5) {};
        \node[main_node, fill=black] (4) at (0,0) {};
        \node[above, yshift=2pt, xshift=7pt] at (4) {$u$};
        \node[main_node] (5) at (-1,0) {};
        \node[regular polygon,
        regular polygon sides=3,
        draw,
        inner sep=2pt,
        fill=black] (6) at (1,0) {};
        \node[above, yshift=2pt, xshift=-7pt] at (6) {$v$};
        \node[main_node] (7) at (2,0) {};
        \node[main_node] (8) at (1,1) {};
        \node[main_node] (9) at (1,-1) {};
        \node[main_node] (10) at (3,0) {};
        \node[main_node] (11) at (2,1) {};

        \path[draw, thick]
        (4) edge node {} (3)
        (4) edge node {} (2)
        (4) edge node {} (1)
        (4) edge node {} (0)
        (4) edge node {} (5)
        (4) edge node {} (6)
        (6) edge node {} (8)
        (6) edge node {} (9)
        (6) edge node {} (7)
        (8) edge node {} (11)
        (7) edge node {} (10);

        \end{scope}

        \begin{scope}[xshift=-2cm, yshift=-7cm]
        \node[] at (-0.7, 1.5) {$F_7(n)$};
        \node[main_node] (0) at (0,1) {};
        \node[main_node] (1) at (0,-1) {};
        \node[main_node] (2) at (-0.7,-0.5) {};
        \node[main_node] (3) at (-0.7, 0.5) {};
        \node[main_node, fill=black] (4) at (0,0) {};
        \node[above, yshift=2pt, xshift=7pt] at (4) {$u$};
        \node[main_node] (5) at (-1,0) {};
        \node[regular polygon,
        regular polygon sides=3,
        draw,
        inner sep=2pt,
        fill=black] (6) at (1,0) {};
        \node[above, yshift=2pt, xshift=-7pt] at (6) {$v$};
        \node[main_node] (7) at (2,0) {};
        \node[main_node] (8) at (1,1) {};
        \node[main_node] (9) at (1,-1) {};
        \node[main_node] (10) at (2.7,0.5) {};
        \node[main_node] (11) at (2.7,-0.5) {};

        \path[draw, thick]
        (4) edge node {} (3)
        (4) edge node {} (2)
        (4) edge node {} (1)
        (4) edge node {} (0)
        (4) edge node {} (5)
        (4) edge node {} (6)
        (6) edge node {} (8)
        (6) edge node {} (9)
        (6) edge node {} (7)
        (7) edge node {} (11)
        (7) edge node {} (10);

        \end{scope}

        \begin{scope}[xshift=2.7cm, yshift=-7cm]
        \node[] at (-0.7, 1.5) {$F_8(n)$};
        \node[main_node] (0) at (0,1) {};
        \node[main_node] (1) at (0,-1) {};
        \node[main_node] (2) at (-0.7,-0.5) {};
        \node[main_node] (3) at (-0.7, 0.5) {};
        \node[main_node, fill=black] (4) at (0,0) {};
        \node[above, yshift=2pt, xshift=7pt] at (4) {$u$};
        \node[main_node] (5) at (-1,0) {};
        \node[regular polygon,
        regular polygon sides=3,
        draw,
        inner sep=2pt,
        fill=black] (6) at (1,0) {};
        \node[above, yshift=2pt, xshift=-7pt] at (6) {$v$};
        \node[main_node] (7) at (2,0) {};
        \node[main_node] (8) at (1,1) {};
        \node[main_node] (9) at (1,-1) {};
        \node[main_node] (10) at (3,0) {};
        \node[main_node] (11) at (4,0) {};

        \path[draw, thick]
        (4) edge node {} (3)
        (4) edge node {} (2)
        (4) edge node {} (1)
        (4) edge node {} (0)
        (4) edge node {} (5)
        (4) edge node {} (6)
        (6) edge node {} (8)
        (6) edge node {} (9)
        (6) edge node {} (7)
        (10) edge node {} (11)
        (7) edge node {} (10);

        \end{scope}
        \begin{scope}[xshift=7.4cm, yshift=-6.5cm]
            \node[main_node, fill=black] at (0,0) {};
            \node[right] at (0.3,0) {vertex of degree $\frac{n}{2}$};

            \node[draw, rectangle, fill=black] at (0,-0.6) {};
            \node[right] at (0.3,-0.6) {vertex of degree $\frac{n}{2} - 1$};

            \node[
            regular polygon,
            regular polygon sides=3,
            draw,
            inner sep=2pt,
            fill=black] at (0,-1.2) {};
            \node[right] at (0.3,-1.2) {vertex of degree $\frac{n}{2} - 2$};

        \end{scope}
    \end{tikzpicture}
    \caption{Here we define the family $\mathcal{F}$.  The degrees of the marked vertices are dependent on $n$ according to the legend, while all the other vertices have degree $1, 2$ or $3$ as depicted in the figure.}
    \label{fig: exceptional family}
\end{figure}

\begin{proposition}\label{thm: degree n/2 non-sustainable}
Let $T$ be a tree on $n\geq 12$ vertices with maximum degree
\[
    \Delta(T)=\tfrac n2.
\]
If $T\notin\mathcal F$, where $\mathcal F$ is the family defined in \cref{fig: exceptional family}, then $T$ is non-sustainable.
\end{proposition}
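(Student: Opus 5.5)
We follow the strategy of \cref{thm: large degree is non-sustainable}: show that a leaf edge at a maximum-degree vertex $u$ cannot be sustained. First, as there, a degree count shows that a vertex $u$ of degree $n/2$ has a leaf neighbour unless $T$ is very special. If every neighbour of $u$ had degree at least $2$, then $2(n-1)\ge n/2+2\cdot n/2+(n/2-1)=2n-1$, which is impossible. Fix such a leaf $x$ and the edge $e=ux$. Every replacement of $e$ has the form $T_w=T-ux+xw$ with $w\neq u,x$. Since $T$ is recovered from $T_w$ by rotating the edge $xw$ to $ux$, \cref{lem: ERL} gives $\lambda_1(T)>\lambda_1(T_w)$ as soon as $\Zv_{T_w}(u)\ge \Zv_{T_w}(w)$. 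So it suffices to show that $\Zv_{T_w}(u)\geq \Zv_{T_w}(w)$ for every choice of $w$, whenever $T\notin\mathcal F$.

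In $T_w$ the vertex $u$ has degree $n/2-1=\frac{n-3+1}{2}$. By \cref{thm: distance to perron at most k} with $k=3$, every principal vertex of $T_w$ lies within distance $3$ of $u$, and distance exactly $3$ forces $T_w\cong B(3,n/2-1,n/2-1)$. If $u$ is principal in $T_w$, we are done. Otherwise some principal vertex $p\neq u$ lies at distance at most $3$ from $u$. We then split according to $\deg_{T_w}(p)$ and the position of $w$.

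The first case is when $w$ is not principal. Then $\Zv_{T_w}(w)\le\Zv_{T_w}(u)$ need not follow automatically, so we instead compare $u$ with $w$ directly via \cref{lemma:ratio bound}. We take $Y$ to be the subtree spanned by $N[u]$, $N[w]$ and the $u$--$w$ path, and then use \cref{lemma: monotonicity of ratio} to replace $\lambda_1(T_w)$ by the spectral radius of an auxiliary double broom obtained by rotating everything else onto the appropriate centre, exactly as in the proof of \cref{thm: distance to perron at most k}. This reduces the question to a double broom $B(d,n/2-1,d_2)$, possibly with $\ell$ extra leaves hanging from a leaf next to a centre. That configuration is governed by \cref{lemma:broom center ratio with fidget}: we need $(d_1-d_2)\lambda_1^2\ge \ell(d_1-d_2+1)$. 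The degree budget gives $d_2\le n/2-d+1-\ell$, and $\lambda_1^2\ge n/2-1$, which is large since $n\ge 12$. Together these make the inequality hold except when $d_1-d_2\in\{0,1\}$ and $\ell$ is small.

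The second case is the remaining tight configurations. Here $\deg(w)\in\{n/2-1,n/2-2\}$ (after adding $x$), $w$ is at distance $1$ or $2$ from $u$, and there are at most one or two extra vertices hanging off the structure. \cref{lemma:reduction} lets us replace any extra pendant trees attached at leaves by a single star, which is the worst case. Enumerating these tight configurations should produce exactly the trees $F_1(n),\dots,F_8(n)$: a second centre of degree $n/2-1$ or $n/2-2$ at distance $1$ or $2$, decorated by one or two extra vertices as in \cref{fig: exceptional family}. Excluding $\mathcal F$ leaves only cases where the inequality is strict.

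The main obstacle is this boundary bookkeeping. We must verify that every tree whose leaf-edge replacements allow another vertex to overtake $u$ is isomorphic to some $F_i(n)$, and that the condition $n\ge12$ suffices for the $\lambda_1^2$ estimates. We would first try the crude bound $\lambda_1(T_w)^2\ge n/2-1$ (from the star $K_{1,n/2-1}$ at $u$). If that bound fails in a case, we would sharpen it using the double broom subgraph.
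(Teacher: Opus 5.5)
Your proposal has two genuine gaps: the non-principal case does not go through, and the case analysis that produces $\mathcal F$ is asserted rather than done.

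\textbf{The non-principal case.} You reduce to proving $\Zv_{T_w}(u)\ge \Zv_{T_w}(w)$ for \emph{every} $w$. That is more than is needed, and your argument fails exactly where it goes beyond what is needed, namely when $w$ is not principal in $T_w$. To use \cref{lemma: monotonicity of ratio} you need an auxiliary tree that contains the $u$-side of $Y$ and has spectral radius at least $\lambda_1(T_w)$. Rotating edges onto $w$ via \cref{lem:general-ERL} is legitimate only if $\Zv_{T_w}(w)\ge \Zv_{T_w}(v_i)$ for every vertex $v_i$ that gives up an edge. This fails for the principal vertex $p\neq u,w$. Since $\lambda_1(T_w)\le \deg(p)$ and $\lambda_1(T_w)>2$, $p$ has degree at least $3$, while at most two of its neighbours lie in $Y$. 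So $p$ must give up edges to reach a double broom with centres $u,w$, yet $\Zv(p)>\Zv(w)$. Rotating onto $p$ instead destroys the broom structure, so \cref{lemma:broom center ratio with fidget} no longer applies.

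The missing idea is to use the sustainability hypothesis rather than a pointwise claim. Assume $\lambda_1(T_w)\ge\lambda_1(T)$. Then \cref{lem: ERL} forces $\Zv_{T_w}(w)>\Zv_{T_w}(u)$. If $w$ is not principal, moving $x$ to a principal vertex strictly increases $\lambda_1$. Iterating, you may assume $x$ hangs at a principal vertex $v\neq u$ of $T'$ with $\Zv_{T'}(v)>\Zv_{T'}(u)$ and $\lambda_1(T')\ge\lambda_1(T)$. Principality of $v$ is what makes every later rotation onto $v$ valid. The inequality $\lambda_1(T')\ge\lambda_1(T)\ge\sqrt{n/2}$ is also what the paper uses to get $\lambda^2\ge 6$ at $n=12$.

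\textbf{The case analysis.} Even with $v$ principal, ``rotate everything else onto the centre'' loses too much; the real content is choosing what to keep. A broom $B(d,n/2-1,d_2)$ on $n$ vertices with $\ell$ fidget leaves has $d_2=n/2+2-d-\ell$; your budget is off by one. So if everything outside $N[u]\cup N[v]$ and the path is rotated onto $v$ and $d\in\{1,2\}$, then $d_2\ge n/2>d_1$. \cref{lemma:broom center ratio with fidget} then gives $\Zv(v)>\Zv(u)$, which is no contradiction. Every such configuration would ``fail'', far more than $\mathcal F$. Your description of the fidget inequality is also off. For $d_1=d_2$ it fails for every $\ell\ge 1$. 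For $d_1-d_2=1$ it reads $\lambda^2\ge 2\ell$, so it fails for large $\ell$, not small $\ell$.

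The paper keeps in $Y$ a piece of $v$'s side of calibrated size: exactly $2$ edges not incident to $v$ when $d_{T'}(u,v)=2$, and exactly $3$ when $d_{T'}(u,v)=1$. After rotating the rest, $\deg(v)=n/2-2$. It then uses \cref{lemma:reduction} to bound $\lambda_1$ by the tree in which these edges form a star at one leaf of $v$. This lands in \cref{lemma:broom center ratio with fidget} with $d_1-d_2=1$ and $\ell=2$ or $3$, needing only $\lambda^2\ge 4$ or $\lambda^2\ge 6$.

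When extra vertices sit on $u$'s side, different comparisons are needed:
\begin{itemize}
\item auxiliary trees that are symmetric in $u$ and $v$;
\item the fidget lemma with the roles of $u$ and $v$ exchanged, with $d_1=d_2$ and $\ell=1$;
\item for $d=1$, a second comparison with a balanced double star on $n-2$ vertices.
\end{itemize}
The trees $F_1(n),\dots,F_8(n)$ are exactly the configurations where none of these constructions is available. Your statement that the enumeration ``should produce exactly'' $\mathcal F$ is where the whole proof lies, and it is not carried out.
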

\begin{proof}
    For the sake of contradiction let us assume that there exists a $T$, such that $T\notin\mathcal{F}$ and $T$ is sustainable. Since $\Delta(T)\ge n/2$, it means that at the largest degree vertex $u$, we have a leaf edge. This leaf edge can be replaced to $v$, such that $\lambda_1(T')\ge \lambda_1(T)$. Without loss of generality, we may assume that $\Zv_{T'}(u)<\Zv_{T'}(v)$ and that $\Zv_{T'}(v)$ is maximal in $T'$. Indeed, since otherwise using iteratively \cref{lem: ERL} we get such a $v$ and $T'$. By \cref{thm: distance to perron at most k} we have that $1\le d_{T'}(u,v)\le 3$.

    {\bf Case $d_{T'}(u,v)=3$.} By \cref{thm: distance to perron at most k} we know that $T'$ has to be a balanced double broom with centers $u,v$, i.e. $T'=B(3,n/2-1,n/2-1)$. But then \cref{lemma:broom center ratio with fidget} implies that $\Zv_{T'}(u)=\Zv_{T'}(v)$ contradicting the choice of $v$.

    {\bf Case $d_{T'}(u,v)=2$.} Let $w$ be the middle vertex of the path connecting $u$ and $v$.  Now we will investigate 3 sub-cases. In each case we will argue similarly, first we identify a possible subtree $Y$ containing $u,v$ in $T'$, then we apply \cref{lem:general-ERL} to reconnect every vertex outside of $Y$ directly to $v$. This way we will obtain a new tree $T''$ that has $Y$ as a subgraph, moreover $\lambda_1(T'')\ge \lambda_1(T')$ (with equality if and only if $T'= T''$). Then we combine these with \cref{lemma:ratio bound} and \cref{lemma: monotonicity of ratio} to obtain the contradiction. Since $T\notin\mathcal{F}$ we have the following 3 local neighbourhood around $u, v$.
    \begin{itemize}
        \item If $w$ has a neighbour $z$ in $T'$ other than $u,v$. Then let $Y$ be the tree spanned $N_{T'}[u]\cup \{v,w,z\}$ and $T''$ the tree obtained from $T'$ by applying \cref{lem:general-ERL} on all the edges not contained in $Y$. Then we have
        \[
        1< \frac{\Zv_{T'}(v)}{\Zv_{T'}(u)}\le \frac{\phi_v(Y,\lambda_1(T'))}{\phi_{uv}(Y,\lambda_1(T'))}\le \frac{\phi_v(Y,\lambda_1(T''))}{\phi_{uv}(Y,\lambda_1(T''))}.
        \]
        By the second part of \cref{lemma:ratio bound}, we know that $\frac{\phi_v(Y,\lambda_1(T''))}{\phi_{uv}(Y,\lambda_1(T''))}=\frac{\Zv_{T''}(v)}{\Zv_{T''}(u)}$, that is equal to $1$ by the symmetry of $T''$, thus it is a contradiction.
        \item  There is a vertex $z$ at distance $2$ from $u$ that is not a neighbour  of $w$; say $z$ is adjacent to $y\in N_{T'}(u)$, $y\neq w$. Let $Y$ be the subtree  spanned by $N_{T'}[u]\cup\{z,w,v\}$, and let $T''$ be obtained from $T'$ by  applying \cref{lem:general-ERL} to reconnect every vertex outside $Y$ to $v$.  Then $T''$ is the balanced double broom $B(2,n/2-1,n/2-1)$ with centers $u$  and $v$, together with one extra leaf, namely $z$, attached to the leaf  neighbour $y$ of $u$. As before,
  \[
      1<\frac{\Zv_{T'}(v)}{\Zv_{T'}(u)}
      \le\frac{\phi_v(Y,\lambda_1(T'))}{\phi_{uv}(Y,\lambda_1(T'))}
      \le\frac{\phi_v(Y,\lambda_1(T''))}{\phi_{uv}(Y,\lambda_1(T''))}
      =\frac{\Zv_{T''}(v)}{\Zv_{T''}(u)}.
  \]
  On the other hand, apply \cref{lemma:broom center ratio with fidget} to $T''$  with the roles of $u$ and $v$ exchanged, so that the fidget lies on the side  of $u$. Since $d_1=d_2=n/2-1$ and $\ell=1$, the condition  $(d_1-d_2)\lambda_1(T'')^2\ge\ell(d_1-d_2+1)$ reads $0\ge1$, which is false;  hence $\Zv_{T''}(u)/\Zv_{T''}(v)>1$, that is,  $\Zv_{T''}(v)/\Zv_{T''}(u)<1$, contradicting the  above.

        \item  All the vertices in $N(u)\setminus \{w\}$ are leaves of $T'$. Let us denote by $Y_1$ the tree spanned by $N_{T'}[u]\cup\{v\}$. Since $T\notin \mathcal{F}$, therefore we know that there is a subtree $Y_2$ of $T'-w$ containing $v$ that has exactly 2 edges that is not incident to $v$. Let $Y=Y_1\cup Y_2$ and apply \cref{lem:general-ERL} to reconnect all edges outside of $Y$ to $v$ to obtain $T''$, thus $\lambda_1(T'')\ge \lambda_1(T)$ and the vertex $v$ has degree $n/2-2$. By \cref{lemma:reduction} we have that $\lambda_1(T'')\le \lambda_1(T''_c)$, where $T''_c$ is a tree obtained from $B(2,n/2-1,n/2-2)$ by attaching 2 leaves to a leaf of $v$. Similarly as before we have
        \[
            1<\frac{\Zv_{T'}(v)}{\Zv_{T'}(u)}= \frac{\phi_v(Y,\lambda_1(T'))}{\phi_{uv}(Y,\lambda_1(T'))} \le \frac{\phi_v( Y,\lambda_1(T''_c))}{\phi_{uv}( Y,\lambda_1(T''_c))}= \frac{\Zv_{T''_c}(v)}{\Zv_{T''_c}(u)}.
        \]
        By \cref{lemma:broom center ratio with fidget} we know that $\Zv_{T''_c}(v)/\Zv_{T''_c}(u)\le 1$ if and only if $1\cdot \lambda_1(T''_c)^2\ge 2\cdot 2$, which is true, since $\lambda_1(T''_c)\ge \lambda_1(T)\ge \sqrt{n/2}\ge 2$. Thus again a contradiction.
    \end{itemize}

    \textbf{Case $d_{T'}(u,v)=1$.} In this case we will argue similarly to before.
    \begin{itemize}
    \item By deleting the edge $uv$ in $T'$, the component of $u$  has at least $n/2-2+2$ edges.
    Let $Y$  be the induced subtree of $T'-uv$ with vertices $N_{T'}[u]\cup\{z_1,z_2\}$, where $z_1, z_2\notin N_{T'}[u]$.  Now apply \cref{lem:general-ERL} to connect everyone in the complement of $Y$ directly to $v$. This tree is $T''$ and we know that
    \[
    1<\frac{\phi_v(Y,\lambda_1(T''))}{\phi_{uv}(Y,\lambda_1(T''))}=\frac{\Zv_{T''}(v)}{\Zv_{T''}(u)}.
    \]
    Now let $Y''$ be the subtree induced of $T''$ by the vertices $N_{T''}[v]\cup N_{T''}[u]$. The tree $Y''$ is a balanced double star on $n-2$ vertices and $\lambda_1(T'')\ge\lambda_1(Y'')$. Thus by \cref{lemma:ratio bound} and \cref{lemma: monotonicity of ratio} we have
    \begin{align*}
        \frac{\Zv_{T''}(v)}{\Zv_{T''}(u)}&=\frac{\phi_{uv}(Y'',\lambda_1(T''))}{\phi_{u}(Y'',\lambda_1(T''))}\le \frac{\phi_{uv}(Y'',\lambda_1(Y''))}{\phi_{u}(Y'',\lambda_1(Y''))}=\frac{\Zv_{Y''}(v)}{\Zv_{Y''}(u)}=1.
    \end{align*}
    Thus it's a contradiction.

    \item By deleting the edge $uv$ in $T'$, the component of $u$ is has exactly $n/2-2+1$ edges. Since $T\notin\mathcal{F}$ this means that $v$ and $u$ have a neighbour in $T'-uv$ of degree at least $2$, say $z_1$ (resp. $z_2$). Let $s_1\neq v$ be a neighbour of $z_1$ and $s_2\neq u$ be a neighbour of $z_2$. Let $Y$  be the subtree of $T'$ spanned by $N_{T'}[u]\cup\{z_1,s_1,s_2\}$  and let $T''$ the tree obtained from $T'$ by applying \cref{lem:general-ERL} to reconnect every edge outside of $Y$ to $v$.  Then
    \[
        1<\frac{\Zv_{T'}(v)}{\Zv_{T'}(u)}= \frac{\phi_v(Y,\lambda_1(T'))}{\phi_{uv}(Y,\lambda_1(T'))} \le \frac{\phi_v( Y,\lambda_1(T''))}{\phi_{uv}( Y,\lambda_1(T''))}= \frac{\Zv_{T''}(v)}{\Zv_{T''}(u)}=1,
    \]
    where in the last inequality we used the fact that $T''$ is symmetric. Thus it is a contradiction.
    \item All the vertices in $N(u)\setminus \{v\}$ are leaves of $T'$. Let us denote by $Y_1$ be the tree induced by $N_{T'}[u]\cup\{v\}$ in $T'$.
    Since $T\notin \mathcal{F}$, therefore we know that there is a subtree $Y_2$ of $T'-u$ containing $v$ that has exactly 3 edges  that are not incident to $v$.
    Let $Y=Y_1\cup Y_2$ and apply \cref{lem:general-ERL} to reconnect all edges outside of $Y$ to $v$ to obtain $T''$, thus $\lambda_1(T'')\ge \lambda_1(T)$ and the vertex $v$ has degree $n/2-2$. By \cref{lemma:reduction} we have that $\lambda_1(T'')\le \lambda_1(T''_c)$, where $T''_c$ is a tree obtained from $B(1,n/2-1,n/2-2)$ by attaching 3 leaves to a leaf of $v$. Similarly as before we have
        \[
            1<\frac{\Zv_{T'}(v)}{\Zv_{T'}(u)}= \frac{\phi_v(Y,\lambda_1(T'))}{\phi_{uv}(Y,\lambda_1(T'))} \le \frac{\phi_v( Y,\lambda_1(T''_c))}{\phi_{uv}( Y,\lambda_1(T''_c))}= \frac{\Zv_{T_c''}(v)}{\Zv_{T_c''}(u)}.
        \]
        By \cref{lemma:broom center ratio with fidget} we know that $\Zv_{T_c''}(v)/\Zv_{T_c''}(u)\le 1$ if and only if $1\cdot \lambda_1(T''_c)^2\ge 3\cdot 2$, which is true, since $\lambda_1(T''_c)\ge \lambda_1(T)\ge \sqrt{n/2}\ge \sqrt{6}$. Thus, it's a contradiction.
    \end{itemize}
\end{proof}

In the next statement we characterize all sustainable trees with maximum degree exactly $n/2$. For the proof we need the following lemmas.
\begin{lemma}\label{lemma:cheap_trick}
    Let $T$ be a tree, let $u\in V(T)$, and suppose that $u$ is a principal vertex. If $u$ has $\ell$ leaf neighbours, then
    \[
        \lambda_1(T)+\ell\left(1-\frac{1}{\lambda_1(T)}\right)\leq \deg(u).
    \]
\end{lemma}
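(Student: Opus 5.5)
We will prove the inequality directly from the eigenvalue equation at $u$, normalising the Perron vector so that $\Zv(u)=1$. Since $u$ is principal, this gives $\Zv(w)\le 1$ for every vertex $w$. Write $\lambda=\lambda_1(T)$, let $L\subseteq N(u)$ be the set of the $\ell$ leaf neighbours of $u$, and let $N(u)\setminus L$ be the remaining $\deg(u)-\ell$ neighbours.

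The first step is to evaluate the Perron entry at each leaf neighbour. For $x\in L$, the eigenvalue equation at $x$ reads $\lambda\Zv(x)=\Zv(u)=1$, so $\Zv(x)=1/\lambda$. For $y\in N(u)\setminus L$ we only use the trivial bound $\Zv(y)\le 1$, which comes from principality of $u$.

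The second step is to substitute these into the eigenvalue equation at $u$:
\[
\lambda=\lambda\Zv(u)=\sum_{w\in N(u)}\Zv(w)=\sum_{x\in L}\frac{1}{\lambda}+\sum_{y\in N(u)\setminus L}\Zv(y)\le \frac{\ell}{\lambda}+\deg(u)-\ell .
\]
Rearranging gives $\lambda+\ell\left(1-\frac{1}{\lambda}\right)\le\deg(u)$, which is the claimed inequality.

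There is no genuine obstacle; the only points to check are small ones. The Perron vector is positive, so the normalisation $\Zv(u)=1$ is allowed. The degenerate case $T=K_2$ causes no trouble: there $\lambda=1$ and $\ell=1$, and the inequality reads $1+0\le 1$, which holds.
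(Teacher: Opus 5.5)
Your proposal is correct and follows essentially the same argument as the paper: the eigenvalue equation at each leaf neighbour gives $\Zv(x)=\Zv(u)/\lambda_1$, principality bounds the remaining neighbours by $\Zv(u)$, and the eigenvalue equation at $u$ then rearranges to the inequality. Normalising $\Zv(u)=1$ and checking $K_2$ separately are harmless extras.
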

\begin{proof}
    Let $\Zv$ be the Perron vector of $T$, and write $\lambda_1=\lambda_1(T)$. If $v$ is a leaf neighbour of $u$, then
    \[
        \lambda_1\Zv(v)=\Zv(u),
    \]
    so
    \[
        \Zv(v)=\frac{1}{\lambda_1}\Zv(u).
    \]
    Since $u$ is a principal vertex, every non-leaf neighbour $w$ of $u$ satisfies
    \[
        \Zv(w)\leq \Zv(u).
    \]
    Therefore
    \[
        \lambda_1\Zv(u)
        =
        \sum_{w\sim u}\Zv(w)
        \leq
        \ell\frac{1}{\lambda_1}\Zv(u)+(\deg(u)-\ell)\Zv(u).
    \]
    Dividing by $\Zv(u)>0$ and rearranging gives
    \[
        \lambda_1+\ell\left(1-\frac{1}{\lambda_1}\right)\leq \deg(u).
    \]
\end{proof}

\begin{lemma}\label{lemma:compare}
    Let $T$ be a tree and $u\neq v$ vertices. Let $T_u$ and $T_v$ be trees obtained from $T$ by attaching a leaf to $u$ and $v$.
    If $\phi(T-u,t)>\phi(T-v,t)$ for $t>\lambda_1(T)$, then $\lambda_1(T_u)>\lambda_1(T_v)$.
\end{lemma}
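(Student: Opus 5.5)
We compare the two trees through their characteristic polynomials, using the recurrence of \cref{prop:charpoly-tree-recurrence} at the new leaf. Let $x$ be the new leaf. Deleting $x$ from $T_u$ leaves $T$, and deleting both $x$ and its neighbour $u$ leaves $T-u$. Hence
\[
    \phi(T_u,t)=t\,\phi(T,t)-\phi(T-u,t),
\]
and in the same way
\[
    \phi(T_v,t)=t\,\phi(T,t)-\phi(T-v,t).
\]
Subtracting gives the identity
\[
    \phi(T_u,t)-\phi(T_v,t)=\phi(T-v,t)-\phi(T-u,t),
\]
so the hypothesis says exactly that $\phi(T_u,t)<\phi(T_v,t)$ for every $t>\lambda_1(T)$.

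Next we locate the two spectral radii relative to $\lambda_1(T)$. Since $T$ is a proper subgraph of both $T_u$ and $T_v$, \cref{thm:spectral-radius-monotonicity} gives $\lambda_1(T_u),\lambda_1(T_v)>\lambda_1(T)$. By interlacing, every other eigenvalue of $T_v$ is at most $\lambda_1(T)$. Thus $\lambda_1(T_v)$ is the only root of $\phi(T_v,\cdot)$ in $(\lambda_1(T),\infty)$, and it is simple. Since $\phi(T_v,\cdot)$ is monic, it is negative on $(\lambda_1(T),\lambda_1(T_v))$ and positive on $(\lambda_1(T_v),\infty)$. The same statements hold for $T_u$.

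Now evaluate at $t=\lambda_1(T_v)>\lambda_1(T)$. The hypothesis gives
\[
    \phi(T_u,\lambda_1(T_v))<\phi(T_v,\lambda_1(T_v))=0.
\]
By the sign pattern of $\phi(T_u,\cdot)$ on $(\lambda_1(T),\infty)$, this means $\lambda_1(T_v)$ lies strictly below the largest root of $\phi(T_u,\cdot)$, that is, $\lambda_1(T_v)<\lambda_1(T_u)$.

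The only delicate point is the sign bookkeeping: checking that $\phi(T_u,\cdot)$ has no root in $(\lambda_1(T),\infty)$ other than $\lambda_1(T_u)$, which the interlacing step supplies. The strictness of the hypothesis at $t=\lambda_1(T_v)$ is what yields the strict inequality. Everything else is routine.
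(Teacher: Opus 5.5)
Your proof is correct and follows the paper's argument: the same leaf recurrence gives $\phi(T_v,t)-\phi(T_u,t)=\phi(T-u,t)-\phi(T-v,t)>0$ on $(\lambda_1(T),\infty)$, and interlacing fixes the sign pattern there. The only cosmetic difference is that the paper evaluates at $\lambda_1(T_u)$ to get $\phi(T_v,\lambda_1(T_u))>0$, whereas you evaluate at $\lambda_1(T_v)$ and read off the sign pattern of $\phi(T_u,\cdot)$ instead.
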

\begin{proof}
    First, we prove that
    \[
        \phi(T_u,t)<\phi(T_v,t)
    \]
    for any $t\ge \lambda_1(T)$. This is sufficient, since it implies that $\phi(T_v,\lambda_1(T_u))>0$ and by interlacing we know that $\phi(T_v,t)$ has exactly one change of sign in $(\lambda_1(T),\infty)$ and $\phi(T_v,t)$ is positive as $t\to\infty$. This means that $\lambda_1(T_u)>\lambda_1(T_v)$.

    We have
    \begin{align*}
        \phi(T_u,t)&=t\phi(T,t)-\phi(T-u,t),\\
        \phi(T_v,t)&=t\phi(T,t)-\phi(T-v,t).
    \end{align*}
    Therefore we have
    \[
        \phi(T_v,t)-\phi(T_u,t)=\phi(T-u,t)-\phi(T-v,t)>0.\qedhere
    \]
\end{proof}

\begin{proposition}
    The only sustainable trees with $\Delta=\frac{n}{2}$ are $P_4, D_6,$ the family $F_1(n)$ for $n\geq 8$ vertices and the tree $X$ obtained from $S_3$ and $P_4$ by connecting the degree 3 vertex and one of the degree 2 vertices.
\end{proposition}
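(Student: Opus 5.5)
The plan is to split according to $n$. For $n\geq 12$ I reduce to the finite exceptional family $\mathcal F$ via \cref{thm: degree n/2 non-sustainable}, and for $n\in\{4,6,8,10\}$ I do a direct, finite check. Since $\Delta=n/2$ forces $n$ to be even, these are all the cases.

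For $n\geq 12$, \cref{thm: degree n/2 non-sustainable} says every sustainable $T$ with $\Delta(T)=n/2$ lies in $\mathcal F=\{F_1(n),\dots,F_8(n)\}$. Note that $F_1(n)=B(2,n/2,n/2-1)$, which is sustainable by the remark following \cref{thm: large degree is non-sustainable}. It then remains to show that $F_2(n),\dots,F_8(n)$ are non-sustainable.

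$F_3(n)$ is the double star $B(1,n/2,n/2)$, which is non-sustainable by \cref{ex: double star non-sustainable}. For each remaining $F_i$, let $u$ be the vertex of degree $n/2$; it is principal by \cref{thm:large degree is Perron}. Let $\ell$ be a leaf of $u$. I would show that every tree $T_w=T-u\ell+\ell w$ has $\lambda_1(T_w)<\lambda_1(T)$, in two steps.
\begin{enumerate}[(1)]
\item Shrink the set of candidates $w$. If $w$ is a leaf or lies in a pendant path, a generalized tree shift (\cref{thm: GTS increasing radius}) moves the new leaf toward a branching vertex without decreasing $\lambda_1$. Using \cref{lemma:compare} on the tree $T-\ell$, the choice of $w$ is governed by the values $\phi(T-\ell-w,t)$, so only a bounded number of $w$ need to be compared: $v$, the neighbour of $v$ carrying the extra branch, and the middle vertex on the $u$--$v$ path.
\item For each surviving candidate, show that $u$ is still principal in $T_w$. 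Then $T$ is obtained from $T_w$ by rotating $\ell w$ to $u\ell$, and \cref{lem: ERL} gives $\lambda_1(T)>\lambda_1(T_w)$.
\end{enumerate}

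To certify that $u$ remains principal in step (2), I would use the same device as in \cref{thm: distance to perron at most k}. Take a subtree $Y\ni u,v$, rotate everything outside $Y$ onto the competing vertex, and combine \cref{lemma:ratio bound}, \cref{lemma: monotonicity of ratio} and \cref{lemma:broom center ratio with fidget} (with $\ell\in\{1,2,3\}$) to bound $\Zv(v)/\Zv(u)\leq 1$. The fidget inequality $(d_1-d_2)\lambda_1^2\geq \ell(d_1-d_2+1)$ is easy to verify here, since $d_1-d_2\geq 1$ after the move and $\lambda_1^2\geq n/2-1\geq 5$. Where the ratio argument is not decisive, I would fall back on \cref{lemma:cheap_trick}. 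Applied at $v$, it says $\lambda_1+\ell'(1-1/\lambda_1)\leq\deg(v)$ for the number $\ell'$ of leaves at $v$, and this rules out $v$ being principal once $\lambda_1\approx\sqrt{n/2}$ is plugged in.

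For $n\leq 10$ the trees with $\Delta=n/2$ are few. At $n=4$ this is $P_4$ only, since $\Delta=2$. At $n=6$, with $\Delta=3$, they are $D_6$, $S(1,1,3)$, $S(1,2,2)$ and $B(1,3,3)$. For $n=8,10$ I would enumerate. I would settle each case with \cref{cor: multiple principal vertices is sustainable}, \cref{lem: incident degree 2 is sustainable}, \cref{prop: starlike trees}, and explicit spectral radius comparisons of the finitely many one-edge replacements of the leaf edges at $u$. This should leave exactly $P_4$, $D_6$, $F_1(8)$, $F_1(10)$ and the exceptional tree $X_8$.

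I expect the main obstacle to be the families $F_2,F_6,F_7,F_8$, where $v$ has degree $n/2-2$ and an extra branch of length two. Moving $\ell$ onto the branch vertex can make that vertex nearly competitive with $u$, and for small $n$ it presumably does; this is why $X_8$ survives. For these I would first try an explicit characteristic-polynomial comparison using \cref{prop:charpoly-tree-recurrence} and \cref{lemma:compare}, with the free parameter $n$ kept symbolic. I would then verify positivity of the resulting polynomial difference for $t>\lambda_1$ and $n\geq 12$, and handle $n=8,10$ numerically.
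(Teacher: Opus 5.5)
There is a genuine gap. Your plan proves non-sustainability of every remaining $F_i$ by showing that a leaf edge $u\ell$ at the vertex of degree $n/2$ is non-sustainable. For $F_4(n)$ this cannot work, because that edge is sustainable. Moving $\ell$ from $u$ to $v$ gives a tree isomorphic to $F_5(n)$: now $v$ is the centre of degree $n/2$, and it carries the pendant path. Put $\widehat T=F_4(n)-\ell$ and $p=n/2$. A direct computation gives
\[
\phi(\widehat T-u,t)=t^{n-6}\bigl(t^4-(p-1)t^2+p-3\bigr),\qquad \phi(\widehat T-v,t)=t^{n-6}\bigl(t^4-(p-1)t^2+p-2\bigr).
\]
So $\phi(\widehat T-v,t)-\phi(\widehat T-u,t)=t^{n-6}>0$, and \cref{lemma:compare} gives $\lambda_1(F_4(n)-u\ell+v\ell)>\lambda_1(F_4(n))$. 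Edges away from $u$ are sustainable by \cref{lem:ERLsustainable}, so the only non-sustainable edge of $F_4(n)$ is the central edge $uv$. You need the argument of \cref{prop: double star with pendant is non-sustainable}: generalized tree shifts reduce every replacement of $uv$ to $T_{wv}$, and edge teleportation then shows $T_{wv}$ has smaller radius. This is how the paper disposes of $F_4$ and $F_5$.

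Your step (2) also fails at $w=v$ for $F_2,F_6,F_7,F_8$. After the move both centres have degree $n/2-1$ and $v$ carries the heavier side, so $v$ becomes principal, not $u$. For $F_2$ and $F_7$ the new tree is $B(2,n/2-1,n/2-1)$, respectively $B(1,n/2-1,n/2-1)$, with one, respectively two, leaves attached at a leaf neighbour of $v$. There \cref{lemma:broom center ratio with fidget} with $d_1=d_2$ reads $0\ge\ell$, which is false. So your claim that $d_1-d_2\ge 1$ after the move is wrong, and \cref{lemma:cheap_trick} cannot exclude $v$, since its inequality is satisfied at $v$.

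For these families only a direct spectral comparison can work, and it has to cover all $w$. Invoking \cref{lemma:compare} does not by itself cut the candidates down to a bounded list, since it only compares two given attachment points. The paper organises this as follows:
\begin{enumerate}[(1)]
\item Assume $\lambda_1(T')\ge\lambda_1(T)$ and take $w$ principal in $T'$ without loss of generality. Otherwise rotate the leaf further; if $u$ is principal, rotating it back contradicts the assumption.
\item Use \cref{lemma:cheap_trick} in the opposite direction to yours, ruling out every low-degree $w$ and forcing $w=v$.
\item Compute $\phi(\widehat T-u,t)-\phi(\widehat T-v,t)$, which is $t^{n-6}(t^2-2)$ for $F_2$ and $t^{n-6}(t^2-4)$ for $F_7$.
\end{enumerate}
For $F_6$ and $F_8$ the paper instead shows $uv$ is non-sustainable via \cref{lem: contracting plus adding leaf}. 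A small slip in your small cases: $D_6=S(1,1,3)$, so there are only three trees with $\Delta=3$ on six vertices.
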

\begin{proof}
For $n\leq 12$, the statement was verified by exhaustive enumeration of all trees on $n$ vertices. The enumeration code and output are included in the ancillary files. The sustainable trees found in this range are exactly
\[
    P_4,\quad D_6,\quad X,\quad \text{and }F_1\text{ for }8\leq n\leq 12.
\]
It remains to consider $n\geq 14$.

Any tree not in $\mathcal{F}$ with $n\geq 14$ is non-sustainable by \cref{thm: degree n/2 non-sustainable}. This restricts the set of remaining trees whose sustainability needs to be verified. Note that, since $\Delta=n/2$ for all of these trees in $\mathcal{F}$, we know that the vertex $u$ (as depicted in \cref{fig: exceptional family}) is always the principal vertex and therefore any edge not connected to $u$ is sustainable by \cref{lem:ERLsustainable}.

\textbf{Claim: $F_1(n)$ is sustainable.} Any leaf $w$ connected to $u$ can be replaced by the edge $vw$ to get an isomorphic copy of the original tree, thus all the leaf edges are sustainable. Now let $w$ be the degree 2 vertex connected to $u$. By \cref{lem: incident degree 2 is sustainable}, this edge is sustainable and therefore $F_1(n)$ is sustainable.

\textbf{Claim: $F_3(n)$ is non-sustainable.} This was proven in \cref{ex: double star non-sustainable}.

\textbf{Claim: $F_4(n)$ and $F_5(n)$ are non-sustainable.}
Both of these trees can be obtained from a double star graph $S_{n/2-1,n/2-2}$ by attaching a pendant vertex to one of its leaves. Since $n\geq14$, we have $\frac n2-1\geq3$ and $\frac n2-2\geq3$, so \cref{prop: double star with pendant is non-sustainable} applies. Hence $F_4(n)$ and $F_5(n)$ are non-sustainable.

\textbf{Claim: $F_6(n)$ and $F_8(n)$ are non-sustainable.}
In both cases we show that the edge $uv$ is non-sustainable. Delete $uv$, and first consider a replacement edge $uw$, where $w$ lies in the component containing $v$. Let
$P$ be the path between $v$ and $w$ path in that component. In both $F_6(n)$ and $F_8(n)$ this path lies on one of the pendent arms at $v$.
Let $T'=T-uv+uw$. Clearly $\lambda_1(T')>2$ and $P$ is not a pendent path as it is between $u$ and $v$ of degrees at least 3. Hence by \cref{lem: contracting plus adding leaf}iteratively contracting edges of $P$ and adding leaf to $v$ or the last produced leaf vertex we obtain the tree $T$. Hence $\lambda_1(T)>\lambda_1(T')>\lambda_1(T)$, that is a contradiction.

Finally, after deleting $uv$, every vertex in the component containing $u$ other than $u$ is a leaf adjacent to $u$. If a replacement joins such a leaf $x$ to a vertex $w$ in the other component, a generalized tree shift with beneficiary $u$ transforms $T-uv+xw$ into $T-uv+uw$ and strictly increases the spectral radius. Hence this replacement also has spectral radius strictly smaller than $\lambda_1(T)$. Therefore every replacement of $uv$ decreases the spectral radius, so $F_6(n)$ and $F_8(n)$ are non-sustainable.

\textbf{Claim: $F_2(n)$ is non-sustainable.}
Let $T=F_2(n)$ and let us assume that it is sustainable, in particular every leaf edge at $u$ is sustainable. Let $T'$ be a tree that can be obtained from $T$ by deleting a leaf edge at $u$ and adding a leaf edge to a vertex $w$, such that $\lambda_1(T')\ge \lambda_1(T)$. Without loss of generality we may assume that $w$ maximizes the Perron vector of $T'$ and $w\neq u$.
Since, $\Delta(T')\ge n/2-1$, therefore $\lambda_1(T')>\sqrt{n/2-1}\ge \sqrt{6}$. As the number of leaf edges at $w$ is at least 1, therefore by \cref{lemma:cheap_trick} we have that the  degree of the principal vertex of $T'$ has to be at least
\[
     1+\sqrt{6}-\tfrac{1}{\sqrt{6}}>3,
\]
in particular, it shows that $w=v$, where $v$ is as defined in \cref{fig: exceptional family}.
Now we apply \cref{lemma:compare} on $\widehat T$ obtained from $T$ by deleting a leaf at $u$ and compute the corresponding characteristic polynomials.
\begin{align*}
    \phi(\widehat T-u,t)&=t\phi(\widehat T-u,v,t)-(\tfrac{n}{2}-3)t^{n/2-2+n/2-4}(t^2-1)-t^{n/2-2+n/2-2}\\
    \phi(\widehat T-v,t)&=t\phi(\widehat T-v,u,t)-(\tfrac{n}{2}-1)t^{n/2-2+n/2-4}(t^2-1)
\end{align*}
Therefore,
\[
\phi(\widehat T-u,t)-\phi(\widehat T-v,t)=2t^{n-6}(t^2-1)-t^{n-4}=t^{n-6}(t^2-2)>0
\]
for all $t>\lambda_1(\widehat T)$; indeed $\widehat T$ contains the star $K_{1,n/2-1}$, so $\lambda_1(\widehat T)\ge\sqrt{n/2-1}\ge\sqrt{6}>\sqrt{2}$ because $n\ge 14$. Since $T=\widehat T_u$ and $T'=\widehat T_v$, \cref{lemma:compare} gives $\lambda_1(T)>\lambda_1(T')$, thus contradicting our assumption that $T=F_2(n)$ is sustainable.

\textbf{Claim: $F_7(n)$ is non-sustainable.} As before, for $T=F_7(n)$ we can argue similarly. That is assume that $T$ is sustainable, and we can reconnect a leaf from $u$ to some other vertex $w$, such that it has larger spectral radius and $w$ maximizes the Perron vector. By \cref{lemma:cheap_trick} we can conclude that $w=v$, since otherwise either
\[
 2=d_w\ge 1+\sqrt{6}-\tfrac{1}{\sqrt{6}}>3
\]
or
\[
 4=d_w\ge 3+\sqrt{6}-\tfrac{3}{\sqrt{6}}>4
\]
and both are contradictions. Now let $\widehat T$ be obtained from $T$ by deleting a leaf edge. Then
\begin{align*}
    \phi(\widehat T-u,t)&=t\phi(\widehat T-u,v,t)-(n/2-4)t^{n/2-2+n/2-5}(t^3-2t)-t^{n/2-2+n/2-2}\\
    \phi(\widehat T-v,t)&=t\phi(\widehat T-v,u,t)-(n/2-2)t^{n/2-3+n/2-4}(t^3-2t)
\end{align*}
and
\[
\phi(\widehat T-u,t)-\phi(\widehat T-v,t)=2t^{n-7}(t^3-2t)-t^{n-4}=t^{n-6}(t^2-4)>0
\]
for all $t>\lambda_1(\widehat T)$; indeed $\widehat T$ contains the star
  $K_{1,n/2-1}$, so $\lambda_1(\widehat T)\ge\sqrt{n/2-1}\ge\sqrt{6}>2$ because
  $n\ge 14$. Since $T=\widehat T_u$ and $T'=\widehat T_v$, \cref{lemma:compare}
  gives $\lambda_1(T)>\lambda_1(T')$, thus contradicting our assumption that
  $T=F_7(n)$ is sustainable.
\end{proof}

The main technicality of the previous theorem is due to the lack of stability of the principal vertices under reconnecting leaf edges. Specifically, for trees in $\mathcal{F}$ the principal vertices could be different vertices after we reconnect a non-sustainable leaf edge, while for trees not from $\mathcal{F}$ with $\Delta(T)=n/2$ the principal vertex remains principal as in the proof of Theorem~\ref{thm: large degree is non-sustainable}.

\section{Future directions and open problems}\label{sec:future}

The results in this paper suggest that sustainability is a spectral measure of how robust a tree is under local edge replacement. At one extreme, paths are sustainable: every edge can be replaced without decreasing the spectral radius. At the other extreme, stars are non-sustainable: replacing any leaf edge necessarily decreases the spectral radius. Our results show that this star-like behaviour persists for all trees with maximum degree at least $n/2+1$.

\begin{figure}[htb]
    \centering
    \begin{tikzpicture}[main_node/.style={circle,draw,minimum size=0.8em,inner sep=3pt}]
    \begin{scope}
        \node[] at (0, 1) {$T_1$};
        \node[main_node] (0) at (0,0) {};
        \node[main_node] (1) at (1,0) {};
        \node[above, yshift=2pt, xshift=7pt] at (1) {$v$};
        \node[main_node] (2) at (2,0) {};
        \node[above, yshift=2pt, xshift=7pt] at (2) {$u$};
        \node[main_node] (3) at (3,0) {};
        \node[main_node] (4) at (4,0) {};
        \node[main_node] (5) at (5,0) {};
        \node[main_node] (6) at (6,0) {};
        \node[main_node] (7) at (3,1) {};

         \path[draw, thick]
        (7) edge node {} (3)
        (3) edge node {} (2)
        (2) edge node {} (1)
        (1) edge node {} (0)
        (3) edge node {} (4)
        (4) edge node {} (5)
        (5) edge node {} (6)
        ;
        \draw[->] (6.5,0) -- (8, 0 ) node[midway, sloped, yshift=5pt] {\footnotesize GTS};
    \end{scope}
    \begin{scope}[xshift=8.5cm]
        \node[] at (0, 1) {$T_2$};
        \node[main_node] (0) at (0,0) {};
        \node[main_node] (1) at (1,0) {};
        \node[above, yshift=2pt, xshift=7pt] at (1) {$v$};
        \node[main_node] (2) at (2,0) {};
        \node[above, yshift=2pt, xshift=7pt] at (2) {$u$};
        \node[main_node] (3) at (3,0) {};
        \node[main_node] (4) at (4,0) {};
        \node[main_node] (5) at (5,0) {};
        \node[main_node] (6) at (6,0) {};
        \node[main_node] (7) at (3,1) {};

         \path[draw, thick]
        (7) edge node {} (3)
        (3) edge node {} (2)
        (2) edge node {} (1)
        (3) edge node {} (4)
        (4) edge node {} (5)
        (5) edge node {} (6)
        ;
        \draw[thick] (0) to [bend left=50] (2);
    \end{scope}
    \end{tikzpicture}
    \caption{The tree $T_1$ is non-sustainable by \cref{prop: starlike trees}, whereas $T_2$ is sustainable, which can be verified by computation. The tree $T_2$ can be obtained by a generalized tree shift from $T_1$ with $u$ and $v$ being the beneficiary and candidate, respectively.}
    \label{fig:Non upperset}
\end{figure}

One natural direction is to compare sustainability with Csikvári's partial order on trees \cite{Csikvari2010}, generated by generalized tree shifts. This order has the path as its minimum element and the star as its maximum element. Since every nontrivial generalized tree shift strictly increases the spectral radius, it is natural to ask whether sustainability is monotone in this order. The examples in \cref{fig:Non upperset} show that it is not: a generalized tree shift may transform a non-sustainable tree into a sustainable one. Thus the sustainable trees do not form an order ideal, and the non-sustainable trees do not form an order filter.  We pose the following weaker connectivity question.

\begin{openproblem}\label{op:GTS-connectivity}
    Let $X$ be a sustainable tree on $n$ vertices. Does there exist a sequence of trees
    \[
        X_1,X_2,\dots,X_k
    \]
    such that $X_1\cong P_n$, $X_k\cong X$, each $X_{i+1}$ is obtained from $X_i$ by a single generalized tree shift, and every $X_i$ is sustainable?

    Analogously, let $X$ be a non-sustainable tree on $n$ vertices. Does there exist a sequence of trees
    \[
        X_1,X_2,\dots,X_k
    \]
    such that $X_1\cong X$, $X_k\cong K_{1,n-1}$, each $X_{i+1}$ is obtained from $X_i$ by a single generalized tree shift, and every $X_i$ is non-sustainable?
\end{openproblem}
Equivalently, this problem asks whether the sustainable trees form a connected subposet containing $P_n$, and whether the non-sustainable trees form a connected subposet containing $K_{1,n-1}$, when one restricts the tree-shift order to each class.

\begin{figure}[htb]
    \centering
    \begin{tikzpicture}
    \begin{axis}[
    width=12cm,
    height=7cm,
    xlabel={Number of vertices $n$},
    ylabel={$\alpha_n$},
    xmin=3, xmax=18,
    ymin=0, ymax=1,
    xtick={3,...,18},
    ytick={0,0.2,0.4,0.6,0.662,0.8,1},
    yticklabels={0,0.2,0.4,0.6,0.661,0.8,1},
    grid=both,
    major grid style={gray!30},
    minor grid style={gray!10},
    mark size=2.5pt
    ]

    \addplot[
        blue,
        thick,
        mark=*,
    ] coordinates {
        (3,1)
        (4,0.5)
        (5,0.6666666667)
        (6,0.3333333333)
        (7,0.5454545455)
        (8,0.4782608696)
        (9,0.5319148936)
        (10,0.5)
        (11,0.5234042553)
        (12,0.5190562613)
        (13,0.5311299001)
        (14,0.5324469769)
        (15,0.540240279)
        (16,0.5446169772)
        (17,0.5507207633)
        (18,0.5555717019)
    };

    \addplot[
        dotted,
        domain=3:18,
        samples=2
    ] {0.661};

    \end{axis}
    \end{tikzpicture}
    \caption{Graph of ratios $\alpha_n$ of sustainable trees on $n$ vertices, where $3\le n\le 18$ and the conjectured limit $1-\rho\approx 0.661$.}
    \label{fig:ratio of sustainable}
\end{figure}

 Another question we could ask, how likely a randomly chosen tree to be sustainable? To be precise, let $s_n$ denote the number of sustainable trees on $n$ vertices, counted up to isomorphism, and let $t_n$ denote the total number of trees on $n$ vertices, counted up to isomorphism. Define
\[
\alpha_n=\frac{s_n}{t_n}.
\]
The computations in \cref{fig:ratio of sustainable} suggest that $\alpha_n$ may approach a nontrivial limit. The following upper bound can already be proved.

\begin{proposition}\label{prop:positive-density-nonsustainable}
Let $\rho\approx0.338$ be the radius of convergence appearing in Otter's asymptotic formula \cite{otter1948number} for unlabelled trees. Then
\[
    \limsup_{n\to\infty}\alpha_n\leq 1-\rho^2\approx0.885.
\]
\end{proposition}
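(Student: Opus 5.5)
The plan is a counting argument. By Otter's formula $t_n\sim C\rho^{-n}n^{-5/2}$, so $t_{n-2}/t_n\to\rho^2$. It therefore suffices to construct, for each large $n$, a map $\Phi$ from trees on $n-2$ vertices to non-sustainable trees on $n$ vertices that is injective up to isomorphism, or at least has multiplicity $1+o(1)$. That gives $s_n\le t_n-(1-o(1))t_{n-2}$, and hence $\limsup\alpha_n\le 1-\rho^2$.

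\textbf{Construction.} Given $T_0$ on $n-2$ vertices, fix a principal vertex $u$ of $T_0$, chosen canonically up to automorphism. Let $\Phi(T_0)=T$ be obtained by attaching two new leaves $x,y$ at $u$.

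\textbf{Step 1: $u$ is the unique principal vertex of $T$.} I would argue as in the proof of \cref{thm: distance to perron at most k}. For any other vertex $v$, apply \cref{lemma:ratio bound} to the subtree $Y$ consisting of the component of $T-v$ containing $u$, together with $v$. This gives $\Zv_T(v)/\Zv_T(u)=\phi_v(Y,\lambda_1(T))/\phi_{uv}(Y,\lambda_1(T))$. Then compare with the same quantity for $T_0$ via \cref{lemma: monotonicity of ratio} and the walk-generating-function expansion. Adding leaves at $u$ only adds closed walks at $u$ and raises $\lambda_1$, and both effects should push the ratio strictly below $1$.

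\textbf{Step 2: injectivity.} Step 1 makes $\Phi$ essentially injective. The tree $T$ determines $u$ as its unique principal vertex. It then determines $T_0$ as $T$ minus two leaves at $u$, and all choices of the two leaves give isomorphic trees. Any non-injectivity is confined to the choice of $u$ among principal vertices of $T_0$. Up to a negligible set of $T_0$ this choice is forced, or else non-injectivity only lowers the count by an amount we can absorb.

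\textbf{Step 3: $T$ is non-sustainable via the edge $ux$.} Every replacement has the form $T'=T-ux+xw$ with $w\ne u$. Then $T=T'-xw+ux$. By \cref{lem: ERL}, $\lambda_1(T)>\lambda_1(T')$ as soon as $\Zv_{T'}(u)\ge\Zv_{T'}(w)$. Equivalently, with $\widehat T=T_0+uy$, \cref{lemma:compare} reduces the claim to
\[
\phi(\widehat T-u,t)>\phi(\widehat T-w,t)\qquad\text{for } t>\lambda_1(\widehat T).
\]
I expect this step to be the main obstacle. Adding a leaf at $w$ could in principle make $w$ overtake $u$, as happens in the exceptional family $\mathcal F$. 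I would try to show that $u$ keeps a margin in $\widehat T$, because $u$ is principal in $T_0$ and carries the extra leaf $y$, and that this margin survives a single leaf at $w$. Concretely, I would write the resolvent entries $\phi(\widehat T-v,t)/\phi(\widehat T,t)$ as walk generating functions and compare closed walks at $u$ and at $w$ via \cref{prop:walk-generating-function}.

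\textbf{Fallback.} If this fails for some $T_0$, I would restrict $\Phi$ to those $T_0$ for which it holds, for instance trees whose principal vertex has a leaf neighbour. I would then use the standard fact that almost all trees have such local features, so the restricted domain still has size $(1-o(1))t_{n-2}$, which is all the bound needs.
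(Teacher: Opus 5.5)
\textbf{Step 3 is the gap, and it cannot be closed with your choice of $u$.} Being principal in $T_0$ does not make $u$ the best place to attach further leaves, and $ux$ can be sustainable. Here is a counterexample.
\begin{itemize}
\item Let $A$ be $K_{1,11}$ with centre $u$, where ten of the eleven leaves each receive $89$ pendant leaves. Then $\lambda_1(A)^2=(100+\sqrt{9644})/2\approx 99.10$, and $u$ is principal, since a child $c$ of $u$ has $\Zv(c)/\Zv(u)=1/(\lambda-89/\lambda)<1$.
\item Let $B$ be a vertex $w$ with $97$ pendant leaves and one further neighbour carrying $33$ pendant leaves. Then $\lambda_1(B)^2=(131+\sqrt{4357})/2\approx 98.50$.
\item Join a leaf of $A$ at distance $2$ from $u$ to a leaf neighbour of $w$ by a path of length at least $2$, and call the result $T_0$. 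Then $u$ is the unique principal vertex of $T_0$, and it even has a leaf neighbour.
\item Attaching two more leaves at $u$ raises the $A$-part only to $\lambda^2=(102+\sqrt{9336})/2\approx 99.31$. A Collatz--Wielandt test vector glued from the two Perron vectors shows $\lambda_1(T)<9.97$.
\item However, $T-ux+xw$ contains $B+wx$, and $\lambda_1(B+wx)^2=(132+\sqrt{4488})/2\approx 99.50>9.97^2$. So $ux$ is sustainable.
\end{itemize}
The same example defeats the fallback property you suggest (a leaf at the principal vertex). More generally, the fallback would need a specific, proven ``almost all trees'' statement about Perron vectors, which you do not supply.

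Separately, Step 1 is true, but the mechanism you give runs backwards. With $Y$ on $u$'s side, \cref{lemma: monotonicity of ratio} says that raising $\lambda_1$ \emph{increases} $\Zv(v)/\Zv(u)$, so the two effects compete rather than reinforce. Argue instead with $\Zv(u)/\Zv(v)$ and the subtree on $v$'s side, which the new leaves do not touch.

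\textbf{What is missing is an extremal choice of the attachment vertex.} Among all $v\in V(T_0)$, pick one maximizing $\lambda_1(H_v)$, where $H_v$ is $T_0$ with two new leaves $x,y$ at $v$; this is what the paper does. Let $H'=H_v-vx+xw$.
\begin{itemize}
\item Your first observation in Step 3 handles the case $\Zv_{H'}(v)\ge\Zv_{H'}(w)$.
\item The case $w=y$ is a proper generalized tree shift.
\item If instead $\Zv_{H'}(w)>\Zv_{H'}(v)$, rotate the \emph{other} new leaf edge $vy$ to $wy$. By \cref{lem: ERL} this yields $H_w$ with $\lambda_1(H_w)>\lambda_1(H')\ge\lambda_1(H_v)$, contradicting maximality.
\end{itemize}
No characteristic-polynomial comparison is needed. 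Uniqueness of the principal vertex then follows directly from \cref{lem:ERLsustainable}. Any principal vertex off the edge $vx$ would make $vx$ sustainable, and $\Zv(x)=\Zv(v)/\lambda_1<\Zv(v)$. This gives injectivity and $t_n-s_n\ge t_{n-2}$ for every $n$, with no density argument.
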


\begin{proof}
For $n\ge 4$ let $T$ be a tree on $n-2$ vertices. For $u\in V(T)$, let $T_u^{(2)}$ be obtained from $T$ by attaching two new leaves at $u$, and choose $v\in V(T)$ so that
\[
    \lambda_1\bigl(T_v^{(2)}\bigr)
    =\max_{u\in V(T)}\lambda_1\bigl(T_u^{(2)}\bigr).
\]
Write $H=T_v^{(2)}$, and let $x$ and $y$ be the two new leaves at $v$. We claim that the edge $vx$ is not sustainable.

Suppose otherwise. Then for some $w\neq v$ the tree
\[
    H'=H-vx+xw
\]
satisfies $\lambda_1(H')\geq\lambda_1(H)$.

If $w=y$, then $H$ is obtained from $H'$ by the generalized tree shift with beneficiary $v$ and candidate $y$, moving the edge $yx$ to $vx$. Since $n\geq4$, this shift is proper, and hence \[ \lambda_1(H)>\lambda_1(H'), \] a contradiction. Thus we may assume that $w\neq y$.

Let $\Zv'$ be the Perron vector of $H'$. If $\Zv'(v)\geq\Zv'(w)$, rotating the edge $xw$ back to $vx$ strictly increases the spectral radius, contradicting $\lambda_1(H')\geq\lambda_1(H)$. If $\Zv'(w)>\Zv'(v)$, rotating the other new leaf edge $vy$ to $wy$ produces $T_w^{(2)}$ and strictly increases the spectral radius and we have the chain of inequalities, \[\lambda_1\bigl(T_w^{(2)}\bigr)>\lambda_1(H')\geq\lambda_1(H)=\lambda_1\bigl(T_v^{(2)}\bigr).\] This contradicts the choice of $v$. Hence $vx$ is non-sustainable.

Moreover, $v$ is the unique principal vertex of $H$. Indeed, if a principal vertex were not incident with $vx$, then \cref{lem:ERLsustainable} would imply that $vx$ is sustainable. If $\Zv$ is the Perron vector of $H$, then the leaf $x$ cannot be principal because
\[
    \Zv(x)=\frac{\Zv(v)}{\lambda_1(H)}<\Zv(v).
\]
Consequently, the isomorphism class of $T$ can be recovered from $H$ by locating its unique principal vertex and deleting two of its leaf neighbours. The map $T\mapsto H$ is therefore injective on isomorphism classes, and
\[
    t_n-s_n\geq t_{n-2}.
\]
It follows that
\[
    \alpha_n\leq 1-\frac{t_{n-2}}{t_n}.
\]
Otter's asymptotic formula gives
\[
    t_n\sim c\,n^{-5/2}\rho^{-n}
\]
for a positive constant $c>0$ \cite{otter1948number}.
Hence $t_{n-2}/t_n\to\rho^2$, proving the result.
\end{proof}

\begin{openproblem}\label{op:asymptotic-density}
Does the sequence $(\alpha_n)$ converge and has limit $1-\rho\approx 0.661$? More basically, is
\[
    \liminf_{n\to\infty}\alpha_n>0?
\]
\end{openproblem}

Note that, several arguments in this paper depend on understanding where a leaf should be attached in order to maximize the spectral radius. This suggests the following more general problem.
Let $(Y,y)$ be a rooted tree. For a tree $T$ and a vertex $u\in V(T)$, let $T_u(Y,y)$ be the tree obtained by  identifying $y$ with $u$. One may ask which vertices $u$ maximize
\[
    \lambda_1\bigl(T_u(Y,y)\bigr)
\]
and we call such a vertex a \emph{$(Y,y)$-center} of $T$.
\begin{openproblem}\label{op:rooted attachment centers}
Given a rooted tree $(Y,y)$, give necessary and sufficient conditions for a vertex of a tree $T$ to be the $(Y,y)$-centers of $T$. In particular, how do these centers depend on the Perron vector of $T$ and on the structure of the rooted tree $(Y,y)$?
\end{openproblem}

When $Y$ is a single rooted edge, this asks where one should attach a new leaf in order to maximize the spectral radius. This special case is closely related to the Perron vector of $T$, and it appears naturally when testing whether a pendent edge is sustainable. For larger rooted trees $(Y,y)$, the answer  depends not only on the Perron vector entries of $T$, but also on walk-generating functions rooted at the candidate attachment vertices.

\subsection*{Acknowledgements and AI Usage}

The notion of sustainability, as well as several of the initial results and techniques used here, were introduced in Maarten van Dijk's Master's thesis \cite{VanDijk2025SustainableTrees}.

\paragraph{Use of generative AI.} Most of the research and the initial writing of this manuscript was done before the widespread availability of highly capable LLMs. Generative AI tools were used only at a later stage for proofreading, as well as for assistance with minor \LaTeX{} errors. They were not used to generate mathematical ideas, proofs, results, or any substantive amounts of text.

\providecommand{\doi}[1]{\href{https://doi.org/#1}{\nolinkurl{doi:#1}}}\providecommand{\arxiv}[1]{\href{https://arxiv.org/abs/#1}{#1}}

\end{document}